\documentclass[11pt]{article}
\usepackage[english]{babel}
\usepackage[letterpaper,top=2cm,bottom=2cm,left=3cm,right=3cm,marginparwidth=1.75cm]{geometry}

\usepackage{amssymb,amsmath,accents}
\usepackage{amscd}
\usepackage{amsfonts,amsthm,mathrsfs,mathtools}
\usepackage{enumitem}
\usepackage{setspace}
\usepackage{graphicx, color}
\usepackage{soul}
\usepackage{todonotes}
\usepackage{easyReview}
\numberwithin{equation}{section}
\newtheorem{thm}{Theorem}[section]
\newtheorem{cor}[thm]{Corollary}
\newtheorem{lem}[thm]{Lemma}
\newtheorem{prop}[thm]{Proposition}

\newtheorem{remark}[thm]{Remark}

\title{Asymptotically self-similar graph-like solutions \\ to a multi-dimensional surface diffusion flow equation \\ under contact angle and no-flux boundary conditions\\
\vspace{1em}
\small{In the memory of Professor Louis Nirenberg -- \\
a founding father of modern PDE theory}}
\author{Yoshikazu Giga and Sho Katayama}
\date{}

\begin{document}
\maketitle

\begin{abstract}
This paper studies Mullins' model of thermal grooving which consists of a surface diffusion flow equation with contact angle and no-flux boundary conditions.
 We consider this problem in a multi-dimensional half space and prove that if the slope of the initial data is close to that consistent with the contact angle, then there exists a unique global-in-time solution.
 In particular, we show the existence of a self-similar solution for a given behavior at the space infinity.
 We also show that our global solution converges to a self-similar solution as the time tends to infinity if the initial data is asymptotically homogeneous at the space infinity.
 No assumption on the size of the contact angle is imposed.
\end{abstract}

%%%%%%%%%%%%%%
%\noindent\textbf{AMS subject classifications 2020.}\quad XXXXX, YYYYY

%\keywords{aaa, bbb}
%%%%%%%%%%%%%%

%%%%%%%%%%%%%%%%%%%%%%%%%%%%%%%%%%%%%%%%%%%%%%%%%
\section{Introduction} \label{SI} % Section 1
% 原稿 2025/11/30、1/4
We consider a surface diffusion flow equation of an evolving family $\{\Gamma(t)\}_{t\ge0}$ of hypersurfaces in a half space $\mathbb{R}_+^{N+1}=\mathbb{R}_+^N\times\mathbb{R}$, where
\[
    \mathbb{R}_+^N = \left\{ x=(x_1, \ldots, x_N) \bigm|
    x_N > 0 \right\},
\]
for $N=1,2,\ldots$ i.e., $N\in\mathbb{Z}_{\ge1}$ with a contact angle boundary condition together with the no-flux boundary condition.
 It is of the form
\begin{align}
    V &= -\Delta_\Gamma H &\hspace{-6em}
    &\text{on}\quad
    \Gamma(t) \subset \mathbb{R}_+^{N+1}, \label{E1} \\
    \nu\cdot\mathbf{n} &= \cos \left( \frac{\pi}{2}-\theta\right) &\hspace{-6em}
    &\text{on}\quad
    \bar{\Gamma}(t) \cap \partial\mathbb{R}_+^{N+1}, \label{E2} \\
    \nu\cdot\nabla_\Gamma H &= 0 &\hspace{-6em}
    &\text{on}\quad
    \bar{\Gamma}(t) \cap \partial\mathbb{R}_+^{N+1}, \label{E3}
\end{align}
for $t>0$.
 Here $H$ denotes the ($N$ times) mean curvature of $\Gamma(t)$ in the direction of the unit normal $\mathbf{n}$ of $\Gamma(t)$, and $\Delta_\Gamma=\operatorname{div}_\Gamma\nabla_\Gamma$ denotes the Laplace Beltrami operator, where $\nabla_\Gamma$ denotes the surface gradient while $\operatorname{div}_\Gamma$ denotes the surface divergence;
 see e.g.\ \cite{G06} and \refeq{SA}; $V$ denotes the normal velocity of $\Gamma(t)$ in the direction of $\mathbf{n}$.
 The equation \eqref{E1} is called the surface diffusion flow equation.
 The boundary condition \eqref{E2} is called a contact angle condition with angle $\frac{\pi}{2}-\theta$.
 Here $\theta$ is given constant with $|\theta|<\pi/2$ and $\nu=(0,\ldots,0,-1,0)$ is the outward unit normal of $\partial\mathbb{R}_+^{N+1}$ and $\nu\cdot\mathbf{n}$ denotes the inner product of $\nu$ and $\mathbf{n}$.
 The condition \eqref{E3} is often called the no-flux condition.
 
% 原稿 2025/11/30、2/4
The goal of this paper is to study the large time behavior of a solution when $\Gamma(0)$ is given as the graph of a function $u_0$ on $\mathbb{R}_+^N$ (a graph-like surface) as well as the existence of a unique global-in-time classical solution.
 Our results are summarized as follows.
\begin{enumerate}
\item[(i)] (Global existence)
 If $\nabla u_0=(D_1 u_0,\ldots,D_N u_0)$ is close to $(\tan\theta)e_N$ with $e_N=(0,\ldots,0,1)$ (at least in $L^\infty$ sense), there exists a unique global-in-time (still graph-like) solution
\[
    \Gamma(t) = \left\{ \left(x,u(x,t) \right) \mid
    x \in \mathbb{R}_+^N \right\}
\]
to \eqref{E1}--\eqref{E3} such that $\nabla u$ is still close to $(\tan\theta)e_N$.
 (We do not assume that $\tan\theta$ is small, unlike \cite{AG14} and \cite{AK25}.)
\item[(i\hspace{-0.1em}i)] (Self-similar solution)
 Assume that $u_0$ is homogeneous in the sense that $u_0(\sigma x)=\sigma u_0(x)$ for all $x\in\mathbb{R}_+^N$ and $\sigma>0$ and close to $(\tan\theta)e_N$.
 Then, problem \eqref{E1}, \eqref{E2} possesses a unique self-similar solution
\[
    \Gamma(t) = \left\{ \left(x,u(x,t) \right) \mid
    x \in \mathbb{R}_+^N \right\},
\]
i.e., a solution satisfying $u^\sigma=u$ in $\mathbb{R}_+^{N+1}\times(0,\infty)$ for all $\sigma>0$, where $u^\sigma(x,t)=\sigma^{-1/4}u(\sigma^{1/4}x,\sigma t)$, such that $\nabla u$ is close to $(\tan\theta)e_N$ with $u|_{t=0}=u_0$.
 Note that $u=(\tan\theta)e_N$ is always a trivial self-similar solution.
 Depending on the behavior at space infinity (characterized by $u_0$), there exist many nontrivial self-similar solutions.
 For example, if $\tan\theta$ is not zero but small, we may take $u_0\equiv0$ to get a non-trivial (spatially bounded) self-similar solution.
\item[(i\hspace{-0.1em}i\hspace{-0.1em}i)] (Stability)
 If initial data is close to a homogeneous function at the space infinity, the solution to \eqref{E1}--\eqref{E3} converges to a self-similar solution as $t\to\infty$.
\end{enumerate}

The global existence result (i) is known when $\Gamma(t)$ is a curve, i.e., $N=1$, by T.~Asai and Y.~Kohsaka \cite{AK25}, where $\tan\theta$ is assumed to be small and $u_0$ is bounded.
 This implies (i\hspace{-0.1em}i) for small $\tan\theta$ by taking $u_0\equiv0$.
% We do not impose smallness of $\tan\theta$.
 If the no-flux condition \eqref{E3} is linearized around $u_{xx}=0$ for $N=1$, results similar to
% 原稿 2025/11/30、3/4
 (i)--(i\hspace{-0.1em}i\hspace{-0.1em}i) were proved by T.~Asai and the first author \cite{AG14}, when $\tan\theta$ is small and $u_0$ is at least bounded.
 A general strategy to construct a self-similar solution used in these papers and also in the present paper is to solve the problem globally in time for homogeneous initial data $u_0$ i.e., $u_0$ satisfies $u_0^\sigma(x)=u_0(x)$ for all $\sigma>0$.
 This idea was first introduced by the first author and T.~Miyakawa \cite{GM89} for the Navier--Stokes equations and developed for many other equations by T.~Cazenave and F.~B.~Weissler \cite{CW98}.
 For the surface diffusion flow equations, a graph-like self-similar solution on the whole space $\mathbb{R}^N$ was first constructed by H.~Koch and T.~Lamm \cite{KL12}.
 The stability (i\hspace{-0.1em}i\hspace{-0.1em}i) of a self-similar solution was already noted in \cite{AG14}.
 However, in order to prove the uniform convergence in $\mathbb{R}_+^N$, i.e., convergence in $L^\infty(\mathbb{R}_+^N)$, one needs to control behavior near space infinity of $u^\sigma$ uniformly for $\sigma>1$, for example, by showing the equi-decay property of $u^\sigma$, as studied in a book \cite{GGS10} by the first author with M.-H.~Giga and J.~Saal for the vorticity equations.
 For a graph-like self-similar solution on $\mathbb{R}^N$ to the surface diffusion flow equation, the stability in $L^\infty(\mathbb{R}^N)$ sense was established by H.~Du and N.~K.~Yip \cite{DY23} by controlling behavior at the space infinity.
 In our present paper, we do not touch stability in $L^\infty(\mathbb{R}_+^N)$.
 We also note that
% 原稿 2025/11/30、4/4
it is not trivial to say that if $\Gamma(0)$ is graph-like, so is $\Gamma(t)$.
 In fact, for the surface diffusion flow equation in the whole $\mathbb{R}^1$, a graph-like solution may lose the graph-like property in a finite time as shown by C.~M.~Elliott and S.~Maier--Paape \cite{EM01}.

% 原稿 2025/12/9、1追-1/4
To explain our strategy, let us write down the equations \eqref{E1}--\eqref{E3} when $\Gamma(t)$ is given as the graph of a function $u=u(x,t)$, i.e., 
\[
    \Gamma(t) = \left\{ \left(x,u(x,t) \right) \mid
    x \in \mathbb{R}_+^N \right\}.
\]
The details are given in \refeq{SA} for the reader's convenience.
 Let $\mathbf{n}$ be the upward unit normal vector field of $\Gamma(t)$.
 Its explicit form is
\[
    \mathbf{n} = \frac{(-\nabla u,1)}{\omega}, \quad
    \omega = \sqrt{1 + |\nabla u|^2}.
\]
Then the contact angle condition \eqref{E2} can be written as
\[
    D_N u - \omega \sin \theta = 0.
\]
If we use the tangential gradient $\nabla'=(D_1,\ldots,D_{N-1})$, then this can be rewritten as
\begin{equation} \label{EG2}
    D_N u - \gamma \sqrt{1+|\nabla' u|^2} = 0, \quad
    \gamma = \tan \theta.
\end{equation}
We next observe that
\[
    \nabla_\Gamma H = \left( P(\nabla u) \nabla H, \frac{\nabla u \cdot \nabla H}{\omega^2} \right), \quad
    P(\nabla u) = I_N - \frac{\nabla u\otimes\nabla u}{1+|\nabla u|^2},
\]
where $I_N$ denotes the $N\times N$ identity matrix; see Proposition~\ref{PA2}.
 Then the no-flux condition \eqref{E3} can be written as
\begin{equation} \label{EG3}
    \left( P(\nabla u) \nabla H \right)_N = 0
    \quad\text{or}\quad
     \left( \omega P(\nabla u) \nabla H \right)_N = 0,
\end{equation}
where $(w)_N$ denotes the $N$th component of a vector $w$.
 Since the upward normal velocity $V$ equals
\[
    V = \frac{u_t}{\omega}, \quad
    u_t = \frac{\partial u}{\partial t},
\]
the equation \eqref{E1} can be written as
\[
    \frac{u_t}{\omega} + \frac1\omega \operatorname{div}\left( \omega P(\nabla u) \nabla H \right) = 0
\]
or
\begin{equation} \label{EG1}
    u_t + \operatorname{div} \left( \omega P(\nabla u) \nabla H \right) = 0
\end{equation}
if we notice that $\Delta_\Gamma f=\omega^{-1}\operatorname{div}\left(\omega P(\nabla u)\nabla f\right)$; see Proposition~\ref{PA3}.
 The mean curvature $H$ can be written as
\[
    H = \operatorname{tr} \left( \frac{P(\nabla u) \nabla^2 u}{\omega} \right)
    = \operatorname{div} \left(\frac{\nabla u}{\omega} \right),
\]
where $\operatorname{tr}$ denotes the trace; see Proposition~\ref{PA4}.
% 原稿 2025/12/9、1追-2/4
 Gathering \eqref{EG2}--\eqref{EG1}, we now obtain equations for $u$ equivalent to \eqref{E1}--\eqref{E3}.
 Its explicit form is
\begin{align}
    u_t + \operatorname{div} \left( \sqrt{1+|\nabla u|^2} P(\nabla u)
    \nabla \operatorname{tr}\left( \frac{P(\nabla u)}{\sqrt{1+|\nabla u|^2}} \nabla^2 u\right) \right)
    &= 0 \quad\text{in}\quad
    \mathbb{R}_+^N \times (0,T), \label{EU1} \\
    D_N u - \gamma \left( 1+|\nabla'u|^2 \right)^{1/2}
    &= 0 \quad\text{in}\quad
    \mathbb{R}_+^N \times (0,T), \label{EU2} \\
    \left( \sqrt{1+|\nabla u|^2} P(\nabla u)
    \nabla \operatorname{tr} \left( \frac{P(\nabla u)}{\sqrt{1+|\nabla u|^2}} \nabla^2 u \right) \right)_N
    &= 0 \quad\text{in}\quad
    \partial\mathbb{R}_+^N \times (0,T). \label{EU3}
\end{align}
The corresponding stationary problem is a typical elliptic boundary value problem as remarked in \refeq{SA}.
 Since its resolvent problem satisfies necessary ellipticity conditions (see \cite{Ag}, \cite{Ta}, \cite{DHP03}, \cite{PS}), it is possible to solve this problem at least locally in time if one starts from some $L^p$-type ($1<p<\infty$) Besov initial data.
 However, since a homogeneous function may not belong to such a space, one cannot apply this theory to constructing self-similar solutions.
 We would rather work on $L^\infty$ type spaces so that they accommodate homogeneous functions.
 The choice of function spaces is similar to that of \cite{AK25}, \cite{AG14}.
 However, our way to handle the no-flux boundary condition is quite different from \cite{AK25}, even for the case $N=1$.

To see our idea, let us consider the one-dimensional case.
 The problem \eqref{EU1}--\eqref{EU3} can be rewritten as
\begin{align*}
    u_t + \left( \frac1{(1+u_x^2)^{1/2}} \kappa_x \right)_x
    &= 0, \quad x>0, \quad t \in (0,T), \\
    u_x(0,\cdot) &= \tan\theta, \quad t \in (0,T), \\
    \frac1{(1+u_x^2)^{1/2}} \kappa_x (0,\cdot) &= 0, \quad t \in (0,T),
\end{align*}
where $\kappa=H=u_{xx}/(1+u_x^2)^{3/2}$ and $f_x=\partial f/\partial x$.
 For a given $\gamma_0=\tan\theta_0\in\mathbb{R}$,
% 原稿 2025/12/9、1追-3/4
we consider $v=u-\gamma_0x$.
 Then $v$ must satisfy
\begin{align}
    v_t + (\beta v_{xxx} - G)_x
    &= 0, \quad x>0, \quad t \in (0,T), \label{E1D1} \\
    v_x(0,\cdot) &= \tan\theta - \tan\theta_0, \quad t \in (0,T),  \label{E1D2} \\
    (\beta v_{xxx} - G) (0,\cdot) &= 0, \quad t \in (0,T), \label{E1D3}
\end{align}
where $\beta=1/(1+\gamma_0^2)^2$ and $G$ is a remainder term which is small if $v$ is small in $C^3$ sense.
 Note that the remainder terms in the first and the third equations are the same.
 We take this advantage to handle the nonlinear term $G$ while the paper \cite{AK25} did not use this special structure so the proof is rather long.
 If $\beta>0$ is a given constant, we actually have a simple solution formula for a given $G$ for \eqref{E1D1}--\eqref{E1D3}. 
 It is of the form
\[
    v(t) = ax + e^{-\beta t (\partial_x^4)_n} (v_0-ax)
    + \int_0^t \partial_x e^{-\beta(t-s)(\partial_x^4)_d} G(s)\, ds,
\]
where $v|_{t=0}=v_0$ and $a=\tan\theta-\tan\theta_0$; $(\partial_x^4)_n$ denotes the biharmonic operator with the ``Neumann" boundary condition $v_x=v_{xxx}=0$ while $(\partial_x^4)_d$ denotes the biharmonic operator with the ``Dirichlet" condition $v=v_{xx}=0$.
 This is easy to observe.
 Indeed, we may assume $a=0$ and $v_0=0$ by subtracting
\[
    ax+e^{-\beta t(\partial_x^4)_n} (v_0-ax)
\]
from $v$.
 We notice that
\[
    w = \int_0^t e^{-\beta(t-s)(\partial_x^4)_d} G(s)\,ds
\]
solves
\begin{align*}
    w_t + \beta w_{xxxx} - G &= 0, \quad x>0, \quad t>0, \\
    w &= 0, \quad x=0, \quad t>0, \\
    w_{xx} &= 0, \quad x=0, \quad t>0, \\
    w &= 0, \quad x>0, \quad t=0.
\end{align*}
It is easy to see that $v=w_x$ solves \eqref{E1D1}, \eqref{E1D2} with $a=\tan\theta-\tan\theta_0=0$ and $v|_{t=0}=0$.
 To check \eqref{E1D3}, we observe that $w=0$ at $x=0$ implies $w_t=0$.
 Thus
\[
    \beta v_{xxx} - G = \beta w_{xxxx} - G
    = -w_t = 0 \quad\text{at}\quad x=0,
\]
which is \eqref{E1D3}.
% 原稿 2025/12/9、1追-4/4
 Using this observation, we linearize \eqref{E1D1}--\eqref{E1D3} around zero and construct a solution in H\"older space.
 The basic idea is the same as in our paper \cite{GGK25}.
 We use classical estimates in \cite{So65} rather than Sobolev-type estimates used in \cite{KL12}.
 We further give a time-decay rate of $v$ by using scaled H\"older spaces introduced in \cite{GGK25}.

Since we are working in a multi-dimensional setting, there needs to be a control of tangential derivations $\nabla' u$.
 Although technically involved, the procedure itself is standard.

% 修正指示 2026/1/25、1/3
Compared with our results and results in \cite{AK25} for $N=1$, there are several differences.
 First of all their function spaces are different although both
% 修正指示 2026/1/25、2/3 
use H\"older spaces with weights in time.
 However, the solution in \cite{AK25} is constructed in some weighted H\"older spaces where differentiability in time is not guaranteed.
 Their solution is a mild solution which is a weak solution.
 Since our solution is smooth, the uniqueness in \cite{AK25} implies that their solution is also smooth.

The boundary problem of the surface diffusion flow equation has been mainly studied in one-dimensional setting, i.e., $N=1$, except the work of M.~G\"o\ss wein \cite{Go19}.
 Under the right angle boundary conditions, stability of part of straight lines or circles (which are equilibria) is studied in H.~Garcke, K.~Ito and Y.~Kohsaka \cite{GIK05}, \cite{GIK08}, M.~Gazwani and J.~McCoy \cite{GM}, and G.~Wheeler and V.-M.~Wheeler \cite{WW}.
% 修正指示 2026/1/25、3/3
Other than self-similar solutions constructed in \cite{AK25}, several special solutions, such as traveling wave solutions are constructed.
 For example, T.~Kagaya and Y.~Kohsaka \cite{KK} constructed non-convex traveling waves with two end points on $\mathbb{R}^1\times\{0\}$ with given contact angle and no-flux condition.
 They showed that there is a chance that the curve may not be in the half plane $\mathbb{R}^1\times\mathbb{R}_+^+$ even if the curve is in the half plane near end points.
 The existence of a non-compact, complete translating solution (soliton) was proved by W.~J.~Ogden and M.~Warren \cite{OW}.
 In both studies, the problem is reduced to analysis of ordinary differential equations of the function associated with the Gauss map.

This paper is organized as follows.
 In Section~\ref{SB}, we prepare function spaces and state our main results.
 In Section~\ref{SINT}, we derive a convenient integral form of our problem with layer and volume potentials.
 In Section~\ref{SL}, we prepare estimates of layer and volume potentials.
 In Section~\ref{SN}, we give necessary estimates for nonlinear terms.
 The proofs of the main results are given in Section~\ref{SP}.
 In \ref{SA}, we discuss some ellipticity conditions to derive a priori estimates in H\"older a nd $L^p$ spaces.
 Note that Schauder estimates (estimates in H\"older spaces) for general elliptic problems were established by S.~Agmon, A.~Douglis and L.~Nirenberg \cite{ADN59}, \cite{ADN64}.
 In \ref{SR}, % Appendix B
 we give necessary calculations to derive \eqref{EU1} from \eqref{E1}.
 
%%%%%%%%%%%%%%%%%%%%%%%%%%%%%%%%%%%%%%%%%%%%%%%%%
% 原稿 2025/12/16、2-1/5
\section{Function spaces and main results} \label{SB} % Section 2

As in \cite{GGK25}, we introduce scaled H\"older norms.
 Let $f$ be a (possibly vector or tensor valued) continuous function on $\overline{\mathbb{R}_+^N}\times(a,b)$.
 For $\lambda,\mu\in(0,1)$ we define H\"older seminorms
of $f$ in $\overline{\mathbb{R}_+^N}\times(a,b)$ by
\begin{gather*}
    \lfloor f]_{C_x^\lambda\left(\overline{\mathbb{R}_+^N}\times(a,b)\right)}
    := \sup \left\{ \frac{\left|f(x,t)-f(y,t)\right|}{|x-y|^\lambda} \biggm|
    x,y \in \overline{\mathbb{R}_+^N},\ x\neq y,\ t\in(a,b)\right\}, \\
    [f]_{C_t^\mu\left(\overline{\mathbb{R}_+^N} \times (a,b)\right)}
    := \sup \left\{ \frac{\left|f(x,s)-f(x,t)\right|}{|t-s|^\mu} \biggm|
    s,t \in(a,b),\ s\neq t,\ x\in\overline{\mathbb{R}_+^N} \right\}.
\end{gather*}
We set
\begin{align*}
    [f]_{C_x^0\left(\overline{\mathbb{R}_+^N}\times(a,b)\right)}
    &= [f]_{C_t^0\left(\overline{\mathbb{R}_+^N}\times(a,b)\right)}
    := \|f\|_{L^\infty\left(\mathbb{R}_+^N\times(a,b)\right)} \\
    &= \sup \left\{ \left|f(x,t)\right| \bigm|
    x\in\mathbb{R}_+^N,\ t\in(a,b) \right\}.
\end{align*}
For $\lambda\ge0$ and $n\in\mathbb{Z}_{\ge1}$, we next define space-time H\"older seminorms by
\begin{multline*}
    [f]_{C^{\lambda,\lambda/M}\left(\overline{\mathbb{R}_+^N}\times(a,b)\right)}
    := \sum_{\substack{\ell,m\in\mathbb{Z}_{\ge0}\\
    \ell+Mm=\lfloor\lambda\rfloor}}[\nabla^\ell \partial_t^m f]_{C_x^{\lambda-\lfloor\lambda\rfloor}\left(\overline{\mathbb{R}_+^N}\times(a,b)\right)} \\
    + \sum_{\substack{\ell,m\in\mathbb{Z}_{\ge0}\\
    \lambda-M <\ell+Mm\le\lambda}}[\nabla^\ell \partial_t^m f]_{C_t^{(\lambda-\ell-Mm)/M}\left(\overline{\mathbb{R}_+^N}\times(a,b)\right)}.
\end{multline*}
Here $\nabla^\ell h$ denotes an $\ell$-tensor ($\mathbb{R}^{N^\ell}$-valued function) consisting of all $\ell$th spatial partial derivatives of $h$ and $\lfloor\lambda\rfloor$ denotes the largest integer less than $\lambda$.
 For $\lambda\ge0$ and $M\in\mathbb{Z}_{\ge0}$, we define H\"older norms by
\[
    \|f\|_{BC^{\lambda,\lambda/M}\left(\overline{\mathbb{R}_+^N}\times(a,b)\right)}
    := \sum_{\substack{\ell,m\in\mathbb{Z}_{\ge0}\\
    \ell+Mm\le\lambda}}\|\nabla^\ell \partial_t^m f\|_{L^\infty\left(\mathbb{R}_+^N\times(a,b)\right)}
    + [f]_{C^{\lambda,\lambda/M}\left(\overline{\mathbb{R}_+^N}\times(a,b)\right)}.
\]
% 原稿 2025/12/16、2-2/5
For a bounded interval $(a,b)$, it is convenient to introduce a scaled H\"older norm $\|f\|_{BC^{\lambda,\lambda/M}\left(\overline{\mathbb{R}_+^N}\times(a,b)\right)}$ by
\begin{align*}
    \|f\|'_{BC^{\lambda,\lambda/M}\left(\overline{\mathbb{R}_+^N}\times(a,b)\right)}
    := &\sum_{\substack{\ell,m\in\mathbb{Z}_{\ge0}\\
    \ell+Mm\le\lambda
    }} (b-a)^{\ell/M+m}
    \| \nabla^\ell \partial_t^m f\|_{L^\infty\left(\mathbb{R}_+^N\times(a,b)\right)} \\
    &+ (b-a)^{\lambda/M} [f]_{C^{\lambda,\lambda/M} \left(\overline{\mathbb{R}_+^N}\times(a,b)\right)}.
\end{align*}
Scaled H\"older norms have the scaling invariant property
\begin{equation} \label{SIP}
    \|\bar{f}\|'_{BC^{\lambda,\lambda/M}\left(\overline{\mathbb{R}_+^N}\times(0,1)\right)}
    = \|f\|_{BC^{\lambda,\lambda/M}\left(\overline{\mathbb{R}_+^N}\times(a,b)\right)},
\end{equation}
where
\[
    \bar{f}(y,s):=f\left((b-a)^{1/M}y,\ a+(b-a)s\right), \quad
    (y,s) \in \overline{\mathbb{R}_+^N} \times(0,1).
\]

We now introduce a few useful function spaces to handle equation \eqref{EU1} by taking $M=4$.
 For $\alpha\in\mathbb{R}$, $k\in\mathbb{Z}_{\ge0}$ and $\mu\in[0,1)$, we set
\[
    Z_\alpha^{k+\mu}
    := \left\{ v \in C \left(\overline{\mathbb{R}_+^N}\times(0,\infty)\right) \Bigm|
    \|v\|_{Z_\alpha^{k+\mu}} < \infty \right\}
\]
with
\[
    \|v\|_{Z_\alpha^{k+\mu}}
    := \sup_{t>0} t^{-\alpha/4}
    \|v\|'_{BC^{k+\mu,(k+\mu)/4}\left(\mathbb{R}_+^N\times(t/2,t)\right)}.
\]
Since we mainly study gradient of a solution $v$ to \eqref{EU1}, we introduce
\[
    \hat{Z}_\alpha^{k+\mu}
    := \left\{ v \in C \left(\overline{\mathbb{R}_+^N}\times(0,\infty)\right) \Bigm|
    \nabla v \in Z_\alpha^{k+\mu}\right\}.
\]
We shall write unweighted spaces simply by
\[
    Z^{k+\mu}
    := Z_0^{k+\mu}, \quad
    \hat{Z}^{k+\mu}
    := \hat{Z}_0^{k+\mu}.
\]
Similar spaces for $\mathbb{R}^{N^k}$-valued functions on $\mathbb{R}_+^N\times(0,\infty)$ and functions on $\partial\mathbb{R}_+^N\times(0,\infty)$ are defined and denoted by the same notations unless there arises confusion.

% 原稿 2025/12/16、2-3/5
For a closed set $K$ in $\mathbb{R}^N$, we denote by $\operatorname{Lip}(K)$ the space of all Lipschitz continuous functions in $K$.

We are now in a position to state our main results.
 We begin with global solvability of \eqref{EU1}--\eqref{EU3} with given initial data.
\begin{thm}\label{TM}
Let $\mu\in(0,1)$ and $\gamma_0\in\mathbb{R}$.
 Then there exist constants $\varepsilon^*>0$ and $\delta^*>0$ such that if $\gamma=\tan\theta\in\mathbb{R}$ and $u_0\in \operatorname{Lip}(\overline{\mathbb{R}^N_+})$ satisfies
\[
|\gamma-\gamma_0|<\varepsilon^*
\quad\text{and}\quad
\|\nabla u_0-\gamma_0 e_N\|_{L^\infty(\mathbb{R}^N_+)}<\varepsilon^*,
\]
then the problem \eqref{EU1}--\eqref{EU3} with $T=\infty$ admits a unique solution $u$ with initial data $u_0$ in the set
\begin{equation} \label{Esm}
      \hat{B}_{\delta^*} :=\left\{v\in\hat{Z}^{4+\mu} \Bigm|
      \|\nabla v-\gamma_0e_N\|_{Z^{4+\mu}}\le\delta^*\right\},
\end{equation}
where $e_N=(0,\ldots,0,1)\in\mathbb{R}^N$.
 Moreover, $u$ belongs to $C^\infty \left (\overline{\mathbb{R}^N_+}\times(0,\infty) \right)\cap C\left (\overline{\mathbb{R}^N_+}\times[0,\infty) \right)$ and satisfies
\begin{gather}
    \sup_{x\in\mathbb{R}^N_+} \left|\nabla^\ell \partial_t^m u(x,t) \right|
    \le Ct^{(1-\ell-4m)/4}, \label{Ede} \\
    \sup_{x\in\mathbb{R}^N_+} \left| (u-u_0)(x,t) \right| \le Ct^{1/4} \label{ER}
\end{gather}
for all $(\ell,m)\in\mathbb{Z}_{\ge 0}^2\setminus\left\{(0,0)\right\}$ and $t>0$ with some constant $C$ depending only on $N$, $\gamma$, $\varepsilon^*$, $\delta^*$, $\ell$, $m$.
\end{thm}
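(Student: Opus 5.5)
We will prove Theorem~\ref{TM} by a fixed point argument for the gradient-level unknown, modeled on the one-dimensional computation in the introduction. Set $v=u-\gamma_0x_N$. Then \eqref{EU1}--\eqref{EU3} take the form
\[
v_t+\operatorname{div}\bigl(A(\nabla v)\nabla\Delta_{\gamma_0}v-G\bigr)=0,
\]
with boundary conditions
\[
D_Nv-\gamma\sqrt{1+|\nabla'v|^2}+\gamma_0=0
\quad\text{and}\quad
\bigl(A\nabla\Delta_{\gamma_0}v-G\bigr)_N=0 .
\]
Here the leading part is a constant-coefficient fourth-order operator $L_{\gamma_0}$, namely the linearization of \eqref{EU1} at $\nabla u=\gamma_0e_N$. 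The remainder $G=G(\nabla v,\nabla^2v,\nabla^3v)$ is at least quadratic in the deviation $\nabla v$ together with its derivatives. We rely on the same structural feature as in \eqref{E1D1}--\eqref{E1D3}: the flux appearing in the equation and the flux appearing in the no-flux condition coincide.

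\textbf{Step 1: integral formulation.} Following Section~\ref{SINT}, we write $v$ as the sum of three pieces:
\begin{enumerate}
\item[(a)] a "Neumann" semigroup part applied to the initial data, with the constant boundary datum $\gamma-\gamma_0$ subtracted off by an explicit linear function (the analogue of $ax$ in one dimension);
\item[(b)] a boundary layer potential carrying the nonlinear tangential part $\gamma(\sqrt{1+|\nabla'v|^2}-1)$;
\item[(c)] the term $\operatorname{div}\int_0^t e^{-(t-s)L_d}G(s)\,ds$, where $L_d$ is the "Dirichlet" realization.
\end{enumerate}
The one-dimensional argument "$w=0$ on the boundary implies $w_t=0$" shows that piece (c) automatically satisfies the no-flux condition. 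Since $u_0$ is only Lipschitz, we measure everything through $\nabla v$, so we work in $\hat Z^{4+\mu}$.

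\textbf{Step 2: linear estimates.} Using the potential estimates of Section~\ref{SL}, which are based on Solonnikov's kernels \cite{So65}, we prove the following bounds:
\begin{enumerate}
\item[(i)] $\|\nabla(\text{semigroup part})\|_{Z^{4+\mu}}\le C(\|\nabla u_0-\gamma_0e_N\|_{L^\infty}+|\gamma-\gamma_0|)$;
\item[(ii)] a bound on the layer potential in $Z^{4+\mu}$ by the boundary datum in the corresponding boundary space $Z^{3+\mu}$;
\item[(iii)] a bound on $\nabla$ of the volume term in $Z^{4+\mu}$ by $\|G\|_{Z^{3+\mu}_{-?}}$, with the time weights chosen so that $\nabla^3 v\sim t^{-1/2}$ fits.
\end{enumerate}
Because the norms are scale invariant, see \eqref{SIP}, each estimate reduces to one on the window $(1/2,1)$, with the history on $(0,t/2)$ handled through kernel decay.

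\textbf{Step 3: nonlinear estimates and contraction.} Section~\ref{SN} should give the quadratic bounds
\[
\|G(v)\|\le C\|\nabla v\|_{Z^{4+\mu}}^2,
\qquad
\|G(v)-G(w)\|\le C\bigl(\|\nabla v\|+\|\nabla w\|\bigr)\|\nabla(v-w)\|,
\]
and similarly for the boundary nonlinearity. These hold uniformly in the size of $\gamma_0$: $G$ is smooth in $\nabla v$ near $\gamma_0e_N$, and only the deviation needs to be small. We then take $\delta^*$ small and $\varepsilon^*\ll\delta^*$, which makes the map a contraction on $\hat B_{\delta^*}$. This yields existence, and uniqueness within $\hat B_{\delta^*}$.

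\textbf{Step 4: smoothness and decay.} Since the solution is a fixed point in $Z^{4+\mu}$, the derivative bounds \eqref{Ede} follow from the definition of the norm for $\ell+4m\le 5$. For higher regularity we bootstrap with Schauder estimates on the rescaled windows, invoking \cite{ADN64} and the ellipticity discussion in \ref{SA}; this gives $C^\infty$ and \eqref{Ede} for all $(\ell,m)$. Estimate \eqref{ER} follows by integrating $|u_t|\le Cs^{-3/4}$ over $(0,t)$, and continuity up to $t=0$ follows from the same bound.

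\textbf{Main obstacle.} We expect the hardest step to be the estimates in Step 2, specifically the volume-potential term $\nabla\operatorname{div}\int e^{-(t-s)L_d}G\,ds$ near $s=t$ together with the boundary layer potential. Both must be controlled in the scaled H\"older norm at top order $4+\mu$ using only $G\in Z^{3+\mu}$. This requires parabolic Schauder-type bounds for the half-space Dirichlet and Neumann biharmonic-type kernels of a non-isotropic constant-coefficient operator, since $\gamma_0$ is not small. Our first approach will be a tangential Fourier transform combined with an explicit reflection-type kernel representation.
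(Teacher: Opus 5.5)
Your architecture is the paper's: a contraction for $w=\nabla v$ in $Z^{4+\mu}$, built from the Neumann semigroup, a layer potential for the contact-angle datum, and a divergence-form volume potential that exploits the common flux in \eqref{EU1} and \eqref{EU3}. This is followed by Schauder estimates on rescaled windows and a bootstrap. Subtracting $(\gamma-\gamma_0)x_N$, instead of feeding the constant $C[0]$ into the layer potential as the paper does, is a harmless variant.

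\textbf{First gap: the realization in (c).} When $N\ge2$ you cannot use the Dirichlet realization for every component of $G$. For $k<N$, the function $W_k=\int_0^te^{-(t-s)L_d^2}G_k\,ds$ is odd in $x_N$, so $D_NW_k$ and $D_NLW_k$ do not vanish on $\partial\mathbb{R}^N_+$ in general. The tangential part $\sum_{k<N}D_kW_k$ of the divergence therefore adds $\sum_{k<N}D_k(D_NW_k)$ to $D_Nv$ and $\sum_{k<N}D_k(D_NLW_k)$ to $D_NLv$, which destroys both boundary conditions. The paper uses the mixed realization $e^{-tL_v^2}$ instead. It takes Neumann for $G_1,\dots,G_{N-1}$; their contributions then vanish on the boundary, together with their tangential derivatives. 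It takes Dirichlet only for $G_N$, where your one-dimensional argument gives $D_N^2W_N=0$ and $\frac{1}{1+\gamma_0^2}D_NL(D_NW_N)=L^2W_N=G_N-\partial_tW_N=G_N$ on $\partial\mathbb{R}^N_+$.

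\textbf{Second gap: the derivative count in (iii) and Step 3.} Your count is off by one. The remainder contains $\nabla^3v=\nabla^2w$, which for $w\in Z^{4+\mu}$ lies only in $Z^{2+\mu}_{-2}$. Hence $G\notin Z^{3+\mu}$ in general, the quadratic bound of Step 3 in a $Z^{3+\mu}$-type norm is false, and the map does not close. What is needed, and true, is the full four-derivative gain $V_v:Z^{2+\mu}_{-2}\to Z^{6+\mu}_{2}$ (Lemma~\ref{LLV} with $k=2$, $\alpha=-2>-4$). This puts $\nabla\operatorname{div}V_vG$ in $Z^{4+\mu}$ exactly at the borderline, and the quasilinear top-order term is absorbed only through the smallness of its coefficient $\mathbb{A}[w]$.

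\textbf{The layer potential in (ii).} Here the error goes the other way. $\nabla Sh$ has exactly the regularity of $h$, since $S:Z^{k+\mu}_\alpha\to Z^{k+1+\mu}_{\alpha+1}$ (Lemma~\ref{LLS}). So you need $h\in Z^{4+\mu}$, not $Z^{3+\mu}$. This holds because $h$ is a smooth function of $\nabla'v\in Z^{4+\mu}$.

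\textbf{Your ``main obstacle'' is lighter than you expect.} $L$ is diagonal, so it commutes with $x_N\mapsto-x_N$ and the half-space kernels are $b(x-y)\pm b(x-y^*)$. The identity $Sh=-D_NLV_d\bar h$, with $\bar h(x,t)=h(x',t)$, reduces the layer potential to a volume potential. The volume potential is handled by kernel decay on $(0,\tilde t)$ with $\tilde t\le t/4$, plus Solonnikov's Schauder estimate on the rescaled interval $(\tilde t,t)$. Your split at $t/2$ gives no uniform kernel decay on the window $(t/2,t)$, so the gap before the window matters. No tangential Fourier analysis is needed.

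\textbf{Step 4.} The integral equation has no smoothing at top order, so the bootstrap must go through the variable-coefficient linearized problem. The paper first differentiates only tangentially, obtaining $w_i$ for $i<N$ from \eqref{ET1}--\eqref{ET3}. It then recovers $w_N$ from the boundary relations \eqref{EN2}--\eqref{EN3}. Also, the case $(\ell,m)=(0,1)$ of \eqref{Ede} cannot be read off from $\|\nabla v\|_{Z^{4+\mu}}$; you need the equation or \eqref{EDL}.
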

As corollaries of Theorem~\ref{TM}, we obtain the existence and stability of non-trivial self-similar solutions close to the trivial ones $\gamma_0x$.
 We call a function $u$ on $\mathbb{R}_+^N\times(0,\infty)$ is a self-similar solution to \eqref{EU1}--\eqref{EU3} if $u$ satisfies \eqref{EU1}--\eqref{EU3} with $T=\infty$ and self-similar, i.e., $u^\sigma=u$ in $\mathbb{R}_+^N\times(0,\infty)$ for all $\sigma>0$, where
\[
    u^\sigma(x,t) := \sigma^{-1/4} u(\sigma^{1/4}x, \sigma t).
\]
Let $S^{N-1}$ be the unit sphere in $\mathbb{R}^N$ centered at the origin, i.e.,
\[
    S^{N-1} = \left\{ x \in \mathbb{R}^N \bigm|
    |x| = 1 \right\}.
\]
% 原稿 2025/12/16、2-4/5
 Let $S_+^{N-1}$ denote a semi-sphere defined by $S_+^{N-1}=S^{N-1}\cap\mathbb{R}_+^N$.
 For a function $\psi$ defined on $\overline{S_+^{N-1}}$, let $\bar{\psi}$ denote its $1$-homogeneous extension to $\overline{\mathbb{R}_+^N
 }$, i.e., $\bar{\psi}(x)\equiv|x|\psi\left(x/|x|\right)$.
\begin{cor} \label{CSS}
Let $\mu\in(0,1)$ and $\gamma_0\in\mathbb{R}$.
 Then there exist constants $\varepsilon^*>0$ and $\delta^*>0$ such that if $\gamma=\tan\theta\in\mathbb{R}$ and $\psi\in\operatorname{Lip}(\overline{S_+^{N-1}})$ satisfies
\begin{equation} \label{ECS}
    |\gamma-\gamma_0| < \varepsilon^*
    \quad\text{and}\quad
    \|\nabla\bar{\psi} - \gamma_0 e_N\|_{L^\infty(\mathbb{R}_+^N)} < \varepsilon^*,
\end{equation}
then there exists a unique self-similar solution
\[
    u^{\gamma,\psi} \in C^\infty \left(\overline{\mathbb{R}_+^N}\times (0,\infty) \right)
    \cap C \left(\overline{\mathbb{R}_+^N} \times [0,\infty) \right)
\]
to \eqref{EU1}--\eqref{EU3} satisfying
\begin{equation} \label{EPRo}
    \lim_{r\to\infty} \frac{u^{\gamma,\psi}(r\theta,1)}{r} = \psi(\theta)
    \quad\text{for each}\quad
    \theta \in S_+^{N-1}
\end{equation}
in the set $\hat{B}_{\delta^*}$, where $\hat{B}_{\delta^*}$ is as in Theorem~\ref{TM}.
 Moreover $u^{\gamma,\psi}(x,0)=\bar{\psi}(x)$.
\end{cor}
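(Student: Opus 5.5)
We apply Theorem~\ref{TM} with initial datum $u_0=\bar{\psi}$ and then use uniqueness together with the scaling invariance of the problem. The plan has four steps.

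\emph{Step 1: admissibility of the data.} First we check that $\bar{\psi}\in\operatorname{Lip}(\overline{\mathbb{R}_+^N})$. Since $\psi$ is Lipschitz on $\overline{S_+^{N-1}}$, its $1$-homogeneous extension is Lipschitz, with $\nabla\bar{\psi}$ bounded and $0$-homogeneous. The smallness assumption \eqref{ECS} is exactly the hypothesis of Theorem~\ref{TM}. We take $\varepsilon^*,\delta^*$ to be those constants and obtain a unique solution $u\in\hat{B}_{\delta^*}$ with $u|_{t=0}=\bar{\psi}$. We set $u^{\gamma,\psi}:=u$. Smoothness and continuity up to $t=0$ are inherited from the theorem, which also gives $u^{\gamma,\psi}(x,0)=\bar\psi(x)$.

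\emph{Step 2: self-similarity.} Fix $\sigma>0$ and consider $u^\sigma(x,t)=\sigma^{-1/4}u(\sigma^{1/4}x,\sigma t)$.
\begin{itemize}
\item Equation \eqref{EU1} is invariant under this scaling, since each term is homogeneous of the same degree: $u_t$ and the fourth-order operator in $\nabla u$, $\nabla^2u,\ldots$ both scale like $\sigma^{3/4}$ when $\nabla u$ is invariant.
\item Conditions \eqref{EU2} and \eqref{EU3} are invariant, because $\nabla u^\sigma(x,t)=(\nabla u)(\sigma^{1/4}x,\sigma t)$ and the boundary $\partial\mathbb{R}_+^N$ is dilation invariant.
\item The initial datum is preserved: $u^\sigma(x,0)=\sigma^{-1/4}\bar\psi(\sigma^{1/4}x)=\bar\psi(x)$ by $1$-homogeneity.
\end{itemize}
The key check is that $u^\sigma\in\hat{B}_{\delta^*}$. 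This is where the scaled norms matter. The map $t\mapsto\sigma t$ sends intervals $(t/2,t)$ to $(\sigma t/2,\sigma t)$, and by the scaling property \eqref{SIP} the norm $\|\cdot\|'_{BC^{4+\mu,(4+\mu)/4}(\mathbb{R}_+^N\times(t/2,t))}$ of $\nabla u^\sigma-\gamma_0e_N$ equals the corresponding norm of $\nabla u-\gamma_0e_N$ on $(\sigma t/2,\sigma t)$. Each derivative $\nabla^\ell\partial_t^m$ picks up $\sigma^{\ell/4+m}$, which is compensated by the factor $(b-a)^{\ell/4+m}$. Taking the supremum over $t$ gives $\|\nabla u^\sigma-\gamma_0e_N\|_{Z^{4+\mu}}=\|\nabla u-\gamma_0e_N\|_{Z^{4+\mu}}\le\delta^*$. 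Uniqueness in Theorem~\ref{TM} then yields $u^\sigma=u$ for every $\sigma>0$.

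\emph{Step 3: asymptotics \eqref{EPRo}.} By self-similarity with $\sigma=r^4$,
\[
\frac{u(r\theta,1)}{r}=u(\theta,r^{-4}).
\]
As $r\to\infty$ we have $r^{-4}\to0$. Continuity of $u$ up to $t=0$, or more quantitatively \eqref{ER}, which gives $|u(\theta,s)-\bar\psi(\theta)|\le Cs^{1/4}$, shows the limit is $\bar\psi(\theta)=\psi(\theta)$. In fact the convergence is uniform, at rate $C/r$.

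\emph{Step 4: uniqueness.} Let $\tilde u\in\hat{B}_{\delta^*}$ be another self-similar solution satisfying \eqref{EPRo}. For fixed $x$, self-similarity gives $\tilde u(x,t)=t^{1/4}\tilde u(t^{-1/4}x,1)$. Write $x=|x|\theta$ and $r=t^{-1/4}|x|$. Then
\[
t^{1/4}\,\tilde u(r\theta,1)=|x|\,\frac{\tilde u(r\theta,1)}{r}\to|x|\psi(\theta)=\bar\psi(x)\quad\text{as } t\to0.
\]
So $\tilde u$ has initial datum $\bar\psi$, in the sense that it attains its initial value in the class where the theorem's uniqueness applies. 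Hence $\tilde u=u$.

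\emph{Main obstacle.} The subtle point is this last step: ensuring that "initial data $u_0$" in Theorem~\ref{TM} is meant in the pointwise-limit sense available here. Since $\nabla\tilde u$ is bounded, $\tilde u(\cdot,t)$ is equi-Lipschitz in $x$, so pointwise convergence upgrades to locally uniform convergence. If the theorem requires more, namely continuity on $\overline{\mathbb{R}_+^N}\times[0,\infty)$, this Lipschitz bound combined with the rate estimate supplies it.
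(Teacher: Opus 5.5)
Your proposal is correct and follows the paper's proof. You apply Theorem~\ref{TM} with $u_0=\bar\psi$, use the scale invariance of \eqref{EU1}--\eqref{EU3} and of $\hat B_{\delta^*}$ together with uniqueness to get self-similarity, and obtain \eqref{EPRo} from $u(r\theta,1)=r\,u(\theta,r^{-4})$ and \eqref{ER}. Your Step~4 usefully spells out the converse that the paper only asserts: any self-similar solution in $\hat B_{\delta^*}$ satisfying \eqref{EPRo} has initial trace $\bar\psi$, via $\tilde u(x,t)=t^{1/4}\tilde u(t^{-1/4}x,1)$, so the uniqueness in Theorem~\ref{TM} applies.
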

\begin{remark} \label{RB}
If we take $\gamma_0=0$ and $\psi=0$, then self-similar solution $u^{\gamma,\psi}$ is bounded by \eqref{ER}.
 If $\gamma=\tan\theta$ is small but $\gamma\neq0$, then Corollary~\ref{CSS} implies that there exists a non-trivial bounded self-similar solution.
 This result has a strong contrast to the case of the whole space.
 In \cite{RW} it is shown that there exists no bounded self-similar solution in $\mathbb{R}$ other than zero;
 see also \cite[Theorem 7]{GK}.
\end{remark}
% 原稿 2025/12/16、2-5/5
\begin{proof}[Proof of Corollary~\ref{CSS} admitting Theorem~\ref{TM}]
This easily follows from the unique existence Theorem~\ref{TM} since it turns out that self-similar solutions are solutions of \eqref{EU1}--\eqref{EU3} with initial data $u_0=\bar{\psi}$, which is $1$-homogeneous.
 Indeed, let $\varepsilon^*$ and $\hat{B}_{\delta^*}$ be as in Theorem~\ref{TM}.
 Then the problem \eqref{EU1}--\eqref{EU3} with initial data $u_0=\bar{\psi}$ admits a unique solution $u^{\gamma,\psi}$ in $\hat{B}_{\delta^*}$,
 Moreover by the scale invariance of \eqref{EU1}--\eqref{EU3}, $(u^{\gamma,\psi})^\sigma$ is also a solution to the problem \eqref{EU1}--\eqref{EU3}.
 Furthermore, by the definition of the norm of $Z^{4+\mu}$, the set $\hat{B}_{\delta^*}$ is invariant under the scaling $u\mapsto u^\sigma$.
 Thus the uniqueness of a solution implies that $u^{\gamma,\psi}\equiv(u^{\gamma,\psi})^\sigma$ so $u^{\gamma,\psi}$ is self-similar.
 Assertion of \eqref{EPRo} is a consequence of relations $u^{\gamma,\psi}(r\theta,1)=ru^{\gamma,\psi}(\theta,r^{-4})$ and \eqref{ER}.
\end{proof}
\begin{cor} \label{CST}
Assume the same hypothesis as in Theorem~\ref{TM} concerning $\gamma$, $\gamma_0$, $\mu$ and $u_0$.
 Assume $\psi\in\operatorname{Lip}(\overline{S_+^{N-1}})$ satisfies \eqref{ECS}.
 Assume moreover that
\begin{equation} \label{EAsy}
    \lim_{r\to\infty} \frac{u_0(r\theta)}{r} = \psi(\theta)
    \quad\text{for}\quad
    \theta \in S_+^{N-1}.
\end{equation}
Then the solution $u\in\hat{B}_{\delta^*}$ to \eqref{EU1}--\eqref{EU3} with $u|_{t=0}=u_0$ satisfies
\[
    u^\sigma \to u^{\gamma,\psi}
    \quad\text{as}\quad
    \sigma \to \infty
\]
uniformly in any compact set of $\overline{\mathbb{R}_+^N}\times[0,\infty)$.
 The convergence is also uniform for all its derivatives in $\overline{\mathbb{R}_+^N}\times(0,\infty)$.
\end{cor}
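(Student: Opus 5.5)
The plan is the standard compactness-plus-uniqueness argument for stability of self-similar solutions, adapted to the scaled Hölder spaces $Z^{4+\mu}$.

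\textbf{Step 1 (rescaled family).} For $\sigma>0$, the rescaled solution $u^\sigma(x,t)=\sigma^{-1/4}u(\sigma^{1/4}x,\sigma t)$ solves \eqref{EU1}--\eqref{EU3}, by scale invariance, with initial data
\[
u_0^\sigma(x)=\sigma^{-1/4}u_0(\sigma^{1/4}x).
\]
The set $\hat{B}_{\delta^*}$ is invariant under $u\mapsto u^\sigma$. Also $\nabla u_0^\sigma(x)=\nabla u_0(\sigma^{1/4}x)$, so $\|\nabla u_0^\sigma-\gamma_0e_N\|_{L^\infty}<\varepsilon^*$. Hence $u^\sigma$ is exactly the unique solution of Theorem~\ref{TM} with data $u_0^\sigma$. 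Its constants in \eqref{Ede} and \eqref{ER} depend only on $N,\gamma,\varepsilon^*,\delta^*,\ell,m$, so they are uniform in $\sigma$.

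\textbf{Step 2 (convergence of the data).} By \eqref{EAsy}, for $x=r\theta$ we have
\[
u_0^\sigma(x)=|x|\,\frac{u_0(\sigma^{1/4}|x|\theta)}{\sigma^{1/4}|x|}\to|x|\psi(\theta)=\bar\psi(x)
\]
pointwise. The functions $u_0^\sigma$ are uniformly Lipschitz, with constant at most $|\gamma_0|+\varepsilon^*$. They are also locally bounded, at least after subtracting $u_0(0)\sigma^{-1/4}\to0$. By Arzel\`a--Ascoli, pointwise convergence upgrades to locally uniform convergence on $\overline{\mathbb{R}^N_+}$.

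\textbf{Step 3 (compactness).} By \eqref{Ede}, on each set $\overline{\mathbb{R}^N_+}\times[\tau,T]$ with $\tau>0$, all derivatives $\nabla^\ell\partial_t^m u^\sigma$ with $(\ell,m)\neq(0,0)$ are bounded uniformly in $\sigma$. The values $u^\sigma$ themselves are locally bounded, since
\[
|u^\sigma(x,t)-u_0^\sigma(x)|\le Ct^{1/4}
\]
by \eqref{ER}, and $u_0^\sigma$ is locally bounded. This gives equicontinuity at $t=0$ and bounds on compacts.

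By Arzel\`a--Ascoli and a diagonal argument, every sequence $\sigma_j\to\infty$ has a subsequence along which $u^{\sigma_j}$ converges
\begin{itemize}
\item uniformly on compact subsets of $\overline{\mathbb{R}^N_+}\times[0,\infty)$, and
\item in $C^k$ on compact subsets of $\overline{\mathbb{R}^N_+}\times(0,\infty)$ for every $k$.
\end{itemize}
Call the limit $w$. Passing to the limit in \eqref{EU1}--\eqref{EU3} shows that $w$ is a classical solution with $w(\cdot,0)=\bar\psi$. The last point uses Step 2 together with the uniform $t^{1/4}$ bound.

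\textbf{Step 4 (identification of the limit).} The main obstacle is to show $w\in\hat{B}_{\delta^*}$, since that space carries sup-over-$t$ scaled Hölder norms on all of $\overline{\mathbb{R}^N_+}$, while the convergence is only local. The plan is lower semicontinuity of the norm. For each fixed $t$, the quantity $\|\nabla w-\gamma_0e_N\|'_{BC^{4+\mu,(4+\mu)/4}(\mathbb{R}^N_+\times(t/2,t))}$ is a supremum of difference quotients and pointwise values. Each such quotient is a limit of the corresponding quotient for $u^{\sigma_j}$, which is bounded by $\delta^*$. Taking suprema gives $\|\nabla w-\gamma_0e_N\|_{Z^{4+\mu}}\le\delta^*$, so $w\in\hat B_{\delta^*}$.

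One subtlety is that the top-order terms require convergence of $\nabla^5\partial_t^m$ derivatives. These are locally uniform by Step 3, since all higher derivatives are uniformly bounded on $t\ge\tau$. Hence the pointwise passage to the limit is legitimate.

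By the uniqueness part of Corollary~\ref{CSS}, equivalently of Theorem~\ref{TM} with data $\bar\psi$, we get $w=u^{\gamma,\psi}$. Since the limit is independent of the subsequence, the whole family satisfies $u^\sigma\to u^{\gamma,\psi}$ as $\sigma\to\infty$, in both of the senses stated in Corollary~\ref{CST}.
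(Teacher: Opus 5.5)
Your proposal is correct and follows essentially the same route as the paper. Both arguments use uniform-in-$\sigma$ bounds from \eqref{Ede}--\eqref{ER}, then Arzel\`a--Ascoli with a diagonal argument, identification of the initial trace via \eqref{EAsy}, and uniqueness in $\hat{B}_{\delta^*}$ to get convergence of the whole family. The paper obtains the uniform bounds by directly rescaling the estimates for $u$ rather than reapplying Theorem~\ref{TM} to $u_0^\sigma$, and it only asserts the two steps you justify: the lower semicontinuity giving $w\in\hat{B}_{\delta^*}$, and the locally uniform convergence $u_0^\sigma\to\bar\psi$.
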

This also follows from Theorem~\ref{TM}.
 We shall postpone its proof in Section~\ref{SP}.

%%%%%%%%%%%%%%%%%%%%%%%%%%%%%%%%%%%%%%%%%%%%%%%%%
\section{Integral equations} \label{SINT} % Section 3

% 原稿 2025/12/17、3-1/3
We shall derive an integral equation for a difference $v=u-\gamma_0e_N$.

For a real-valued function $v$ on $\mathbb{R}_+^N$, we set
\begin{equation} \label{EL1}
    Lv := \operatorname{tr} \left( P(\gamma_0 e_N) \nabla^2 v \right)
    =\operatorname{div} \left( P(\gamma_0 e_N) \nabla v \right),
\end{equation}
By definition $P(\gamma_0 e_N)$ is an $N\times N$ diagonal matrix of the form
\[
    P(\gamma_0 e_N) = \operatorname{diag} \left( 1,\ldots,1,\frac{1}{1+\gamma_0^2} \right).
\]
Thus
\[
    L = \sum_{i=1}^{N-1} D_i^2
    + \frac{1}{1+\gamma_0^2} D_N^2.
\]
To write the top order term in \eqref{EU1}, for a $\mathbb{R}^N$-valued function $w$ on $\mathbb{R}_+^N$, we introduce
\begin{equation} \label{EA1}
    \left(\mathbb{A}[w]T \right)_\ell
    :=\sum_{1\le i,j,k\le N}a_{ijk\ell}(w)T_{ijk}
    \quad\text{(}1\le\ell\le N\text{)}
\end{equation}
with
\[
    a_{ijk\ell}(w)
    := P(\gamma_0 e_N+w)_{ij} P(\gamma_0 e_N+w)_{k\ell}
    - P(\gamma_0 e_N)_{ij} P(\gamma_0 e_N)_{k\ell},
\]
for $1\le i,j,k,\ell\le N$, where $T$ is $3$-tensor in $\mathbb{R}^N$.
 We further introduce
\begin{align*}
    B[w] &:=\frac{1}{\sqrt{1+|\gamma_0 e_N+\nabla w|^2}} P(\gamma_0 e_N+w)\nabla w, \quad
    w: \mathbb{R}_+^N \to \mathbb{R}^N, \\
    C[p] &:=\frac{1}{1+\gamma_0^2}
    \left( \gamma\sqrt{1+|p'|^2} - \gamma_0 \right), \quad
    p=(p',p_N)\in \mathbb{R}^N, \\
    p' &= (p_1,\ldots, p_{N-1}) \in \mathbb{R}^{N-1}, \\
% 原稿 2025/12/17、3-2/3
    F[w] &:= \left( 2\left( B[w]\right)^2 + \operatorname{tr} \left( B[w] \right) B[w] \right) w, \quad
    w: \mathbb{R}_+^N \to \mathbb{R}^N.
\end{align*}
\begin{lem} \label{LEP}
For $v=u-\gamma_0 e_N$, \eqref{EU1} is
\begin{equation} \label{EV1}
    v_t + L^2 v + \operatorname{div} \left(\mathbb{A}[\nabla v] \nabla^3 v
    - F[\nabla v] \right) = 0.
\end{equation}
The condition \eqref{EU2} is of the form
\begin{equation} \label{EV2}
    \frac{1}{1+\gamma_0^2} D_N v
    - C[\nabla v] = 0.
\end{equation}
The condition \eqref{EU3} is of the form
\begin{equation}\label{EV3}
    \frac{1}{1+\gamma_0^2}D_N Lv
    + \left(\mathbb{A}[\nabla v] \nabla^3 v
    - F[\nabla v] \right)_N = 0.
\end{equation}
\end{lem}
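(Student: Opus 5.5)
The proof is a direct computation. I set $p:=\nabla u=\gamma_0e_N+\nabla v$, $\omega=\sqrt{1+|p|^2}$, $P=P(p)$ and $P_0=P(\gamma_0e_N)$. Here $\gamma_0e_N$ in $v=u-\gamma_0e_N$ is read as the linear function $\gamma_0x_N$, so $\nabla u=\gamma_0e_N+\nabla v$ and $\nabla^2u=\nabla^2v$.

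\textbf{The boundary condition \eqref{EV2}.} Since $\nabla'u=\nabla'v$ and $D_Nu=\gamma_0+D_Nv$, condition \eqref{EU2} reads $D_Nv=\gamma\sqrt{1+|\nabla'v|^2}-\gamma_0$. Dividing by $1+\gamma_0^2$ gives \eqref{EV2} by the definition of $C[p]$, which only involves $p'$.

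\textbf{Expanding the flux.} I write \eqref{EU1} and \eqref{EU3} as $u_t+\operatorname{div}\Phi=0$ and $\Phi_N=0$, where $\Phi:=\omega P\nabla H$ and $H=\operatorname{tr}(P\nabla^2v)/\omega$. By the product rule,
\[
\Phi=P\nabla\operatorname{tr}(P\nabla^2v)-\operatorname{tr}(P\nabla^2v)\,P\frac{\nabla\omega}{\omega},
\qquad
\nabla\operatorname{tr}(P\nabla^2v)=\bigl(P_{ij}D_{ijk}v\bigr)_k+\bigl(D_kP_{ij}\,D_{ij}v\bigr)_k .
\]
The third-order part of $\Phi_\ell$ is $P_{k\ell}P_{ij}D_{ijk}v$. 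I split it as $P_0\otimes P_0+a_{ijk\ell}(\nabla v)$, which gives $(P_0\nabla Lv)_\ell+(\mathbb{A}[\nabla v]\nabla^3v)_\ell$. Because $P_0$ is a constant diagonal matrix, $\operatorname{div}(P_0\nabla Lv)=L(Lv)=L^2v$ and $(P_0\nabla Lv)_N=(1+\gamma_0^2)^{-1}D_NLv$. This produces the linear parts of \eqref{EV1} and \eqref{EV3}.

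\textbf{The lower-order terms (main step).} It remains to show that the lower-order terms equal $-F$, that is,
\[
P\bigl(D_kP_{ij}D_{ij}v\bigr)_k-\operatorname{tr}(P\nabla^2v)\,P\frac{\nabla\omega}{\omega}=-\bigl(2B^2+(\operatorname{tr}B)B\bigr)p .
\]
I read the final $w$ in the definition of $F$ as the full slope $p$; I expect this reading may need checking. I will use the explicit derivatives
\[
D_k\omega=\frac{p\cdot D_kp}{\omega},\qquad
D_kP=-\frac{D_kp\otimes p+p\otimes D_kp}{\omega^2}+\frac{2(p\cdot D_kp)\,p\otimes p}{\omega^4},
\]
together with $D_kp=\nabla D_kv$. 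The key algebraic facts are $Pp=p/\omega^2$ and $I-P=p\otimes p/\omega^2$.

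In $\operatorname{tr}(D_kP\,\nabla^2v)$, the two rank-one pieces each give $-(\nabla^2v\,p)_k/\omega^2$ by symmetry of $\nabla^2v$. The last piece gives $2(p\cdot\nabla^2v\,p)(\nabla^2v\,p)_k/\omega^4$. I then apply $P$ and rewrite every occurrence of $p\otimes p/\omega^2$ as $I-P$. This should make all terms assemble into products of $P\nabla^2v/\omega=B$ acting on $p$: the two rank-one pieces give $-2B^2p$, and the $\nabla\omega$ term gives $-(\operatorname{tr}B)Bp$. Tracking the powers of $\omega$ and the factor $I-P$ versus $P$ is where I expect the bookkeeping difficulty.

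\textbf{Conclusion.} Once this identity is verified, $\Phi=P_0\nabla Lv+\mathbb{A}[\nabla v]\nabla^3v-F[\nabla v]$. Substituting into $u_t+\operatorname{div}\Phi=0$ gives \eqref{EV1}, since $u_t=v_t$. Taking the $N$th component of $\Phi$ gives \eqref{EV3}.
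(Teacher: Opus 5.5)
Your plan follows the paper's computation. You expand $\omega P(\nabla u)\nabla H$, split the top-order coefficient $P_{k\ell}P_{ij}$ as $(P_0)_{k\ell}(P_0)_{ij}+a_{ijk\ell}(\nabla v)$, and identify the remainder as $-(2B^2+(\operatorname{tr}B)B)\nabla u$. The paper differentiates $\omega H=\Delta u-\nabla^2u\nabla u\cdot\nabla u/\omega^2$ rather than $P$ inside $\operatorname{tr}(P\nabla^2v)$, but this amounts to the same thing.

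Your reading of $F$ is the correct one. The paper's own derivation ends with $\bigl((\operatorname{tr}\widetilde B)\widetilde B+2\widetilde B^2\bigr)\nabla u$. With $\nabla v$ in the last slot, the identity would be off by $\gamma_0\bigl(2B^2+(\operatorname{tr}B)B\bigr)e_N$ whenever $\gamma_0\neq0$; already for $N=1$ the lower-order flux is $-3\kappa^2u_x$, not $-3\kappa^2v_x$. The estimates of Lemma~\ref{LPer} are unaffected, since $B$ is already linear in $\nabla w$.

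However, one intermediate formula in your main step is wrong, and taken literally it would not assemble as you claim. Since $D_kp=\nabla^2v\,e_k$, each rank-one piece of $D_kP$ contributes $-D_kp\cdot\nabla^2v\,p/\omega^2=-\bigl((\nabla^2v)^2p\bigr)_k/\omega^2$ to $\operatorname{tr}(D_kP\,\nabla^2v)$, not $-(\nabla^2v\,p)_k/\omega^2$. (That trace must be quadratic in $\nabla^2v$.)

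Also, the rank-one pieces alone do not give $-2B^2p$. It is their sum with the last piece that does:
\[
-\frac{2}{\omega^2}(\nabla^2v)^2p+\frac{2(p\cdot\nabla^2v\,p)}{\omega^4}\nabla^2v\,p
=-\frac{2}{\omega^2}\nabla^2v\,P\,\nabla^2v\,p ,
\]
and applying $P$ then yields $-2B^2p$. Separately, $\nabla\omega=\nabla^2v\,p/\omega$ turns $-\operatorname{tr}(P\nabla^2v)P\nabla\omega/\omega$ into $-(\operatorname{tr}B)Bp$. With these corrections the computation closes as you intended.
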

\begin{proof}
It is easy to see the equivalence of \eqref{EU2} and \eqref{EV2}.
 To see the equivalence of \eqref{EU1} and \eqref{EV1} and also the equivalence of \eqref{EU3} and \eqref{EV3}, it suffices to prove that
\begin{equation} \label{EVK}
    \omega P(\nabla u) \nabla\operatorname{tr} \left(P(\nabla u)\omega^{-1} \nabla^2 u \right)
    = P(\gamma_0 e_N) \nabla Lv
    + \mathbb{A}[\nabla v] \nabla ^3 v
    - F[\nabla v],
\end{equation}
where $\omega:=\left(1+|\nabla u|^2\right)^{1/2}$($=\left(1+|\nabla v|^2\right)^{1/2}$).
 We note the mean curvature
\[
    H:= \operatorname{tr} \left( \frac{P(\nabla u)}{\omega}\nabla^2 u \right)
    = \frac{1}{\omega} \left( \Delta u - \frac{\nabla^2 u \nabla u \cdot\nabla u}{\omega^2} \right).
\]
Since $\nabla\omega^{-k}=-k\omega^{-(k+1)}\nabla^2 u\nabla u$, $k\in\mathbb{Z}_{\ge1}$, we see that
\begin{align*} % 3行目以降は1行目の＝に揃える
    &\omega\nabla H = - \frac{H\nabla^2 u \nabla u}{\omega}
    + \nabla \left(\Delta u - \frac{\nabla^2 u\nabla u\cdot\nabla u}{\omega^2
    } \right), \\
    & \nabla \frac{\nabla^2 u\nabla u\cdot \nabla u}{\omega^2}
    - \frac{1}{\omega^2} \sum_{i,j} D_i u D_j u \nabla D_{ij} u \\
    &\hspace{2.3em}= \frac{2}{\omega^2} (\nabla^2 u)^2 \nabla u
    - \frac{2(\nabla^2 u\nabla u\cdot\nabla u)\nabla^2 u\nabla u}{\omega^4} \\
    &\hspace{2.3em}= \frac{2}{\omega^2} \nabla^2 u \left(\nabla^2 u \nabla u
    - \frac{\nabla^2 u\nabla u\cdot\nabla u}{\omega^2} \nabla u \right) \\
    &\hspace{2.3em}= \frac{2}{\omega^2} \nabla^2 u \left( I - \frac{\nabla u\otimes\nabla u}{\omega^2} \right)
    \nabla^2 u \nabla u 
    = \frac{2}{\omega^2} \nabla^2 u P(\nabla u) \nabla^2 u \nabla u,
\end{align*}
where $D_{ij}=D_iD_j$.
 Thus,
% 原稿 2025/12/17、3-3/3
\[
    \omega\nabla H = \nabla \Delta u - \sum_{i,j} \frac{D_iu D_ju \nabla D_{ij}u}{\omega^2}
    - \frac{H\nabla^2 u \nabla u}{\omega}
    - 2 \frac{\nabla^2 u}{\omega^2} P(\nabla u) \nabla^2 u \nabla u.
\]
If we set a matrix $\widetilde{B}=P(\nabla u)\nabla^2 u/\omega$, then 
\[
    P(\nabla u)\omega\nabla H = P(\nabla u) \left(\nabla\Delta u - \sum_{i,j}\frac{D_iu D_ju \nabla D_{ij}u}{\omega^2} \right)
    - \left( (\operatorname{tr}\widetilde{B})\widetilde{B}+2\widetilde{B}^2 \right) \nabla u
\]
since $H=\operatorname{tr}\widetilde{B}$.
 The $\ell$th component of the first term of the right-hand side is
\begin{align*}
      &\left( P(\nabla u) \left( \nabla\Delta u
      - \sum_{i,j}\frac{D_iu D_ju \nabla D_{ij}u}{\omega^2} \right) \right)_\ell \\
      =&\sum_k P(\nabla u)_{\ell k} \left( D_k\Delta u -\sum_{i,j}\frac{D_iu D_ju D_{ijk}u}{\omega^2} \right) \\
      =&\sum_k P(\nabla u)_{\ell k} \sum_{i,j} P(\nabla u)_{ij} D_{ijk}u \\
      =&\sum_{i,j,k} \left( P(\gamma_0 e_N)_{ij}P(\gamma_0 e_N)_{\ell k}
      + a_{ijk\ell}(\nabla v) \right) D_{ijk}v \\
    =& \left( P(\gamma_0 e_N)\nabla \left( \operatorname{tr}P(\gamma_0 e_N)\nabla^2 u \right)
    + \mathbb{A}[\nabla v]\nabla^3 v \right)_\ell \\
    =& \left(P(\gamma_0 e_N) Lv
    + \mathbb{A}[\nabla v]\nabla^3 v \right)_\ell,
\end{align*}
where $D_{ijk}=D_iD_jD_k$.
 Since $\widetilde{B}=B[\nabla v]$, we see that
\[
    F[\nabla v] = \left( (\operatorname{tr}\widetilde{B})\widetilde{B} + 2\widetilde{B}^2 \right) \nabla v,
\]
we now conclude \eqref{EVK}.
\end{proof}
% 原稿 2026/1/1、3-4 1/6
We note that \eqref{EV1}, \eqref{EV2}, \eqref{EV3} can be written as
\begin{align}
    v_t + L^2 v - \operatorname{div}G = 0
    &\quad\text{in}\quad \mathbb{R}_+^N\times (0,T), \label{ELV1} \\
    \frac{1}{1+\gamma_0^2} D_N v - h = 0
    &\quad\text{on}\quad \partial\mathbb{R}_+^N\times (0,T), \label{ELV2} \\
    \frac{1}{1+\gamma_0^2} D_N Lv - G_N = 0
    &\quad\text{on}\quad \partial\mathbb{R}_+^N\times (0,T), \label{ELV3}
\end{align}
with $G=-\left(\mathbb{A}[\nabla v]\nabla^3v-F[\nabla v]\right)$, $h=C[\nabla v]$.
 Since $L=\operatorname{div}P(\gamma_0e_N)\nabla$, \eqref{EV1}, \eqref{EV3} can be written as
\begin{align*}
    v_t + \operatorname{div} \left(P(\gamma_0e_N)\nabla(Lv)-G \right) &=0 \\
    \left(P(\gamma_0e_N)\nabla(Lv)-G \right)_N &=0.
\end{align*}
We use this special structure to write a solution in its integral form.

Let $b(\cdot,t)$ be a fundamental solution of a (anisotropic) biharmonic operator of $L^2$.
 In other words,
\[
    b(x,t) := \frac{1}{(2\pi)^N} \int_{\mathbb{R}^N} e^{ix\cdot\xi} e^{-t\left(P(\gamma_0e_N)\xi\cdot\xi\right)^2} d\xi.
\]
We set
\begin{align*}
    b_n(x,y,t) &:= b(x-y,t) + b(x-y^*, t), \\
    b_d(x,y,t) &:= b(x-y,t) - b(x-y^*, t),
\end{align*}
where $y^*=y-2y_Ne_N$, a mirror image of $y$ with respect to $\partial\mathbb{R}_+^N$.
% 原稿 2026/1/1、3-5 2/6
 We set
\begin{align*}
    (e^{-tL_n^2}g)(x) &:= \int_{\mathbb{R}_+^N} b_n(x,y,t) g(y)\, dy \\
    (e^{-tL_d^2}g)(x) &:= \int_{\mathbb{R}_+^N} b_d(x,y,t) g(y)\, dy.
\end{align*}
Since
\begin{align*}
    (e^{-tL_n^2}g)(x) &= \int_{\mathbb{R}^N} b(x-y,t) g_\mathrm{even}(y)\, dy \\
    (e^{-tL_d^2}g)(x) &= \int_{\mathbb{R}^N} b(x-y,t) g_\mathrm{odd}(y)\, dy,
\end{align*}
where $g_\mathrm{even}$ (resp.\ $g_\mathrm{odd}$) is the even (resp.\ odd) extension of $g$ for $y_N<0$, i.e., $g_\mathrm{even}(y)=g(y^*)$, $g_\mathrm{odd}(y)=-g(y^*)$ for $y_N<0$, the function $v=e^{-tL_n^2}g$ satisfies the Neumann boundary condition
\[
    D_N v = 0, \quad
    D_N Lv = 0
\]
while $v=e^{-tL_d^2}g$ satisfies the Dirichlet boundary condition
\[
    v=0, \quad Lv = 0.
\]
Of course both $v$ satisfy
\[
    v_t + L^2 v = 0 \quad\text{in}\quad
    \mathbb{R}_+^N \times (0,\infty), \quad
    \left. v \right|_{t=0} = 0 \quad\text{in}\quad
    \mathbb{R}_+^N.
\]
For an $\mathbb{R}^N$-valued function $f=(f_1,\ldots,f_N)$ on $\mathbb{R}_+^N$ we set
\[
    e^{-tL_v^2}f := \left( e^{-tL_n^2}f_1, \ldots, e^{-tL_n^2}f_{N-1}, e^{-tL_d^2}f_N \right).
\]

We shall write a solution of \eqref{ELV1}--\eqref{ELV3} for given $G$ and $h$ by volume and layer potentials.
 We consider volume potentials
\[
    (V_n f)(t) := \int_0^t e^{-(t-s)L_n^2} f(s)\, ds
\]
for a scalar function $f$, and
\[
    (WG)(t) := \operatorname{div} \int_0^t e^{-(t-s)L_v^2} G(s)\, ds
\]
for a vector-valued function $G$.
 We also use a layer
% 原稿 2026/1/1、3-6 3/6
potential
\[
    (Sh)(t) := \int_0^t E(t-s)h(s)\, ds
\]
with
\[
    \left(E(t)h\right)(x) := \int_{\partial\mathbb{R}_+^N} L_x b_n (x,y,t) h(y)\, d\sigma(y),
\]
where $\sigma$ denotes the surface measure on $\partial\mathbb{R}_+^N$.
 A key observation is that a solution $v$ to the problem
\begin{align*}
    v_t + L^2 v &= \operatorname{div}G
    \quad\text{in}\quad
    \mathbb{R}^N \times (0,T), \\
    D_N v &= 0
    \quad\text{on}\quad
    \partial\mathbb{R}_+^N \times (0,T), \\
    \frac{1}{1+\gamma_0^2} D_N Lv &= G_N
    \quad\text{on}\quad
    \partial\mathbb{R}_+^N \times (0,T),
\end{align*}
with $v|_{t=0}=0$ is represented as $WG$, i.e., $v=WG$.
 This observation simplifies nonlinear estimates.

Our representation formula is summarized as follows.
 Although we give a version for $T=\infty$, it is trivial to write it in the case $T<\infty$.
 We shall write $v(t)=v(\cdot,t)$, $G(t)=G(\cdot,t)$ in the next lemma.
\begin{lem} \label{LRep}
Assume that $v\in Z^4\cap C\left(\overline{\mathbb{R}_+^N}\times[0,\infty)\right)$ satisfies \eqref{ELV1}--\eqref{ELV3} with $v|_{t=0}=v_0$.
 Then $v$ is represented as
\begin{equation} \label{ERep}
    v(t) = e^{-tL_n^2}v_0
    + (Sh)(t) - (WG)(t).
\end{equation}
\end{lem}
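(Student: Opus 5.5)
The plan is to prove \eqref{ERep} by \emph{uniqueness} for the linear problem. Set
\[
w := v - e^{-tL_n^2}v_0 - Sh + WG .
\]
We show that $w$ solves the homogeneous problem, namely
\[
w_t+L^2w=0,\quad D_Nw=0,\quad D_NLw=0,\quad w|_{t=0}=0 ,
\]
and then that $w\equiv 0$ in the class of functions that are bounded with bounded derivatives for $t>0$. So there are three checks, one per term, followed by a uniqueness argument.

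\emph{First term.} By the even-extension description, $e^{-tL_n^2}v_0$ solves $v_t+L^2v=0$ with $D_N=D_NL=0$ on the boundary and initial value $v_0$. Continuity at $t=0$ holds since $v_0$ is bounded and uniformly continuous; if only $\nabla v_0$ is bounded, local uniform convergence still holds.

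\emph{Third term.} Let $\Phi=\int_0^t e^{-(t-s)L_v^2}G(s)\,ds$, so that $WG=\operatorname{div}\Phi$. The components $\Phi_i$, $i<N$, satisfy Neumann conditions, and $\Phi_N$ satisfies Dirichlet conditions. Since $L$ has no mixed terms, $D_i$ for $i<N$ commutes with reflection and $D_N$ maps odd functions to even ones. Hence $\operatorname{div}\Phi$ is even in $x_N$, so $D_N WG=0$. Moreover $\Phi_t+L^2\Phi=G$, hence $(WG)_t+L^2WG=\operatorname{div}G$. For the flux condition, repeat the one-dimensional argument of the introduction. Evenness gives $D_ND_i\Phi_i=0$ on $\partial\mathbb{R}_+^N$ for $i<N$, and therefore $D_NL\,WG=D_NLD_N\Phi_N$ there. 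Since $\Phi_N$ is odd, $D_N^2\Phi_N=0$ on the boundary, which reduces this to $\frac{1}{1+\gamma_0^2}D_N^4\Phi_N+\sum_{i<N}D_i^2D_N^2\Phi_N$. Because $\Phi_N$ and $L\Phi_N$ vanish on the boundary, the tangential derivatives $D_i^2\Phi_N$ vanish there and so does $L^2\Phi_N$. This yields
\[
\frac{1}{1+\gamma_0^2}D_NL\,WG=(L^2\Phi_N)|_{\partial}\cdot(\text{const})\ldots ,
\]
and more cleanly $L^2\Phi_N=G_N-\partial_t\Phi_N=G_N$ on the boundary, since $\Phi_N=0$ there for all $t$. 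So the main step is to verify carefully, up to the constant $(1+\gamma_0^2)^{-1}$, that $\frac{1}{1+\gamma_0^2}D_NLD_N\Phi_N=L^2\Phi_N$ on the boundary, using $\Phi_N=L\Phi_N=0$ there. This is exactly the expansion of $L^2$ with the odd-order terms removed. Finally $WG|_{t=0}=0$.

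\emph{Second term.} $Sh$ is a Neumann-type layer potential: $L_xb_n$ solves the biharmonic heat equation, and $D_N$ of it integrated against $h$ gives the jump relation $\frac{1}{1+\gamma_0^2}D_NSh\to h$ as $x_N\downarrow0$, with $D_NLSh=0$ on the boundary. The last condition is clear because $L^2b_n(\cdot,y,t)$ has vanishing normal derivative for $y\in\partial\mathbb{R}_+^N$ (evenness), apart from the jump. The jump relation is the classical Solonnikov-type computation. Using $L_xb(x-y,t)$ and $\partial_t b=-L^2b$, write $D_N L_x b_n=2D_NLb$. Then the trace as $x_N\to0$ of $\int_0^t\int 2D_NL\,b\,h$ is computed by Fourier transform in $x'$: the $x_N$-integral of $D_NL^2b$ over time reduces to $\frac{1}{1+\gamma_0^2}^{-1}\cdot\frac12\delta$. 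I expect this jump computation, and the justification of limits for merely Hölder $h$, to be the main obstacle; I would first try Fourier transform in tangential variables and use the explicit identity $\int_0^\infty e^{-t a^2}\,dt=a^{-2}$.

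Finally, uniqueness. Since $w$ is even-extendable (Neumann conditions for $D_N$ and $D_NL$), its even extension $\tilde w$ solves $\tilde w_t+L^2\tilde w=0$ in $\mathbb{R}^N\times(0,\infty)$; the third derivatives match across the boundary thanks to the two conditions. By the hypothesis $v\in Z^4$, $\tilde w$ is bounded on $(\varepsilon,\infty)$ locally, or at least of linear growth with bounded gradient. Uniqueness for the whole-space biharmonic heat equation in this growth class with zero initial data then gives $w\equiv0$. This can be done by a Fourier/duality argument applied to $\nabla \tilde w$, which is bounded and satisfies $\nabla w\to 0$ weakly as $t\to0$; applied to $w$ itself, this yields $\nabla w=0$, and then $w\equiv0$ by the initial condition.
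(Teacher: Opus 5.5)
\textbf{The cancellation you need fails.} Your three term-by-term checks are individually correct; in particular $\frac{1}{1+\gamma_0^2}D_NL\,WG=L^2\Phi_N=G_N$ on $\partial\mathbb{R}_+^N$ is right. But you never combine them, and they do not give what you claim. Your computation shows that $WG$ carries exactly the same data as $v$: $(WG)_t+L^2WG=\operatorname{div}G$, $D_NWG=0$, and $\frac{1}{1+\gamma_0^2}D_NL\,WG=G_N$. So for $w:=v-e^{-tL_n^2}v_0-Sh+WG$, \eqref{ELV1} and \eqref{ELV3} give
\[
w_t+L^2w=2\operatorname{div}G\quad\text{in }\mathbb{R}_+^N,\qquad \frac{1}{1+\gamma_0^2}D_NLw=2G_N\quad\text{on }\partial\mathbb{R}_+^N .
\]
The $G$-terms double instead of cancelling. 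Hence $w$ does not solve the homogeneous problem, and the uniqueness step proves nothing.

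This cannot be repaired while keeping the sign in \eqref{ERep}. Cancellation forces $w:=v-e^{-tL_n^2}v_0-Sh-WG$, i.e.\ $v(t)=e^{-tL_n^2}v_0+(Sh)(t)+(WG)(t)$, and that is the correct identity. It is the one actually used in \eqref{EInt1}, where $-\operatorname{div}\int_0^te^{-(t-s)L_v^2}(\mathbb{A}[\nabla v]\nabla^3v-F[\nabla v])\,ds=+WG$. It also agrees with the paper's remark that $v=WG$ solves the problem with $v_0=0$, $h=0$. The paper's Green's-formula proof reaches $-WG$ only because it uses $D_{y_k}b_n=D_{x_k}b_n$. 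In fact $D_{y_k}b_n=-D_{x_k}b_n$ for $k<N$ and $D_{y_N}b_n=-D_{x_N}b_d$. Then $V_n(\operatorname{div}G)=-\int_0^t\int_{\partial\mathbb{R}_+^N}b_nG_N\,d\sigma\,ds+WG$, and the rest of that computation yields $+WG$.

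\textbf{With the sign corrected, your route is a valid alternative.} The paper derives the formula directly from $0=V_n(\operatorname{div}G-v_t-L^2v)$ by integration by parts. That needs only $D_{y_N}b_n=D_{y_N}L_yb_n=0$ on the boundary, and no jump relations or uniqueness theorem. You instead verify the boundary data of each potential and then appeal to uniqueness. Two remarks will make your version run smoothly.

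First, the jump relation for $Sh$, which you flagged as the main obstacle, needs no Fourier analysis. Set $\bar h(x,t)=h(x',t)$ and $\Psi=\int_0^te^{-(t-s)L_d^2}\bar h(s)\,ds$. Then $Sh=D_NL\Psi$; this is what the proof of Lemma~\ref{LLS} gives, and the minus sign in \eqref{EYP} is also a typo. The parity argument you used for $\Phi_N$ gives, on $\partial\mathbb{R}_+^N$,
\[
\frac{1}{1+\gamma_0^2}D_NSh=L^2\Psi=\bar h-\Psi_t=h,\qquad D_NL\,Sh=D_N^2L^2\Psi=D_N^2(\bar h-\Psi_t)=0 .
\]

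Second, for uniqueness, use that the $Z^4$ norm contains $\|v\|_{L^\infty}$. So $w$ is bounded, and the standard duality argument for bounded solutions applies directly to the even extension $\tilde w$. Do not rely on boundedness of $\nabla\tilde w$: $v\in Z^4$ only gives $|\nabla v|\le Ct^{-1/4}$.
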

\begin{proof}
Since $b$ is a rapidly decreasing function at the space infinity, we may freely use integration by parts in spatial variables for integrals appearing in this proof although $\mathbb{R}_+^N$ is unbounded.
 As in the case of the heat equation, we start with
% 原稿 2026/1/1、3-7 4/6
\begin{equation} \label{EGr}
    0 = V_n (\operatorname{div}G - v_t - L^2 v).
\end{equation}
Integrating by parts yields
\begin{align*}
    \left( V_n (\operatorname{div}G) \right)(\cdot,t) =
    & -\int_0^t \int_{\partial\mathbb{R}_+^N} b_n(\cdot, y, t-s) G_N(y,s)\, d\sigma(y)\,ds \\
    & -\int_0^t \int_{\mathbb{R}_+^N} \nabla_y b_n(\cdot, y, t-s) \cdot G(y,s)\, dy\, ds.
\end{align*}
Since 
\[
  D_{y_k}b_n(x,y,t)=\begin{cases}
    D_{x_k}b_n(x,y,t), & k=1, \ldots, N-1, \\
    D_{x_N}b_d(x,y,t), & k=N,
  \end{cases}
\]
we observe that
\begin{align*}
    &\int_0^t \int_{\mathbb{R}^N_+} \nabla_y b_n (\cdot,y,t-s) \cdot G(y,s)\,dy\,ds \\
    =& \int_0^t \left( \sum_{k=0}^{N-1} D_k e^{-(t-s)L_n^2} G_k(s)
    + D_N e^{-(t-s)L_d^2}G_N(s) \right) ds \\
    =& (WG)(t).
\end{align*}
We notice that
\begin{align*}    
    (V_n v_t)(\cdot,t) &= v(t) - e^{-tL_n^2}v_0
    + \int_0^t \int_{\mathbb{R}_+^N} \partial_t b_n(\cdot,y,t-s)v(y,s)\,dy\,ds \\
    &= v(t) - e^{-tL_n^2}v_0
    - \int_0^t \int_{\mathbb{R}_+^N} L_y^2 b_n(\cdot,y,t-s)v(y,s)\,dy\,ds.
\end{align*}
The identity \eqref{EGr} now yields
\begin{equation}
\begin{split} \label{ERep1}
    0= &-\int_0^t\int_{\partial\mathbb{R}_+^N} b_n (\cdot,y,t-s) G_N(y,s)\,d\sigma(y)\,ds 
    - (WG)(t) - v(t) + e^{-tL_n^2}v_0 \\
    &+ \int_0^t \int_{\mathbb{R}_+^N} \left\{ L_x^2 b_n(\cdot,y,t-s) v(y,s)
    - b_n(\cdot,y,t-s)L^2 v(y,s) \right\} dy\,ds. 
\end{split}
\end{equation}
% 原稿 2026/1/1、3-8 5/6
Integrating by parts, we observe that
\begin{align*}
    &\int_{\mathbb{R}_+^N} L_y^2 b_n(\cdot,y,t-s)v(y,s)\,dy \\
    =& -\int_{\partial\mathbb{R}^N_+} \left( P(\gamma_0e_N) \nabla_y L_y b_n(\cdot,y,t-s) \right)_N v(y,s)\,d\sigma(y) \\
    & - \int_{\mathbb{R}^N_+} P(\gamma_0e_N) \nabla_y L_y b_n(\cdot,y,t-s) \cdot \nabla v(y,s)\,dy \\
    =& -\frac{1}{1+\gamma_0^2} \int_{\partial\mathbb{R}^N_+} D_{y_N} b_n(\cdot,y,t-s) v(y,s)\,d\sigma(y)
    - \int_{\mathbb{R}^N_+} \nabla_y L_y b_n(\cdot,y,t-s) \cdot P(\gamma_0 e_N) \nabla v(y,s)\,dy.
\end{align*}
Integrating the second term by parts, we arrive at
\begin{align*}
    &\int_{\mathbb{R}_+^N} L_y^2 b_n(\cdot,y,t-s) v(y,s)\,dy \\
    =& -\frac{1}{1+\gamma_0^2} \int_{\partial\mathbb{R}_+^N}\left\{D_{y_N} L_y b_n(\cdot,y,t-s) v(y,s)
    -L_y b_n(\cdot,y,t-s) D_N v(y,s) \right\}\,d\sigma(y)\, \\
    &+ \int_{\mathbb{R}_+^N}L_y b_n(\cdot,y,t-s) L_y v(y,s)\,dy.
\end{align*}
Since $D_{y_N}b_n(x,y,t)=D_{y_N}^3b_n(x,y,t)=0$ for $y\in\partial\mathbb{R}_+^N$, we observe that
\[
    D_{y_N}L_y b_n(\cdot,y,t-s) = 0
    \quad\text{on}\quad
    \partial\mathbb{R}_+^N.
\]
Thus
\begin{align}
        \int_{\mathbb{R}_+^N}L_y^2 b_n&(\cdot,y,t-s) v(y,s)\, dy \\
    &= \int_{\partial\mathbb{R}_+^N}L_y b_n(\cdot,y,t-s)h(y,s)\,d\sigma(y) 
    + \int_{\mathbb{R}_+^N} L_y b_n(\cdot,y,t-s)L_y v(y,s)\,dy \\
    &= (Sh)(t) + \int_{\mathbb{R}_+^N}L_y b_n(\cdot,y,t-s)L_y v(y,s)\,dy.\label{ERep2}
\end{align}
% 原稿 2026/1/1、3-9 6/6
Similarly, integrating by parts yields
 \begin{align}
    &\int_{\mathbb{R}_+^N}b_n(\cdot,y,t-s)L_y^2 v(y,s)\,dy \notag \\
    =& \frac{1}{1+\gamma_0^2}\int_{\partial\mathbb{R}_+^N}\left\{D_{y_N}b_n(\cdot,y,t-s)Lv(y,s)-b_n(\cdot,y,t-s)D_n Lv(y,s) \right\}\,d\sigma(y) \notag \\
    &+ \int_{\mathbb{R}_+^N}L_y b_n(\cdot,y,t-s)L_y v(y,s)\,dy \notag \\
    =& -\int_{\partial\mathbb{R}_+^N}b_n(\cdot,y,t-s) G_N(y,s)\,dy
    +\int_{\mathbb{R}_+^N}L_y b_n(\cdot,y,t-s)L_y v(y,s)\,dy. \label{ERep3}
  \end{align}
Combining \eqref{ERep1}--\eqref{ERep3}, we now conclude that
\begin{align*}
  0 =& -\int_0^t\int_{\partial\mathbb{R}_+^N} b_n (\cdot,y,t-s) G_N(y,s)\,d\sigma(y)\,ds - (WG)(t) \\
  &-v(t) + e^{-tL_n^2} v_0 + (Sh)(t)
  + \int_0^t \int_{\partial\mathbb{R}_+^N} b_n(\cdot,y,t-s) G_N(y,s)\,d\sigma(y)\,ds,
\end{align*}
which is exactly the formula \eqref{ERep}.
\end{proof}

%%%%%%%%%%%%%%%%%%%%%%%%%%%%%%%%%%%%%%%%%%%%%%%%%
\section{Linear estimates} \label{SL} % Section 4

% 原稿 2025/12/21、4-1/5
We shall derive a few estimates for volume and layer potentials in our rescaled H\"older norms.
\begin{lem} \label{LLV}
Let $k\in\mathbb{Z}_{\ge 0}$, $\alpha>-4$, $\mu\in(0,1)$.
 For $f\in Z_\alpha^{k+\mu}$, set
\[
    (V_n f)(t):=\int_0^t e^{-(t-s)L_n^2}f(s)\,ds
\]
Then $V_n$ is a well-defined bounded linear operator from $Z_\alpha^{k+\mu}$ to $Z_{\alpha+4}^{k+4+\mu}$.
 In other words, the estimate
\begin{equation} \label{ELV}
    \|V_n f\|_{Z_{\alpha+4}^{k+4+\mu}}
    \le C \|f\|_{Z_\alpha^{k+\mu}}
\end{equation}
holds with some constant $C$ depending only on $k$, $\alpha$, $\mu$.
 Similarly, $V_d$, $V_v$ are bounded from $Z_\alpha^{k+\mu}$ to $Z_{\alpha+4}^{k+4+\mu}$, where
 \[
    V_d f
    = \int_0^t e^{-(t-s)L_d^2} f(s)\, ds, \quad
    V_v G
    = \int_0^t e^{-(t-s)L_v^2} G(s)\, ds.
\]
\end{lem}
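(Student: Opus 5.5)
By the scaling invariance \eqref{SIP} and the corresponding invariance of the kernel, $b(\sigma^{1/4}x,\sigma t)=\sigma^{-N/4}b(x,t)$, I would reduce \eqref{ELV} to a single dyadic window. It suffices to show
\[
    \|V_nf\|'_{BC^{k+4+\mu,(k+4+\mu)/4}\left(\overline{\mathbb{R}_+^N}\times(t/2,t)\right)}\le C t^{(\alpha+4)/4}\|f\|_{Z_\alpha^{k+\mu}}
\]
for every $t>0$, and scaling reduces this further to the case $t=1$. On the window $(1/2,1)$ I split the time integral at $s=1/4$:
\[
    V_nf(\tau)=\int_0^{1/4}e^{-(\tau-s)L_n^2}f(s)\,ds+\int_{1/4}^{\tau}e^{-(\tau-s)L_n^2}f(s)\,ds=:I_1(\tau)+I_2(\tau),\qquad \tau\in(1/2,1).
\]
The same splitting is used for $V_d$, and $V_v$ is handled componentwise by the $V_n$ and $V_d$ cases.

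\emph{Far part $I_1$.} Here $\tau-s\ge1/4$, so the kernel is smooth and harmless. Write $e^{-\tau L_n^2}g$ (resp.\ $e^{-\tau L_d^2}g$) as a convolution of $b$ with the even (resp.\ odd) extension of $g$. The Fourier representation of $b$ gives $\|\nabla^\ell\partial_\tau^m b(\cdot,\tau)\|_{L^1(\mathbb{R}^N)}\le C\tau^{-(\ell+4m)/4}$. Hence every derivative $\nabla^\ell\partial_\tau^m I_1$ is bounded by $C\int_0^{1/4}\|f(s)\|_{L^\infty}\,ds\le C\|f\|_{Z_\alpha^{k+\mu}}\int_0^{1/4}s^{\alpha/4}ds$, and this integral is finite exactly because $\alpha>-4$. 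This is also where well-definedness of $V_nf$ comes from. The H\"older seminorms of $I_1$ are then controlled by bounds on one more derivative, so no regularity of $f$ is needed in this part.

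\emph{Near part $I_2$.} On $(1/4,1)$ the function $w=I_2$ solves
\[
    w_\tau+L^2w=f\ \text{on}\ \mathbb{R}_+^N\times(1/4,1),\qquad w(1/4)=0,
\]
with homogeneous Neumann data $D_Nw=D_NLw=0$ (resp.\ Dirichlet data $w=Lw=0$ for $V_d$). The zero initial datum makes the compatibility conditions at $\tau=1/4$ automatic. I would invoke Solonnikov's classical half-space Schauder estimates \cite{So65} for this parabolic boundary value problem, which satisfies the complementing condition (cf.\ \ref{SA}). They give
\[
    \|w\|_{BC^{k+4+\mu,(k+4+\mu)/4}}\le C\|f\|_{BC^{k+\mu,(k+\mu)/4}\left(\overline{\mathbb{R}_+^N}\times(1/4,1)\right)}.
\]
For higher $k$, tangential derivatives commute with the problem, and normal derivatives are recovered from the equation. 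The right-hand side involves $f$ on the two windows $(1/4,1/2)$ and $(1/2,1)$, whose scaled norms are each $\le C\|f\|_{Z_\alpha^{k+\mu}}$. To combine the time-H\"older seminorms across the two windows, I use the sup bound when $|s-s'|$ is comparable to $1$ and the seminorm of a single window of length $1/2$ otherwise. Should Solonnikov's estimate not be directly quotable in this anisotropic form, a linear change of the variable $x_N$ turns $L$ into $\Delta$ and preserves both the half space and the boundary conditions.

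I expect the main obstacle to be the near part for $V_d$, and for the $N$th component of $V_v$. The odd extension of $f$ is discontinuous across $\partial\mathbb{R}_+^N$ when $f\neq0$ there, so the simple whole-space convolution argument fails in that case. This is precisely why the plan goes through genuine half-space Schauder theory rather than extension. For $V_n$ one could alternatively extend evenly, which preserves $C^\mu$ for $k=0$, and apply whole-space estimates; but for $k\ge1$ odd normal derivatives break this, so I would treat all cases uniformly via \cite{So65}. The constants depend only on $k,\alpha,\mu$ (and $N,\gamma_0$).
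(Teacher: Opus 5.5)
Your plan matches the paper's proof step for step. You reduce to one dyadic window by scaling and split the Duhamel integral sharply at $s=1/4$ (the paper's $\widetilde{t}$). The far part is handled by $L^1$ bounds on derivatives of $b$ using $\alpha>-4$, and the near part by Solonnikov's half-space Schauder estimate started from zero data. The far part and your gluing of time-H\"older seminorms over overlapping windows are fine.

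\textbf{The gap.} The step that fails is the claim that the zero initial datum makes the compatibility conditions at $\tau=1/4$ automatic. With it goes the bound for $w=I_2$ in $BC^{k+4+\mu,(k+4+\mu)/4}$ on all of $(1/4,1)$.

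Suppose $w$ had that regularity up to $\tau=1/4$. Then $\partial_\tau w$ would be continuous up to the corner $\{x_N=0,\ \tau=1/4\}$, with $\partial_\tau w(\cdot,1/4)=f(\cdot,1/4)$ because $w(\cdot,1/4)=0$. Differentiating the boundary conditions in $\tau$ then forces:
\begin{itemize}
\item[(a)] $f(\cdot,1/4)=0$ on $\partial\mathbb{R}^N_+$ for $V_d$, already when $k=0$;
\item[(b)] $D_Nf(\cdot,1/4)=0$ on $\partial\mathbb{R}^N_+$ for $V_n$, when $k\ge1$;
\item[(c)] further conditions of the same kind for larger $k$.
\end{itemize}
Nothing in $Z^{k+\mu}_\alpha$ enforces these. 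For instance, for $V_d$ and $f\equiv1$ one has $\partial_\tau w(x,\tau)=\phi\bigl(x_N(\tau-1/4)^{-1/4}\bigr)$ with $\phi(0)=0$ and $\phi(z)\to1$ as $z\to\infty$. So $[\partial_\tau w]_{C^\mu_x}$ blows up as $\tau\downarrow1/4$. You rightly single out the boundary trace of $f$ as the main difficulty for $V_d$, but half-space Schauder theory does not remove it. It reappears as a violated compatibility condition at the initial time of the near part.

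\textbf{The repair.} The conclusion is only needed on $(1/2,1)$, so the fix is cheap: split smoothly instead of sharply. Take $\eta\in C^\infty(\mathbb{R})$ with $\eta=0$ on $(-\infty,5/16]$ and $\eta=1$ on $[3/8,\infty)$. For $\tau\in(1/2,1)$ write
\[
    V_nf(\tau)=\int_0^{\tau}e^{-(\tau-s)L_n^2}\bigl(1-\eta(s)\bigr)f(s)\,ds+\int_{1/4}^{\tau}e^{-(\tau-s)L_n^2}\eta(s)f(s)\,ds .
\]
In the first term $\tau-s\ge1/8$, so your kernel argument applies unchanged. The second term vanishes for $\tau\le5/16$ and solves the problem with source $\eta f$, which vanishes near $\tau=1/4$. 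Hence all compatibility conditions hold trivially and Solonnikov's estimate applies on $(1/4,1)$, with $\|\eta f\|_{BC^{k+\mu,(k+\mu)/4}}\le C\|f\|_{BC^{k+\mu,(k+\mu)/4}(\overline{\mathbb{R}^N_+}\times(1/4,1))}$. A Schauder estimate localized in time would work equally well: bound the norm on $(1/2,1)$ by that of $f$ on $(1/4,1)$ plus $\sup|w|$.

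The paper's proof makes the same sharp split at $\widetilde{t}$. It then applies Proposition~\ref{PSchauder}, stated without compatibility conditions, on the whole rescaled interval, so it tacitly needs the same modification. With it, your argument and the paper's coincide.
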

\begin{proof}
We fix $\widetilde{t}>0$ and let $t\in(4\widetilde{t},8\widetilde{t})$.
 Set
\[
    I:=\int_0^{\widetilde{t}}e^{-(t-s)L_n^2}f(s)\,ds,\quad I\!I:=\int_{\widetilde{t}}^te^{-(t-s)L_n^2}f(s)\,ds.
\]
We first estimate $I$. Since $\tau-s>t/4$ for $s\in(0,\widetilde{t})$ and $\tau\in(t/2,t)$, it follows from a semigroup estimate for the biharmonic heat semigroup that
\[
    \left\|\nabla^{\ell}e^{-(\tau-s)L_n^2}f(s)\right\|_{L^\infty(\mathbb{R}_+^N)}
    \le C(\tau-s)^{-\ell/4} \left\|f(s)\right\|_{L^\infty(\mathbb{R}_+^N)}
    \le Ct^{-\ell/4}s^{\alpha/4}\|f\|_{Z^{k+\mu}_{\alpha}}
\]
for $s\in(0,\widetilde{t})$, $\tau\in(t/2,t)$, and $\ell\in\mathbb{Z}_{\ge 0}$.
 (Since $e^{-tL_n^2}g=b*g_\mathrm{even}$, such estimates are reduced to the case of the whole space which are well known; see e.g.\ \cite{KL12}, \cite{DY23}.)
 This implies that $I$ is well-defined, and the estimate
\[
    \|\nabla^{\ell}I\|_{L^\infty(\mathbb{R}_+^N\times(t/2,t))}\le Ct^{(\alpha+4-\ell)/4}\|f\|_{Z^{k+\mu}_{\alpha}}
\]
holds for $\ell\in\mathbb{Z}_{\ge 0}$.
 Furthermore, since $\partial_tI=-L^2I$, we see that
\[
    \|\nabla^\ell\partial_t^mI\|_{L^\infty\left(\mathbb{R}_+^N\times(t/2,t)\right)}
    \le Ct^{(\alpha+4-\ell-4m)/4}\|f\|_{Z^{k+\mu}_{\alpha}}
\]
for $\ell,m\in\mathbb{Z}_{\ge 0}$.
 By interpolation, we obtain
\begin{equation}\label{EEsI}
      \|I\|_{BC^{k+4+\mu,(k+4+\mu)/4}\left(\mathbb{R}_+^N\times(t/2,t)\right)}'
      \le Ct^{(\alpha+4)/4}\|f\|_{Z^{k+\mu}_{\alpha}}.
\end{equation}
We next estimate $I\!I$.
 We see that $I\!I$ solves
\begin{align*}
    &\partial_tI\!I=-L^2I\!I+f
    \quad\text{in}\quad\mathbb{R}_+^N\times(\widetilde{t},t),\\
    &D_NI\!I =0,\quad
    D_NL I\!I  =0 \quad\text{on}\quad\partial\mathbb{R}_+^N\times(\widetilde{t},t),\\
    &I\!I(\cdot,\widetilde{t})=0\quad\text{in}\quad\mathbb{R}_+^N.
\end{align*}
We use the parabolic Schauder estimate.
 However, instead of estimating $I\!I$ itself, we estimate its scaled function defined by
\[
    \widetilde{I\!I}(y,s) :=I\!I\left((t-\widetilde{t})^{1/4}y,\ \widetilde{t}+(t-\widetilde{t})s \right),\ 
    (y,s) \in \overline{\mathbb{R}_+^N}\times(0,1).
\]
This function $\widetilde{I\!I}$ satisfies
\begin{align*}
    &\partial_s\widetilde{I\!I} = -L^2\widetilde{I\!I} + \widetilde{f}
    \quad\text{in}\quad
    \mathbb{R}_+^N\times(0,1) \\
    &D_N\widetilde{I\!I} = 0, \quad
    D_N \widetilde{LI\!I}=0
    \quad\text{on}\quad
    \partial\mathbb{R}_+^N\times(0,1) \\
    &\widetilde{I\!I}(\cdot,0) = 0
    \quad\text{in}\quad
    \mathbb{R}_+^N,
\end{align*}
where
\[
    \widetilde{f}(y,s) := (t-\widetilde{t})
    f\left( (t-\widetilde{t})^{1/4}y,\ 
    \widetilde{t}+(t-\widetilde{t})s \right),\quad
    (y,s) \in \overline{\mathbb{R}_+^N}\times(0,1).
\]
(See also \cite[Section~2.3]{GGK25}.)
 Using the parabolic Schauder estimate up to the boundary \cite{So65} (see also Proposition~\ref{PSchauder}), we obtain
\begin{equation*}
    \|\widetilde{I\!I}\|'_{BC^{k+4+\mu,(k+4+\mu)/4}
    \left(\overline{\mathbb{R}_+^N}\times(0,1)\right)} 
    \le C \|f\|'_{BC^{k+\mu,(k+\mu)/4}\left(\overline{\mathbb{R}_+^N}\times(0,1)\right)}.
\end{equation*}
This together with the scale invariant property \eqref{SIP} and $\widetilde{t}\in(t/8, t/4)$ implies that
\begin{equation}\label{EEsII}
    \|I\!I\|_{BC^{k+4+\mu,(k+4+\mu)/4}\left(\mathbb{R}_+^N\times(\widetilde{t},t)\right)}'\le Ct\|f\|_{BC^{k+\mu,(k+\mu)/4}\left(\mathbb{R}_+^N\times(\widetilde{t},t)\right)}'\le Ct^{(\alpha+4)/4}\|f\|_{Z^{k+\mu}_{\alpha}}.
\end{equation}
Combining \eqref{EEsI} and \eqref{EEsII}, we obtain \eqref{ELV}.
 This completes the proof of Lemma~\ref{LLV}.
\end{proof}
%
% 原稿 2025/12/21、4-2/5
\begin{lem}[Estimate for a layer potential] \label{LLS}
Let $k\in\mathbb{Z}_{\ge 0}$, $\alpha>-4$, $\mu\in(0,1)$.
 For $h\in Z^{k+\mu}_\alpha$ defined on $\partial\mathbb{R}_+^N\times(0,\infty)$, define a function $Sh$ on $\mathbb{R}_+^N\times(0,\infty)$ by
\[
    (Sh)(t):=\int_0^t E(t-s)h(s)\,ds,\quad t>0
\]
where
\[
    \left( E(t)h \right)(x)
    := \int_{\partial\mathbb{R}^N_+} L_x b_n(x,y,t) h(y) \,d\sigma(y), \quad
    x \in \mathbb{R}^N_+.
\]
Then $S$ is a well-defined bounded linear operator from $Z^{k+\mu}_\alpha$ to $Z^{k+1+\mu}_{\alpha+1}$.
 In other words,
\begin{equation} \label{ELS}
    \|Sh\|_{ Z^{k+1+\mu}_{\alpha+1}}
    \le C\|h\|_{Z^{k+\mu}_\alpha}
\end{equation}
with some constant $C$ depending only on $k$, $\alpha$, $\mu$.
\end{lem}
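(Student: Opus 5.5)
We follow the proof of Lemma~\ref{LLV}: fix $\widetilde{t}>0$, take $t\in(4\widetilde{t},8\widetilde{t})$, and split
\[
    (Sh)(t) = I + I\!I, \quad
    I := \int_0^{\widetilde{t}} E(t-s)h(s)\,ds, \quad
    I\!I := \int_{\widetilde{t}}^t E(t-s)h(s)\,ds.
\]

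\emph{Kernel bound.} We first record the kernel estimate. Since $b(\cdot,t)=t^{-N/4}b(t^{-1/4}\cdot,1)$ and $b(\cdot,1)$ is rapidly decreasing, for $x\in\overline{\mathbb{R}_+^N}$ and $y\in\partial\mathbb{R}_+^N$ we have
\[
    |\nabla_x^\ell\partial_t^m L_xb_n(x,y,t)| \le C t^{-(N+2+\ell+4m)/4}\,\Phi\bigl(t^{-1/4}(x-y)\bigr)
\]
with $\Phi$ integrable on $\mathbb{R}^{N-1}$ uniformly in $x_N$. Integrating over $y\in\partial\mathbb{R}_+^N$ gains a factor $t^{(N-1)/4}$, so
\[
    \|\nabla^\ell\partial_t^mE(t)g\|_{L^\infty}\le Ct^{-(3+\ell+4m)/4}\|g\|_{L^\infty(\partial\mathbb{R}_+^N)}.
\]

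\emph{The term $I$.} For $\tau\in(t/2,t)$ and $s<\widetilde{t}$ we have $\tau-s\sim t$. Using $\|h(s)\|_\infty\le Cs^{\alpha/4}\|h\|_{Z_\alpha^{k+\mu}}$ and $\alpha>-4$, the $s$-integral over $(0,\widetilde{t})$ gives
\[
    \|\nabla^\ell\partial_t^m I\|_{L^\infty(\mathbb{R}_+^N\times(t/2,t))}\le Ct^{(\alpha+1-\ell-4m)/4}\|h\|_{Z_\alpha^{k+\mu}}.
\]
Interpolation then yields the scaled $BC^{k+1+\mu}$ bound with weight $t^{(\alpha+1)/4}$, exactly as in \eqref{EEsI}.

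\emph{The term $I\!I$.} This term carries the regularity gain. Integrating the kernel directly is singular at $s=t$ (the kernel is of order $(t-s)^{-3/4}$), so instead we characterize $I\!I$ by a boundary value problem. By the computation in Lemma~\ref{LRep}, $w=I\!I$ is the solution of
\[
    w_t+L^2w=0 \ \text{in}\ \mathbb{R}_+^N\times(\widetilde{t},t), \quad
    \tfrac{1}{1+\gamma_0^2}D_Nw=h, \quad
    D_NLw=0 \ \text{on}\ \partial\mathbb{R}_+^N, \quad
    w(\widetilde{t})=0.
\]
To verify this, apply the representation \eqref{ERep} with $G=0$, restricted to $s\in(\widetilde{t},t)$, together with uniqueness for the Neumann-type problem in bounded H\"older classes. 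Equivalently, check directly that $D_{x_N}L_xb_n(x,y,t)$ with $y\in\partial\mathbb{R}_+^N$ acts as an approximate identity on the boundary and that $D_NL^2b_n=0$ there.

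We then rescale to the unit time interval as in the proof of Lemma~\ref{LLV}, setting
\[
    \widetilde{w}(y,s)=w\bigl((t-\widetilde{t})^{1/4}y,\ \widetilde{t}+(t-\widetilde{t})s\bigr), \quad
    \widetilde{h}=(t-\widetilde{t})^{1/4}h(\cdots),
\]
and apply the parabolic Schauder estimate up to the boundary for the complementing boundary operators $(D_N,D_NL)$ of orders $1$ and $3$ (Proposition~\ref{PSchauder}, \cite{So65}). This gives
\[
    \|\widetilde{w}\|'_{BC^{k+1+\mu}}\le C\|\widetilde{h}\|'_{BC^{k+\mu}(\partial\mathbb{R}_+^N\times(0,1))},
\]
which is the standard Schauder gain of $4-3=1$ derivatives for a first-order boundary operator.

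\emph{Main obstacle.} The delicate points are two. First, the Schauder theory requires a compatibility condition at $s=0$: since $w(\widetilde{t})=0$ we need $h(\widetilde{t})=0$, which fails in general. We would handle this with a smooth cutoff in time: write $h=\chi h+(1-\chi)h$ with $\chi$ vanishing near $\widetilde{t}$ and supported where $s>\widetilde{t}+(t-\widetilde{t})/8$, and treat the $(1-\chi)h$ part like $I$, since the kernel is smooth on $(t/2,t)$. Alternatively, start the problem from the initial datum $(Sh)(\widetilde{t})$, which is controlled by the first step. Second, the available Schauder estimate must measure $h$ in the correct boundary H\"older space, namely $BC^{k+\mu,(k+\mu)/4}$, which matches $Z_\alpha^{k+\mu}$. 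Undoing the scaling via \eqref{SIP} gives
\[
    \|I\!I\|'\le Ct^{1/4}\cdot t^{\alpha/4}\|h\|_{Z_\alpha^{k+\mu}}.
\]
Combined with the bound on $I$, this yields \eqref{ELS}.
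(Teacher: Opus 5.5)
There is a genuine gap, for the range $k\in\{0,1,2\}$, which the statement includes. Your bound for $I$ is correct. For $k\ge 3$ the cutoff version of your treatment of $I\!I$ also works:
\begin{itemize}
\item the piece with boundary datum $(1+\gamma_0^2)\chi h$ vanishes near $\widetilde t$, so it meets all compatibility conditions;
\item Proposition~\ref{PSchauder}(ii), applied with $k-3$ in place of $k$, asks for exactly $D_Nw\in BC^{k+\mu,(k+\mu)/4}$ on the boundary and returns $w\in BC^{k+1+\mu,(k+1+\mu)/4}$;
\item Lemma~\ref{LRep} with $G=0$ identifies that solution with the layer potential.
\end{itemize}

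For $k\le 2$ the target regularity $k+1+\mu$ lies below the order $4$ of $L^2$. The estimate $\|\widetilde w\|_{BC^{k+1+\mu}}\le C\|\widetilde h\|_{BC^{k+\mu}}$ is then not a Schauder estimate in the sense of Proposition~\ref{PSchauder} or \cite[Theorem~4.9]{So65}. Those results start at $w\in BC^{4+\mu,1+\mu/4}$ with $D_Nw|_{\partial\mathbb{R}^N_+}\in BC^{3+\mu,(3+\mu)/4}$. They also presuppose $w\in BC^{4,1}$, which $I\!I$ cannot satisfy, since $D_N I\!I=(1+\gamma_0^2)h$ is only $C^{k+\mu}$ on the boundary with $k\le 2$. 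The same obstruction blocks your identification step: there is no $Z^4$ solution of the boundary value problem to which Lemma~\ref{LRep} could be applied. So for these $k$ you would need a separate low-order, potential-theoretic estimate.

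Your alternative of restarting from $(Sh)(\widetilde t)$ does not help. A $BC^{k+1+\mu}$ bound for $(Sh)(\widetilde t)$ is the claim itself, while your estimate of $I$ only controls $\int_0^{\widetilde t}E(\tau-s)h(s)\,ds$ for $\tau$ well separated from $\widetilde t$.

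The paper avoids the jump relation, the compatibility issue and the low-regularity problem in one step. It extends $h$ constantly in the normal direction, $\overline h(x,t)=h((x',0),t)$, so that $\|\overline h\|_{Z^{k+\mu}_\alpha}\le\|h\|_{Z^{k+\mu}_\alpha}$. It then uses the divergence theorem in $y$ and the reflection identity $D_{y_N}b_n(x,y,t)=-D_{x_N}b_d(x,y,t)$ to write $E(t-s)h(s)=D_NL\,e^{-(t-s)L_d^2}\overline h(s)$. Hence $Sh=D_NL\,V_d\overline h$; the sign is immaterial. Lemma~\ref{LLV} gives $V_d\overline h\in Z^{k+4+\mu}_{\alpha+4}$, and three spatial derivatives map this into $Z^{k+1+\mu}_{\alpha+1}$ for every $k\ge 0$. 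Only interior data and homogeneous boundary conditions enter.

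The paper applies Lemma~\ref{LLS} only with $k\ge 4$, so your argument covers its uses. As a proof of the lemma as stated, though, it needs this trick or a genuine low-order estimate for $k\le 2$.
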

\begin{proof}
Let $\overline{h}(x,t):=h\left((x',0),t\right)$.
 It clearly holds that $\|\overline{h}\|_{Z^{k+\mu}_\alpha}\le\|h\|_{Z^{k+\mu}_\alpha}$.
 It follows from the definition of $E(t)$ that
\[
      E(t-s)h(s)=-\int_{\mathbb{R}^N_+}D_{y_N}L_x b_n (x,y,t-s)\overline{h}(y,s)\,dy=
      \int_{\mathbb{R}^N_+}D_{x_N}L_x b_d (x,y,t-s)\overline{h}(y,s)\,dy.
\]
This together with the semigroup estimate that
\[
    \left\|E(t-s)h(s)\right\|_{L^\infty(\mathbb{R}^N_+)}
    \le C(t-s)^{-3/4}\left\|\overline{h}(s)\right\|_{L^\infty(\mathbb{R}^N_+)}
    \le C(t-s)^{-3/4}s^{\alpha}\|h\|_{Z^{k+\mu}_\alpha}.
\]
Thus $Sh$ is well-defined and
\begin{equation}\label{EYP}
    Sh(t)=-D_NL\int_0^t e^{-(t-s) L_d^2}\overline{h}(s)\,ds.
\end{equation}
Furthermore, the estimate \eqref{ELV} implies \eqref{ELS}.
\end{proof}
%
% 原稿 2025/12/21、5-1/5
%%%%%%%%%%%%%%%%%%%%%%%%%%%%%%%%%%%%%%%%%%%%%%%%%
\section{Estimates of nonlinear terms} \label{SN} % Section 5

In Section~\ref{SINT}, we reduced the problem \eqref{EU1}--\eqref{EU3} with initial data $v_0$ to an integral equation
\begin{equation}\label{EInt1}
\begin{aligned}
    v(t)=e^{-tL_n^2}v_0
    &+\int_0^t E(t-s)C\left[\nabla v(s)\right]\,ds\\
    &-\operatorname{div}\int_0^t e^{-(t-s)L_v^2}\left(\mathbb{A}[\nabla v]\nabla^3v-F[\nabla v]\right)(s)\,ds.
\end{aligned}
\end{equation}
We do not solve this equation directly.
 Instead we shall solve the equation for its gradient $w=\nabla v$ which is obtained by differentiating \eqref{EInt1}.
 Its explicit form is
\begin{equation*}
\begin{aligned}
    w(t)=e^{-tL_v^2}\nabla v_0
    &+ \nabla \int_0^t E(t-s)C\left[w(s)\right]\,ds\\
    &- \nabla\operatorname{div}\int_0^t e^{-(t-s)L_v^2}\left(\mathbb{A}[w]\nabla^2 w - F[w]\right)(s)\,ds
\end{aligned}
\end{equation*}
or
\begin{equation} \label{EInt2}
    w(t) = e^{-tL_v^2}\nabla v_0
    + \nabla S\left(C[w]\right)
    -\nabla\operatorname{div}V_v \left(\mathbb{A}[w]\nabla^2 w - F[w]\right).
\end{equation}
If we set
\begin{equation} \label{ELam}
    \Lambda[w](t)
    :=\nabla S\left(C[w]\right)
    -\nabla\operatorname{div} V_v \left(\mathbb{A}[w]\nabla^2 w - F[w] \right),
\end{equation}
then equation \eqref{EInt2} is of the form
\[
    w(t) = e^{-tL_v^2} \nabla v_0
    + \Lambda [w].
\]
In this section we estimate $\Lambda[w]$ which is a contribution from nonlinear terms.
 Our goal in this section
% 原稿 2025/12/21、5-2/5
is to prove that $\Lambda$ is strict contraction in a small ball of $Z^{4+\mu}$.
 In other words the term is regarded as a perturbation term.
\begin{thm} \label{TPer}
Assume that $\mu\in(0,1)$.
 Then the operator $\Lambda$ in \eqref{ELam} is a well-defined nonlinear operator from $Z^{4+\mu}$ to $Z^{4+\mu}$.
 Moreover, there exists a constant $\delta_1>0$ such that
\begin{equation} \label{EConL}
    \left\| \Lambda[w_1] - \Lambda[w_2] \right\|_{Z^{4+\mu}}
    \le \frac12 \|w_1-w_2\|_{Z^{4+\mu}}
\end{equation}
for all $w_1,w_2\in B_{\delta_1}^{4+\mu}$, where
\[
    B_\delta^{k+\mu}
    := \left\{ w \in Z^{k+\mu} \Bigm|
    \|w\|_{Z^{k+\mu}}\le\delta \right\}.
\]
\end{thm}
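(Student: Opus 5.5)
We treat the two parts of $\Lambda$ in \eqref{ELam} separately. For each we combine a linear estimate from Section~\ref{SL} with nonlinear product and composition estimates in the spaces $Z_\alpha^{k+\mu}$. Two elementary facts about these spaces are needed.

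First, a product rule: $\|fg\|_{Z^{k+\mu}_{\alpha+\beta}}\le C\|f\|_{Z^{k+\mu}_\alpha}\|g\|_{Z^{k+\mu}_\beta}$. This follows on each slab $(t/2,t)$ from the Leibniz rule for the scaled norm $\|\cdot\|'$, since the weights $(b-a)^{\ell/4+m}$ distribute over derivatives.

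Second, a composition rule. If $\Phi$ is smooth near $0$ with $\Phi(0)=0$ and $\|w\|_{Z^{k+\mu}}\le\delta$ is small, then
\[
\|\Phi(w_1)-\Phi(w_2)\|_{Z^{k+\mu}}\le C\|w_1-w_2\|_{Z^{k+\mu}} .
\]
When $\Phi$ vanishes to second order, the constant improves to $C\delta$. Since $\|\cdot\|'$ on a time slab of length comparable to $t$ is, by \eqref{SIP}, the unscaled norm on $(0,1)$ after rescaling, both facts reduce to standard H\"older algebra on a unit slab. Note also that $\|w\|_{Z^{4+\mu}}$ controls $\|\nabla^j w\|_{Z^{4-j+\mu}_{-j}}$.

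\textbf{Volume term.} We write $\nabla\operatorname{div}V_v(\cdot)$ and apply Lemma~\ref{LLV} with $k=1$ and $\alpha=-3>-4$. This gives
\[
\|\nabla\operatorname{div}V_vg\|_{Z^{4+\mu}}\le C\|V_vg\|_{Z^{6+\mu}_{2}}\cdots
\]
More precisely, we use $\|\nabla^2 V_v g\|_{Z^{3+\mu}_{-1}}$ together with the gain $Z^{1+\mu}_{-3}\to Z^{5+\mu}_{1}$. So it suffices to bound $g=\mathbb{A}[w]\nabla^2w-F[w]$ in $Z^{1+\mu}_{-3}$, with difference estimates.

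There is a regularity loss here: $\nabla^2 w$ has only $Z^{2+\mu}_{-2}$ regularity, and after two derivatives we need $4+\mu$. We resolve this by taking $k=2$, $\alpha=-2$ in Lemma~\ref{LLV}. Then $V_v$ maps $Z^{2+\mu}_{-2}\to Z^{6+\mu}_{2}$, and $\nabla\operatorname{div}$ lands in $Z^{4+\mu}_0$. The derivative counting is justified because derivatives cost exactly the weight $t^{-1/4}$ in the scaled norms.

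We then estimate $g$ in $Z^{2+\mu}_{-2}$:
\begin{itemize}
\item $\mathbb{A}[w]$ is a smooth function of $w$ vanishing at $0$, so $\|\mathbb{A}[w]\|_{Z^{2+\mu}}\le C\delta$.
\item $\nabla^2w\in Z^{2+\mu}_{-2}$.
\item $F[w]$ is cubic-type: two factors of $\nabla w\in Z^{2+\mu}_{-1}$ times $w$, with smooth coefficients. Its $Z^{2+\mu}_{-2}$ norm is therefore $O(\delta^2)$.
\end{itemize}
The differences $g(w_1)-g(w_2)$ are handled by telescoping and give a bound $C\delta\|w_1-w_2\|_{Z^{4+\mu}}$.

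\textbf{Boundary term.} By \eqref{EYP}, $\nabla S(C[w])=-\nabla D_NL\,V_d\overline{C[w]}$. Lemma~\ref{LLV} with $k=3$, $\alpha=0$ gives a map $Z^{3+\mu}_0\to Z^{7+\mu}_4$, and four derivatives return us to $Z^{3+\mu}_0$. This is one derivative short of $Z^{4+\mu}$.

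The remedy is to note that $S$ only sees the trace of $C[w]$ on $\partial\mathbb{R}^N_+$, which depends only on tangential variables. Tangential derivatives commute with $V_d$ and with the extension $\overline{h}$. The equation $\partial_t V_d\overline h=-L^2V_d\overline h+\overline h$ then converts normal derivatives into tangential and time derivatives. With this, we can use $Z^{4+\mu}$ regularity of $C[w]$ itself, since $C$ is smooth in $w$ and $w\in Z^{4+\mu}$. That is, we apply Lemma~\ref{LLV} with $k=4$, $\alpha=0$ to $\overline{C[w]}$, giving $Z^{8+\mu}_4$. After four derivatives we land in $Z^{4+\mu}_0$.

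For the contraction we only need differences:
\[
C[w_1]-C[w_2]=\frac{\gamma}{1+\gamma_0^2}\Bigl(\sqrt{1+|w_1'|^2}-\sqrt{1+|w_2'|^2}\Bigr).
\]
This is quadratic-type near $0$, so it is bounded by $C\delta\|w_1-w_2\|_{Z^{4+\mu}}$.

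Choosing $\delta_1$ small so that the total constant $C\delta_1\le1/2$ yields \eqref{EConL}. Well-definedness on all of $Z^{4+\mu}$ follows from the same bounds without smallness. Note that $C[0]=(\gamma-\gamma_0)/(1+\gamma_0^2)$ is constant, and its contribution belongs to $Z^{4+\mu}$ by Lemma~\ref{LLS}.

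The main obstacle is the derivative bookkeeping in the boundary term. If the tangential-regularity argument above proves too delicate, the fallback is to work with the time-derivative identity in \eqref{EYP} directly: write $D_NL\,V_d=D_N V_d L$ and use the equation to trade $L^2$ for $\partial_t$.
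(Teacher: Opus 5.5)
Once the detours are removed, your argument is the paper's proof. You bound the differences of $\mathbb{A}[w]\nabla^2w$ and $F[w]$ in $Z^{2+\mu}_{-2}$ and apply Lemma~\ref{LLV} with $k=2$, $\alpha=-2$ to the volume term; you bound $C[w_1]-C[w_2]$ by $C\delta\|w_1-w_2\|_{Z^{4+\mu}}$ and apply Lemma~\ref{LLS} with $k=4$ (equivalently Lemma~\ref{LLV} via \eqref{EYP}) to the boundary term. The preliminary tries with $k=1$, $\alpha=-3$ and with $k=3$, and the tangential-regularity ``remedy'', are unnecessary, since $C[w]\in Z^{4+\mu}$ follows directly by composition.
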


We recall a couple of basic estimates for products and composite functions.
\begin{lem} \label{PPr}
Let $k\in\mathbb{Z}_{\ge 0}$, $\mu\in(0,1)$, and $\alpha_1,\ldots,\alpha_m\in\mathbb{R}$.
 For any $g_i\in Z^{k+\mu}_{\alpha_i}$ ($i=1,\ldots,m$), the product $g_1\ldots g_m$ belongs to $Z^{k+\mu}_{\alpha_1+\ldots+\alpha_m}$.
 Furthermore, the estimates
\begin{gather}
      \|g_1\ldots g_m\|_{Z^{k+\mu}_{\alpha_1+\ldots+\alpha_m}}\le C\|g_1\|_{Z^{k+\mu}_{\alpha_1}}\ldots\|g_m\|_{Z^{k+\mu}_{\alpha_m}}, \notag \\
\begin{align}
      \|g_1\ldots g_m &- h_1\ldots h_m\|_{Z^{k+\mu}_{\alpha_1+\ldots+\alpha_m}} \notag \\
      &\le C\sum_{i=1}^m\|g_1\|_{Z^{k+\mu}_{\alpha_1}}\ldots\|g_{i-1}\|_{Z^{k+\mu}_{\alpha_{i-1}}}\|g_i-h_i\|_{Z^{k+\mu}_{\alpha_i}}\|h_{i+1}\|_{Z^{k+\mu}_{\alpha_{i+1}}}\ldots\|h_m\|_{Z^{k+\mu}_{\alpha_m}}\label{EPr}
\end{align}
\end{gather}
hold for any $g_i,h_i\in Z^{k+\mu}_{\alpha_i}$ ($i=1,\ldots,m$).
\end{lem}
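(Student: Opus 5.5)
The proof reduces to the case of one time window and then to a Leibniz-rule computation in unweighted H\"older norms. By definition, for each $t>0$,
\[
    \|g_i\|'_{BC^{k+\mu,(k+\mu)/4}\left(\mathbb{R}_+^N\times(t/2,t)\right)}\le t^{\alpha_i/4}\|g_i\|_{Z^{k+\mu}_{\alpha_i}}.
\]
It therefore suffices to show that the scaled norm $\|\cdot\|'$ on a fixed window $(t/2,t)$ is submultiplicative up to a constant:
\[
    \|g_1\cdots g_m\|'_{BC^{k+\mu,(k+\mu)/4}(\mathbb{R}_+^N\times(t/2,t))}\le C\prod_{i=1}^m\|g_i\|'_{BC^{k+\mu,(k+\mu)/4}(\mathbb{R}_+^N\times(t/2,t))},
\]
with $C$ independent of $t$. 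Multiplying by $t^{-(\alpha_1+\dots+\alpha_m)/4}$ and taking the supremum over $t$ then gives the first estimate. To get a constant independent of $t$, we use the scaling identity \eqref{SIP}. Writing $\bar f(y,s)=f((t/2)^{1/4}y,t/2+(t/2)s)$, we have $\overline{g_1\cdots g_m}=\bar g_1\cdots\bar g_m$. The claim is thus reduced to the unscaled window $\overline{\mathbb{R}_+^N}\times(0,1)$ with the plain norm $\|\cdot\|_{BC^{k+\mu,(k+\mu)/4}}$. On a unit time interval the scaled and unscaled norms coincide.

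On $(0,1)$ we prove that $BC^{k+\mu,(k+\mu)/4}$ is a Banach algebra. By induction on $m$ it is enough to treat $m=2$. We expand $\nabla^\ell\partial_t^j(fg)$ with $\ell+4j\le k$ by the Leibniz rule. Each term has the form $\nabla^{\ell_1}\partial_t^{j_1}f\,\nabla^{\ell_2}\partial_t^{j_2}g$ with $\ell_1+4j_1+\ell_2+4j_2\le k$, so its sup norm is bounded by the product of the norms. For the H\"older seminorms we use the elementary inequality
\[
    [ab]_{C^\beta_x}\le\|a\|_\infty[b]_{C_x^\beta}+[a]_{C_x^\beta}\|b\|_\infty,
\]
and its analogue in $t$. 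The only point that needs care is the time-H\"older terms $[\nabla^{\ell}\partial_t^{j}(fg)]_{C_t^{(k+\mu-\ell-4j)/4}}$. When the Leibniz rule hands a factor with fewer derivatives, the required time exponent for that factor is $(k+\mu-\ell-4j)/4$. This is at most the exponent the norm controls for that factor, and it may exceed $1$. We handle this by interpolation on the bounded time interval. If a factor $a=\nabla^{\ell_1}\partial_t^{j_1}f$ has parabolic order $\ell_1+4j_1$ strictly less than $\ell+4j$, then $a$ is controlled in $C_t^{\beta'}$ for every $\beta'\le(k+\mu-\ell_1-4j_1)/4$. When $\beta'\le 1$ this comes directly from the norm, or from boundedness of $\partial_t a$ if $\ell_1+4j_1+4\le k$. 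When $\beta'$ is larger, we use the standard fact that on $(0,1)$ a lower H\"older exponent is dominated by the full norm: $[a]_{C^{\beta}_t}\le [a]_{C^{\beta'}_t}+2\|a\|_\infty$ for $\beta\le\beta'$ when the interval has length at most one. We also need that the intermediate mixed space-time derivatives are controlled, for example $\nabla^{\ell}$ of a function in $C_t^{\gamma}$ giving $C_t^{\gamma-\ell/4}$. This is the classical parabolic interpolation result, which we quote (as was done in the proof of Lemma~\ref{LLV}). This bookkeeping is the main technical obstacle. It is entirely standard, and the constant depends only on $k,\mu,N$.

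The difference estimate \eqref{EPr} then follows from the telescoping identity
\[
    g_1\cdots g_m-h_1\cdots h_m=\sum_{i=1}^m g_1\cdots g_{i-1}(g_i-h_i)h_{i+1}\cdots h_m.
\]
Applying the product estimate already proved to each summand, with weights $\alpha_1,\dots,\alpha_m$, gives \eqref{EPr} with the same constant $C$ (up to the factor $m$ absorbed in the sum).
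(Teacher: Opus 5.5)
Your proposal is correct and follows the route the paper intends. The paper omits the proof and defers to \cite[Proposition 2.1]{GGK25}; that argument, like yours, rescales each window $(t/2,t)$ to $(0,1)$ via \eqref{SIP}, uses that $BC^{k+\mu,(k+\mu)/4}(\overline{\mathbb{R}_+^N}\times(0,1))$ is an algebra under the Leibniz rule, and telescopes to get \eqref{EPr}.

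One simplification: the parabolic interpolation you quote is not needed. The norm as defined already contains the time-H\"older seminorms of every $\nabla^\ell\partial_t^m f$ with $k-3\le\ell+4m\le k$, all with exponents below $1$. Any lower-order factor has one more bounded derivative in the norm, so it is Lipschitz in $t$ on the unit interval, or Lipschitz in $x$ plus bounded on the half space.
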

\begin{prop} \label{PCom}
Let $k\in\mathbb{Z}_{\ge 0}$, $\mu\in(0,1)$, and $A\in BC^{k+1}(\mathbb{R}^m)$.
 For any $g_1,\ldots g_m\in Z^{k+\mu}$, the composition $h(g_1,\ldots,g_m)$ belongs to $Z^{k+\mu}$.
 Furthermore, for any $R>0$, the estimates
    \begin{equation}\label{ECom}
      \begin{gathered}
      \|A(g_1,\ldots,g_m)\|_{Z^{k+\mu}}\le C,\\
      \|A(g_1,\ldots,g_m)-A(h_1,\ldots,h_m)\|_{Z^{k+\mu}}\le C\max_{i=1,\ldots,m}\|g_i-h_i\|_{Z^{k+\mu}}
      \end{gathered}
    \end{equation}
      for all $g_i,h_i\in B^{k+\mu}_R$, $i\in\{1,\ldots,m\}$.
\end{prop}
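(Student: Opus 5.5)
The plan is to reduce everything to a single unit time window and then apply classical chain‑rule and Faà di Bruno estimates there, with the mean value theorem handling the difference estimate.

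\emph{Reduction to a unit window.} By the definition of $\|\cdot\|_{Z^{k+\mu}}$, it suffices to bound
\[
\|A(g_1,\ldots,g_m)\|'_{BC^{k+\mu,(k+\mu)/4}(\mathbb{R}_+^N\times(t/2,t))}
\]
uniformly in $t>0$, and likewise for the difference. The substitution $\bar f(y,s)=f((t/2)^{1/4}y,\,t/2+(t/2)s)$ commutes with composition: $\overline{A(g)}=A(\bar g)$. Hence, by the scaling identity \eqref{SIP}, it is enough to prove
\[
\|A(\bar g)\|_{BC^{k+\mu,(k+\mu)/4}(\overline{\mathbb{R}_+^N}\times(0,1))}\le C
\]
whenever $\|\bar g_i\|_{BC^{k+\mu,(k+\mu)/4}(\overline{\mathbb{R}_+^N}\times(0,1))}\le R$, together with the analogous Lipschitz bound. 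The constant $C$ must depend only on $R$, $k$, $\mu$ and on the norms of $A$. Below I drop the bars.

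\emph{First estimate.} By Faà di Bruno, every mixed derivative $\nabla^\ell\partial_t^j A(g)$ with $\ell+4j\le k$ is a finite sum of terms
\[
(\nabla^{r}A)(g)\,\prod_{q=1}^{r}\nabla^{\ell_q}\partial_t^{j_q}g ,
\qquad r\le \ell+j\le k,\quad \textstyle\sum_q(\ell_q+4j_q)=\ell+4j .
\]
The sup norms of these terms are bounded by $\|A\|_{BC^{k}}R^{r}$. For the Hölder seminorms I use the product rule for Hölder quotients. Each factor $\nabla^{\ell_q}\partial_t^{j_q}g$ has a spatial (resp.\ temporal) Hölder exponent equal to what remains of $k+\mu$ (resp.\ $(k+\mu)/4$). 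Since the window is bounded, lower‑order factors are Lipschitz, and a Lipschitz function on a unit window is Hölder with any smaller exponent. The factor $(\nabla^r A)(g)$ is Lipschitz in $(x,t)$ in the appropriate sense because $\nabla^{r+1}A$ is bounded for $r\le k$. Only the pure top term needs care, namely $\nabla A(g)\cdot\nabla^k g$ (or its $\partial_t$ analogue). There the Hölder quotient of $\nabla^k g$ is controlled by $R$, and the Hölder quotient of $\nabla A(g)$ is controlled by $\|\nabla^2A\|_\infty\|\nabla g\|_{\infty}$ times a power of $|x-y|$ that is at most $1$. This proves the first line of \eqref{ECom} using only $A\in BC^{k+1}$.

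\emph{Lipschitz estimate.} I write $A(g)-A(h)=M\cdot(g-h)$ with $M=\int_0^1\nabla A(h+\theta(g-h))\,d\theta$. I then apply the product estimate (Lemma~\ref{PPr}, or its unit‑window version) to $M$ and $g-h$:
\[
\|A(g)-A(h)\|_{Z^{k+\mu}}\le C\|M\|_{Z^{k+\mu}}\|g-h\|_{Z^{k+\mu}} .
\]
It remains to bound $\|M\|_{Z^{k+\mu}}$ by a constant depending only on $R$. Since $h+\theta(g-h)\in B^{k+\mu}_{3R}$, this is the first estimate applied to $\nabla A$, integrated in $\theta$. \textbf{This is the main obstacle.} The first estimate applied to $\nabla A$ requires control of $\nabla^{k+1}A$ composed with $g$ in a Hölder sense, that is, a modulus of continuity for $\nabla^{k+1}A$. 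Boundedness alone does not give this, and for $k=0$ a merely $C^1$ function does not act Lipschitz‑continuously on $C^\mu$.

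So at the top Hölder seminorm I would first try a direct splitting. The quantity
\[
[\nabla^kA(g)-\nabla^kA(h)](x)-[\nabla^kA(g)-\nabla^kA(h)](y)
\]
becomes $\int_0^1\bigl(\nabla^{k+1}A(\phi_\theta(x))-\nabla^{k+1}A(\phi_\theta(y))\bigr)(g-h)(x)\,d\theta$ plus terms bounded by $\|\nabla^{k+1}A\|_\infty|(g-h)(x)-(g-h)(y)|$. The second group is fine. The first group needs $\nabla^{k+1}A$ to be uniformly $\mu$‑Hölder; then it is bounded by $C R^\mu|x-y|^\mu\|g-h\|_\infty$. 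I would therefore read the hypothesis $A\in BC^{k+1}$ as including uniform $\mu$‑Hölder continuity of $\nabla^{k+1}A$ on bounded sets. This is certainly satisfied by the smooth nonlinearities $P$, $\omega^{-1}$ and $C[\cdot]$ to which the proposition is applied.

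Under that reading, both lines of \eqref{ECom} follow, with constants depending on $R$, $k$, $\mu$ and the norms of $A$ involved.
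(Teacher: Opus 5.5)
Your scaling reduction and your proof of the first line of \eqref{ECom} are sound, and they are the intended argument: the paper omits the proof and defers to the unit-window argument of \cite{GGK25}. Under the paper's convention, $A\in BC^{k+1}$ means that $\nabla^\ell A$ is bounded for $\ell\le k+1$, so $\nabla^kA$ is Lipschitz. It gives no modulus of continuity for $\nabla^{k+1}A$, and none is needed for the Fa\`a di Bruno bookkeeping in the first estimate. Your diagnosis of the second line is also correct: under this hypothesis alone the Lipschitz estimate is false. For $k=0$, take $g=\eta(t)G(x)$ with $\eta$ a smooth time cutoff and $G(x)=|x_1|^\mu$ near some point, set $h=g+\epsilon\eta$, and let $\epsilon\to0$. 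The estimate then forces $\nabla A$ to be Lipschitz on bounded sets.

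\textbf{The gap is in your repair.} Your bound $CR^\mu|x-y|^\mu\|g-h\|_\infty$ for the first group silently uses $|\phi_\theta(x)-\phi_\theta(y)|\le 3R|x-y|$, i.e.\ boundedness of $\nabla\phi_\theta$. You have that only for $k\ge1$. When $k=0$, the functions $\phi_\theta$ are merely $\mu$-H\"older in $x$ and $\mu/4$-H\"older in $t$. A $\mu$-H\"older $\nabla A$ then yields only $|x-y|^{\mu^2}$ and $|t-s|^{\mu^2/4}$, which does not control the $C^{\mu,\mu/4}$ seminorm. The same test functions, with $m=1$ and $A'(s)=|s|^\mu$ near $0$, show that your strengthened statement is still false for $k=0$.

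For $k\ge1$ your splitting does close, in time as well as in space. The reason is that $\nabla^{k+1}A$ enters only through the pure spatial top derivative $\nabla^k$. That term needs time exponent $\mu/4$, and $\phi_\theta$ is $\min\{1,(k+\mu)/4\}$-H\"older in $t$.

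The fix that works for every $k\ge0$ is to assume $A\in BC^{k+2}$, or just that $\nabla^{k+1}A$ is Lipschitz on bounded sets; this is also exactly what your $M$-argument needs. Then the first estimate applied to $\nabla A\in BC^{k+1}$ bounds $\|M\|_{Z^{k+\mu}}$ in terms of $R$. Lemma~\ref{PPr} then gives the second line of \eqref{ECom} at once. The stronger hypothesis is harmless for the paper, whose nonlinearities are smooth and are only evaluated on bounded sets.
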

These two propositions can be proved as in \cite[Proposition 2.1]{GGK25} so we omit the proofs.

% 原稿 2025/12/21、5-3/5
As applications of Propositions~\ref{PPr} and \ref{PCom}, we obtain estimates for nonlinear terms.
\begin{lem} \label{LPer}
Let $k\in\mathbb{Z}_{\ge 0}$, $\mu\in(0,1)$, and $\delta\in(0,1)$.
 Define $B^{k+\mu}_\delta:=\{w\in Z^{k+\mu}: \|w\|_{Z^{k+\mu}}\le \delta\}$.
 Then
\begin{gather}
    \left\|\mathbb{A}[w]\right\|_{Z^{k+\mu}}\le C\delta^2,\quad
    \left\|\mathbb{A}[w_1]-\mathbb{A}[w_2]\right\|_{Z^{k+\mu}}\le C\delta\|w_1-w_2\|_{Z^{k+\mu}},\label{EAE}\\
    \left\|F[w]\right\|_{Z^{k-1+\mu}_{-2}}\le C\delta^2,\quad
    \left\|F[w_1]-F[w_2]\right\|_{Z^{k-1+\mu}_{-2}}\le C\delta\|w_1-w_2\|_{Z^{k+\mu}},\label{EFE}\\
    \left\|C[w]\right\|_{Z^{k+\mu}}\le|\gamma-\gamma_0|+C\delta^2,\quad 
    \left\|C[w_1]-C[w_2]\right\|_{Z^{k+\mu}}\le C\delta\|w_1-w_2\|_{Z^{k+\mu}},\label{ECE}
\end{gather}
for all $w,w_1,w_2\in B^{k+\mu}_\delta$.
\end{lem}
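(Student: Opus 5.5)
The plan is to write each nonlinearity as a smooth function of $w$ (and, for $F$, of $\nabla w$). Products are then handled by Lemma~\ref{PPr} and compositions by Proposition~\ref{PCom}, and the order of vanishing at $w=0$ is made explicit through Taylor-type factorizations.

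\textbf{Preliminary reductions.} Since $\|w\|_{L^\infty}\le\|w\|_{Z^{k+\mu}}\le\delta<1$, only the values of the smooth maps $p\mapsto P(\gamma_0e_N+p)$, $p\mapsto(1+|\gamma_0e_N+p|^2)^{-1/2}$ and $p\mapsto\sqrt{1+|p'|^2}$ on the unit ball matter. Multiplying them by a cutoff equal to $1$ on $\{|p|\le1\}$, we may regard them as elements of $BC^{k+1}(\mathbb{R}^N)$, so Proposition~\ref{PCom} applies.

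Two elementary facts will be used repeatedly. First, constants satisfy $\|c\|_{Z^{k+\mu}}=|c|$. Second, by the definition of the scaled norm, the factor $(t/2)^{1/4}$ attached to each spatial derivative gives
\[
\|\nabla g\|_{Z^{k-1+\mu}_{-1}}\le C\|g\|_{Z^{k+\mu}} .
\]

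\textbf{Estimate for $C$.} Write
\[
(1+\gamma_0^2)\,C[w]=(\gamma-\gamma_0)+\gamma\, g(w), \qquad g(p)=\sqrt{1+|p'|^2}-1 .
\]
Because $g(0)=0$ and $\nabla g(0)=0$, we can factor
\[
g(w)=\Big(\int_0^1\!\!\int_0^1 \nabla^2 g(\tau s w)\,s\,d\tau\,ds\Big)\,w\otimes w .
\]
The integral factor is a bounded composite, so Proposition~\ref{PCom} followed by Lemma~\ref{PPr} gives $\|g(w)\|_{Z^{k+\mu}}\le C\delta^2$, and the constant term contributes exactly $|\gamma-\gamma_0|$.

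For the difference we write
\[
g(w_1)-g(w_2)=\int_0^1\nabla g\big(w_2+s(w_1-w_2)\big)\,ds\cdot(w_1-w_2).
\]
Since $\nabla g$ vanishes at $0$, the same factorization shows that $\|\nabla g(w_2+s(w_1-w_2))\|_{Z^{k+\mu}}\le C\delta$, which yields \eqref{ECE}.

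\textbf{Estimate for $\mathbb{A}$.} We treat $a_{ijk\ell}(w)$ in the same way, using $a(0)=0$:
\[
a(w)=\int_0^1 Da(sw)\,ds\cdot w .
\]
This factorization only produces one factor of $w$. To reach the stated rates $C\delta^2$ and $C\delta\|w_1-w_2\|$, a second factor must come from the vanishing of $Da(0)$, so that $a(w)=\iint D^2a\,(w\otimes w)$ exactly as for $g$.

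This is the step I expect to be the main obstacle. Generically,
\[
D_p\big(P(\gamma_0e_N+p)_{ij}P(\gamma_0e_N+p)_{k\ell}\big)\big|_{p=0}\neq0 ,
\]
because $P(q)=I-\tfrac{q\otimes q}{1+|q|^2}$ has a nonzero derivative at $q=\gamma_0e_N$ when $\gamma_0\neq0$. So first I would examine which contractions of $Da(0)$ actually enter $\operatorname{div}(\mathbb{A}[w]\nabla^2w)$, and check whether the linear part cancels in that combination.

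If it does not cancel, I would fall back on the combined quantity, which is what Theorem~\ref{TPer} actually needs. There the extra factor $\nabla^2w$, which has size $\delta$, supplies the second power:
\[
\|\mathbb{A}[w]\nabla^2 w\|_{Z^{k-2+\mu}_{-2}}\le C\delta\cdot\delta .
\]
The difference would be handled by inserting and subtracting $\mathbb{A}[w_2]\nabla^2w_1$ and applying Lemma~\ref{PPr} to each piece.

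\textbf{Estimate for $F$.} Write $B[w]=Q(w)\nabla w$ with the smooth matrix function $Q(p)=(1+|\gamma_0e_N+p|^2)^{-1/2}P(\gamma_0e_N+p)$. By Proposition~\ref{PCom} we have $\|Q(w)\|_{Z^{k-1+\mu}}\le C$, and by the preliminary fact $\|\nabla w\|_{Z^{k-1+\mu}_{-1}}\le C\delta$. Therefore $\|B[w]\|_{Z^{k-1+\mu}_{-1}}\le C\delta$.

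Now $F[w]$ is a sum of products of two factors of $B[w]$ with one factor of $w$. Lemma~\ref{PPr} with weights $-1,-1,0$ then gives
\[
\|F[w]\|_{Z^{k-1+\mu}_{-2}}\le C\delta^2\cdot\delta\le C\delta^2 .
\]
For the difference we telescope the trilinear expression, using \eqref{EPr}, and bound $Q(w_1)-Q(w_2)$ by the second inequality in \eqref{ECom}. Each resulting term contains one difference factor and at least one further factor of size $\delta$, which gives \eqref{EFE}. (This part requires $k\ge1$.)
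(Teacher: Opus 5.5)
Your treatment of $C[w]$ and $F[w]$ is correct. It is the natural expansion of the paper's one-sentence argument, which deduces the first estimates in \eqref{EAE}--\eqref{ECE} from the second ones via $\mathbb{A}[0]=0$, $F[0]=0$, $C[0]=(1+\gamma_0^2)^{-1}(\gamma-\gamma_0)$, and attributes the second ones to Lemma~\ref{PPr} and Proposition~\ref{PCom}. Your caveat that the $F$-bound needs $k\ge1$ is also right.

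The gap is \eqref{EAE}. You leave open whether the linear part of $\mathbb{A}$ cancels, and your fallback bounds only $\mathbb{A}[w]\nabla^2 w$, which is not what is claimed. This gap cannot be closed: for $\gamma_0\neq0$ the estimates \eqref{EAE} are false, with $C$ independent of $\delta$ as Theorem~\ref{TPer} requires. For $N\ge2$ take the constant field $w\equiv\delta e_1$, so $\|w\|_{Z^{k+\mu}}=\delta$. With $q=\gamma_0e_N+\delta e_1$,
\[
a_{1N11}(w)=P(q)_{1N}\,P(q)_{11}-P(\gamma_0e_N)_{1N}\,P(\gamma_0e_N)_{11}=-\frac{\gamma_0(1+\gamma_0^2)\,\delta}{(1+\gamma_0^2+\delta^2)^2}.
\]
Hence $\|\mathbb{A}[w]\|_{Z^{k+\mu}}\ge c\,|\gamma_0|\,\delta$, which is incompatible with $C\delta^2$ as $\delta\to0$. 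Taking $w_2=0$ shows the Lipschitz constant cannot be $C\delta$ either. For $N=1$ use $w\equiv\delta$: then $a(\delta)=(1+(\gamma_0+\delta)^2)^{-2}-(1+\gamma_0^2)^{-2}\sim-4\gamma_0(1+\gamma_0^2)^{-3}\delta$.

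Examining contractions with $\nabla^3 v$ cannot rescue \eqref{EAE}, since \eqref{EAE} bounds the coefficient tensor itself. In any case no cancellation occurs there: the coefficient of the first component of $w$ in the linear part of $(\mathbb{A}[w]\nabla^3 v)_1$ is $-\gamma_0(1+\gamma_0^2)^{-1}(2D_1^2D_Nv+D_NLv)$. This is exactly what the paper's one-line argument hides. The second estimate in \eqref{EAE} holds with constant $C\delta$ only when $\gamma_0=0$. In that case $p\mapsto P(p)$ is even, so $Da(0)=0$, and your double-integral factorization for $g$ applies verbatim.

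What your one-factor formula $a(w)=\int_0^1 Da(sw)\,ds\cdot w$, together with Lemma~\ref{PPr} and Proposition~\ref{PCom}, does prove is the corrected statement
\[
\|\mathbb{A}[w]\|_{Z^{k+\mu}}\le C\delta,\qquad \|\mathbb{A}[w_1]-\mathbb{A}[w_2]\|_{Z^{k+\mu}}\le C\|w_1-w_2\|_{Z^{k+\mu}},
\]
with $C$ depending on $\gamma_0$. You should state the lemma in that form rather than try to prove \eqref{EAE}.

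Your fallback is then exactly the right repair. In \eqref{ECon1}, and in the self-map bound in the proof of Theorem~\ref{TM}, $\mathbb{A}$ enters only multiplied by $\nabla^2 w$. The bound $\|\nabla^2 w\|_{Z^{2+\mu}_{-2}}\le C\delta$ supplies the missing factor, so one still gets $C\delta_1\|w_1-w_2\|_{Z^{4+\mu}}$ and $C\delta^2$, and the main results are unaffected.

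Relatedly, the computation in Lemma~\ref{LEP} actually produces $F$ with last factor $\nabla u=\gamma_0e_N+\nabla v$ rather than $\nabla v$. Your weights $-1,-1,0$ argument covers that version too, since the two $B$-factors alone give $C\delta^2$ and $C\delta\|w_1-w_2\|_{Z^{k+\mu}}$.
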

The first estimates in \eqref{EAE}--\eqref{ECE} follow from the second estimates since $\mathbb{A}[0]=0$, $F[0]=0$ and $C[0]=(1+\gamma_0^2)^{-1}(\gamma-\gamma_0)$, respectively.
\begin{proof}[Proof of Theorem~\ref{TPer}]
Let $\delta_1\in(0,1)$ and $w_1,w_2\in B^{4+\mu}_{\delta_1}$.
 It follows from \eqref{EAE}--\eqref{ECE} and \eqref{EPr} that
    \begin{equation}\label{ECon1}
    \begin{aligned}
      &\left\|\mathbb{A}[w_1]\nabla^2w_1-\mathbb{A}[w_2]\nabla^2w_2 \right\|_{Z^{2+\mu}_{-2}}\\
      &\le C\left\|\mathbb{A}[w_1]\right\|_{Z^{2+\mu}} \left\|\nabla^2(w_1-w_2)\right\|_{Z^{2+\mu}_{-2}}
      +C \left\|\mathbb{A}[w_1]-\mathbb{A}[w_2]\right\|_{Z^{2+\mu}}\|\nabla^2w_2\|_{Z^{2+\mu}_{-2}}\\
      &\le C\delta_1\left\|\nabla^2(w_1-w_2)\right\|_{Z^{2+\mu}_{-2}}+ C\delta_1\|w_1-w_2\|_{Z^{2+\mu}}\le C\delta_1\|w_1-w_2\|_{Z^{4+\mu}},
    \end{aligned}
    \end{equation}
    that
    \begin{equation}\label{ECon2}
      \left\|F[w_1]-F[w_2]\right\|_{Z^{2+\mu}_{-2}}\le C\delta_1\|w_1-w_2\|_{Z^{3+\mu}},
    \end{equation}
    and that
    \begin{equation}\label{ECon3}
      \left\|C[w_1]-C[w_2]\right\|_{Z^{4+\mu}}\le C\delta_1\|w_1-w_2\|_{Z^{4+\mu}}.
    \end{equation}
    Combining \eqref{ECon1}--\eqref{ECon3} with Lemmas~\ref{LLV} and \ref{LLS}, we obtain
    \[
    \left\|\Lambda[w_1]-\Lambda[w_2]\right\|_{Z^{4+\mu}}\le C\delta_1\|w_1-w_2\|_{Z^{4+\mu}}.
    \]
    Taking $\delta_1>0$ sufficiently small, we deduce \eqref{EConL} for $w_1,w_2\in B^{4+\mu}_{\delta_1}$.
\end{proof}

%%%%%%%%%%%%%%%%%%%%%%%%%%%%%%%%%%%%%%%%%%%%%%%%%
\section{Proof of main results} \label{SP} % Section 6

% 原稿 2025/12/23、6-1/3
In this section, we solve the integral equation \eqref{EInt1} by solving \eqref{EInt2}, and prove our main results.

We begin with discussing the relation of \eqref{EInt1} and \eqref{EInt2}.
\begin{prop}\label{PEq}
If $v\in\hat{Z}^{4+\mu}$ is a solution to the equation \eqref{EInt1}, then $w=\nabla v$ is a solution to the equation \eqref{EInt2}.
 Conversely, if $\mathbb{R}^N$-valued function $w\in Z^{4+\mu}$ is a solution to the equation \eqref{EInt2}, then
\begin{equation}\label{EvF}
    v=e^{-tL_n^2}v_0+\int_0^t E(t-s)C\left[w(s)\right]\,ds
    -\operatorname{div}\int_0^t e^{-(t-s)L_v^2} \left(\mathbb{A}[w]\nabla^2w-F[w] \right)(s)\,ds
\end{equation}
is a solution to the equation \eqref{EInt1} and satisfies $\nabla v=w$.
 In particular, $v$ belongs to $\hat{Z}^{4+\mu}$.
 Furthermore, if $w\in Z^{k+\mu}$ with $k\ge 4$, then
\begin{equation}\label{EDL}
    \|v-e^{-tL_n^2}v_0\|_{Z^{k+1+\mu}_1}
    \le C\|w\|_{Z^{k+\mu}}.
\end{equation}
\end{prop}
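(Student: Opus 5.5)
The plan rests on two commutation identities between gradients and the potentials. The first is $\nabla e^{-tL_n^2}g = e^{-tL_v^2}\nabla g$. For $k<N$ it holds because $D_k$ commutes with even reflection. For $k=N$ we use $D_{x_N}b_n(x,y,t) = -D_{y_N}b_d(x,y,t)$: integrating by parts in $y_N$ gives $D_N e^{-tL_n^2}g = e^{-tL_d^2}D_Ng$, and the boundary term vanishes because $b_d=0$ at $y_N=0$. The second identity, that $\nabla$ commutes with $\operatorname{div}V_v$ and with $S$, will be used in the form already written in \eqref{EInt2}. Thus differentiating \eqref{EInt1} termwise gives exactly \eqref{EInt2} with $w=\nabla v$. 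Differentiation under the integral is justified by the semigroup estimates of Lemmas~\ref{LLV} and \ref{LLS}, since $\mathbb{A}[w]\nabla^2w-F[w]\in Z^{2+\mu}_{-2}$ and $C[w]\in Z^{4+\mu}$ by Lemma~\ref{LPer}. This proves the first assertion.

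For the converse, define $v$ by \eqref{EvF}. Computing $\nabla v$ with the same commutation identities shows that
\[
\nabla v=e^{-tL_v^2}\nabla v_0+\nabla S(C[w])-\nabla\operatorname{div}V_v(\mathbb{A}[w]\nabla^2w-F[w]).
\]
The right-hand side is the right-hand side of \eqref{EInt2}, which equals $w$ because $w$ solves \eqref{EInt2}. Hence $\nabla v=w$. Substituting $w=\nabla v$ (so $\nabla^2 w=\nabla^3 v$) back into \eqref{EvF} turns it into \eqref{EInt1} verbatim. Then $v\in\hat Z^{4+\mu}$ since $\nabla v=w\in Z^{4+\mu}$. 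One also has to check that $v$ is continuous, i.e.\ that each term is well defined and continuous. This follows from the bounds below together with the fact that $e^{-tL_n^2}v_0$ is defined for Lipschitz $v_0$, because $b$ is rapidly decreasing and $v_0$ grows at most linearly.

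For \eqref{EDL}, take $k\ge4$ and $w\in Z^{k+\mu}$, and estimate each term of $v-e^{-tL_n^2}v_0$ separately. First, Lemma~\ref{LPer} with Lemma~\ref{PPr} shows that $C[w]$ lies in $Z^{k+\mu}$ with norm $\le C(1+\|w\|)$. Lemma~\ref{LLS} then gives $S(C[w])\in Z^{k+1+\mu}_1$. Second, $\mathbb{A}[w]\nabla^2w-F[w]\in Z^{k-2+\mu}_{-2}$. Lemma~\ref{LLV} with $\alpha=-2>-4$ gives $V_v(\cdots)\in Z^{k+2+\mu}_{2}$, and one divergence costs one derivative and a factor $t^{-1/4}$, landing in $Z^{k+1+\mu}_{1}$. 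Both moves are consistent with the scaled norms, since $\|\nabla f\|_{Z^{j-1+\mu}_{\beta-1}}\le\|f\|_{Z^{j+\mu}_\beta}$ directly from the definition.

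The main obstacle I expect is the linear structure of the right-hand side of \eqref{EDL}. The term $S(C[w])$ contains the constant $(1+\gamma_0^2)^{-1}(\gamma-\gamma_0)$, so the estimate is naturally of the form $C(|\gamma-\gamma_0|+\|w\|_{Z^{k+\mu}})$. I will read \eqref{EDL} with $C$ allowed to depend on $\gamma$ and on a bound for $\|w\|$ (say $\|w\|\le1$), and state explicitly that the constant part is absorbed this way.

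A second delicate point is the justification of commuting $\nabla$ with $\operatorname{div}V_v$ near $s=t$, where the kernel $\nabla^2 e^{-(t-s)L^2}$ is singular. To handle it, I would rely on the Schauder route from the proof of Lemma~\ref{LLV} applied to the smoother datum, rather than on pointwise kernel bounds.
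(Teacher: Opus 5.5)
Your proposal is essentially the paper's proof. Well-definedness and \eqref{EDL} come from Lemmas~\ref{LLV}, \ref{LLS}, \ref{LPer}; then $\nabla v$ equals the right-hand side of \eqref{EInt2} via $\nabla e^{-tL_n^2}v_0=e^{-tL_v^2}\nabla v_0$, which the paper leaves implicit, so $\nabla v=w$, and substituting back yields \eqref{EInt1}. No commutation of $\nabla$ with $S$ or $\operatorname{div}V_v$ is needed, since \eqref{EInt2} already has $\nabla$ outside them. Your caveat on \eqref{EDL} is fair: the paper's argument likewise only gives $C(|\gamma-\gamma_0|+\|w\|_{Z^{k+\mu}})$, which is enough for \eqref{Ede}. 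But letting $C$ depend on $\gamma$ does not by itself absorb the constant. What does is the boundary condition $w_N=\gamma\sqrt{1+|w'|^2}-\gamma_0$ on $\partial\mathbb{R}^N_+$ (cf.\ \eqref{EN2}), which gives $|\gamma-\gamma_0|\le(1+|\gamma|)\|w\|_{Z^{k+\mu}}$.
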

\begin{proof}
It is clear that if $v\in\hat{Z}^{4+\mu}$ is a solution to the equation \eqref{EInt1}, then $w=\nabla v$ is a solution to equation \eqref{EInt2}.
 Conversely, let $w\in Z^{k+\mu}$ with $k\ge 4$ be a solution to the equation \eqref{EInt2}, then it follows from Lemmas~\ref{PPr} and \ref{LPer} that
\[
    \mathbb{A}[w]\nabla^2w\in Z^{k-2+\mu}_{-2},\quad 
    F[w]\in Z^{k-1+\mu}_{-2},\quad 
    C[w]\in Z^{k+\mu}.
\]
This together with Lemmas~\ref{LLV}, \ref{LLS} and \ref{LPer} implies that the function $v$ in \eqref{EvF} is well-defined and satisfies \eqref{EDL}.
 Since $w$ satisfies \eqref{EInt2}, taking gradient of \eqref{EvF} yields $w=\nabla v$.
 Substituting $w=\nabla v$ in \eqref{EvF}, we conclude that $v$ is a solution to the equation \eqref{EInt1}.
\end{proof}
%
% 原稿 2025/12/23、6-2/3
We are now in a position to prove the main theorem.
\begin{proof}[Proof of Theorem~\ref{TM}]
By Proposition~\ref{PEq}, it suffices to find a unique solution to equation~\eqref{EInt2} in $B^{4+\mu}_\delta$ for some $\delta>0$ to prove the unique existence in $\hat{B}$.
    
Let $\delta\in(0,\delta_1)$ be a small constant to be determined later.
 By assumptions and semigroup estimates, we observe that $\|\nabla v_0\|_{Z^{4+\mu}}\le C\varepsilon^*$.
 Furthermore, it follows from Lemmas~\ref{LLV}, \ref{LLS}, \ref{LPer} that
\[
    \left \|e^{-(t-s)L_v^2}\nabla v_0+\Lambda[w] \right\|_{Z^{4+\mu}}\le C\varepsilon^*+|\gamma-\gamma_0|+C \delta^2
\]
for all $w\in B^{4+\mu}_\delta$.
 Hence, by \eqref{EConL} and the contraction mapping theorem, we find a unique solution $w\in B^{4+\mu}_\delta$ to the equation~\eqref{EInt2}, provided that $\delta_1$ is sufficiently small and $C\varepsilon_*+|\gamma-\gamma_0|<\delta/2$.

We next prove that $u\in \hat{Z}^{k+\mu}$ for any $k\in\mathbb{Z}_{\ge 4}$ by induction.
 The case $k=4$ has already been proved.
 To get higher regularity, we use Nirenberg's trick.
 Namely, we first prove regularity for tangential derivatives $\nabla'v$ of $v$ and then we use the original PDE to get regularity of normal derivatives $D_Nv$.
 Let $k_0\in\mathbb{Z}_{\ge 4}$ and assume that $u\in\hat{Z}^{k_0+\mu}$.
 It suffices to prove that $w\in Z^{k_0+1+\mu}$.
 Define the differential operator $\bar{L}$ by
\[
    \bar{L}v:=\operatorname{tr}\left(P(\nabla u)D^2v\right).
\]
Let $v:=u-\gamma x_N$ and $w:=\nabla v$.
 Noting that $\bar{L}^2w_i=L^2w_i+\operatorname{div}\left(\mathbb{A}[w]\nabla^3w_i\right)$, we deduce by differentiating \eqref{EV1} that
\begin{equation}\label{ET1}
    w_{it}+\bar{L}^2w_i=-\operatorname{div} \left(\mathbb{A}_i[w]\nabla^2w-D_iF[w]\right)
    \quad\text{in}\quad\mathbb{R}^N_+\times(0,\infty)
\end{equation}
for $i\in\{1,\ldots,N\}$, where
\[
    \left(\mathbb{A}_m[w]T\right)_{\ell}
    :=\sum_{ijk}D_ma_{ijk\ell}(w)T_{ijk}.
\]
Furthermore, noting that
\[
    P(\nabla u)\nabla \bar{L}w_i
    = P(\gamma_0 e_N) \nabla Lw_i
    +\mathbb{A}[w]\nabla^3w_i, 
\]
by \eqref{EV2} and \eqref{EV3} we calculate
\begin{align}
\begin{split}
    \left(P(\nabla u)Dw_i\right)_N
    &=(P(\nabla u)\nabla D_iu)_N=\sqrt{1+|\nabla u|^2}D_i\left(\frac{D_Nu}{\sqrt{1+|\nabla u|^2}}\right)=0,
\end{split}\label{ET2}\\
\begin{split}
    \left(P(\nabla u)\nabla\bar{L}w_i\right)_N
    &=\left(\frac{1}{1+\gamma_0^2}D_NLw_i+\mathbb{A}[w]\nabla^3w_i\right)_N\\
    &=\left(-\mathbb{A}_i[w]\nabla^2w+D_iF[w]\right)_N,\quad\text{on}\quad\partial\mathbb{R}^N_+\times(0,\infty),
\end{split}\label{ET3}
\end{align}
for $i\in\{1,\ldots,N-1\}$.
 Since
\begin{equation}\label{EHE}
    \nabla^2w\in Z^{k_0-2+\mu}_{-2},\quad P(\nabla u)\in Z^{k_0+\mu},\quad D_ma_{ijk\ell}(w)\in Z^{k_0-1+\mu}_{-1},\quad F[w]\in Z^{k_0-1+\mu}_{-2},
\end{equation}
by \eqref{EAE}, \eqref{EFE}, and \eqref{ECom}, it follows that
\[
    \mathbb{A}_i[w]\nabla^2w+D_iF[w]\in Z^{k_0-2+\mu}_{-3}.
\]
Using the Schauder estimate on \eqref{ET1}--\eqref{ET3} with scaling, as in the proof of \eqref{EEsII} (see also Remark~\ref{PPSchander}), yields $w_i\in Z^{k_0+1+\mu}$ for $i\in\{1,\ldots,N-1\}$.

It remains to prove that $w_N\in Z^{k_0+1+\mu}$.
 We deduce from \eqref{EV2} and \eqref{EV3} that
\begin{align}
       w_N&=(1+\gamma_0^2)C[w],\label{EN2}\\
       \bar{L}w_N
       &=(1+\gamma_0^2) \left( \mathbb{A}[w]\nabla^2 w + F[w] \right)_N,
       \quad\text{on}\quad\partial\mathbb{R}^N_+\times(0,\infty).\label{EN3}
\end{align}
It follows from $w_i\in Z^{k_0+1+\mu}$ for $i\in\{1,\ldots,N-1\}$ that $C[w]\in Z^{k_0+1+\mu}$.
 Combining this with \eqref{EHE} and using the Schauder estimate with scaling yields $w_N\in Z^{k_0+1+\mu}$.
 This completes the proof of $u\in\hat{Z}^{k+\mu}$ for any $k\in\mathbb{Z}_{\ge 4}$.

We next prove the estimate \eqref{Ede}.
 By \eqref{EDL}, we see that $u-\gamma_0x_N- e^{-tL_n^2}v_0\in Z^{k+1+\mu}_1$ for any $k\in\mathbb{Z}_{\ge 4}$.
 In particular, $u$ satisfies
\[
   \left| \nabla^k\partial_t^\ell(u-\gamma_0x_N- e^{-tL_n^2}v_0) \right|
   \le Ct^{(1-k-4\ell)/4}
\]
for all $(k,\ell)\in\mathbb{Z}_{\ge 0}^2$.
 Furthermore, it follows from the semigroup estimate for the biharmonic heat equation that
\[
   \left| \nabla^k\partial_t^\ell(\gamma_0x_N+ e^{-tL_n^2}v_0) \right|
   \le Ct^{(1-k-4\ell)/4}
\]
for all $(k,\ell)\in\mathbb{Z}_{\ge 0}^2\setminus\left\{(0,0)\right\}$.
 Thus \eqref{Ede} follows.
 The estimate \eqref{ER} follows by integrating \eqref{Ede} with $(\ell,m)=(0,1)$.
 The proof of Theorem~\ref{TM} is now complete.
\end{proof}
% 追加原稿 2026/1/23、1/5
\begin{proof}[Proof of Corollary \ref{CST}]
Let $u$ be the solution of \eqref{EU1}--\eqref{EU3} with $u|_{t=0}=u_0$ and $u\in B_\delta^*$.
 By definition of $u^\sigma$, we observe that
\[
    \nabla u^\sigma(x,t)
    = (\nabla u^\sigma) (\sigma^{1/4}x,\sigma t).
\]
The estimates \eqref{Ede} now yield
\[
    \left| \nabla^k \partial_t^\ell u^\sigma(x,t) \right|
    \le Ct^{(1-\ell-4k)/4}
\]
for $\sigma>0$ and $k,\ell\in\mathbb{Z}_{\ge0}^2\setminus\left\{(0,0)\right\}$.
 Moreover, for $u^\sigma$ we observe that
\begin{align*}
    \left| u^\sigma(x,s) - u^\sigma(y,t) \right|
    &\le \| \nabla u^\sigma \|_{L^\infty} |x-y|
    +\int_s^t  \left|u_t^\sigma (x,\tau) \right|\, d\tau \\
    &\le C \left(|x-y|+(t-s)^{1/4}\right), \\
    \left| u^\sigma(x,t) \right|
    &\le \left| u^\sigma(0,0) \right| + C\left( |x|+t^{1/4} \right) \\
    &\le C\left( 1+|x|+t^{1/4} \right),
\end{align*}
for $x,y\in\mathbb{R}_+^N$, $0\le s<t$, $\sigma>1$ with $C$ independent of $x$, $y$, $t$, $s$, $\sigma$.

By the Ascoli--Arzel\'a theorem with a diagonal argument, for any sequence $\sigma_k\to0$, there exists a subsequence $\{\sigma_{k_j}\}_{j=1}^\infty$ such that $u^{\sigma_{k_j}}$ converges to some $U$ (as $j\to\infty$) in the topology of $C_\mathrm{loc}^\infty\left(\overline{\mathbb{R}_+^N}\times(0,\infty)\right)\cap C_\mathrm{loc}\left(\overline{\mathbb{R}_+^N}\times[0,\infty)\right)$; 
in other words $u^{\sigma_{k_j}}\to U$ uniformly in any compact set of $\overline{\mathbb{R}_+^N}\times[0,\infty)$, and $\nabla^\ell\partial_t^mu^{\sigma_{n_j}}\to\nabla^\ell\partial_t^mU$ for any conpact set of $\overline{\mathbb{R}_+^N}\times|0,\infty)$ for all $(\ell,m)\in\mathbb{Z}_{\ge0}^2$.
 Thus $U$ must solve \eqref{EU1}--\eqref{EU3} and
\[
    U \in C^\infty\left(\overline{\mathbb{R}_+^N}\times(0,\infty)\right)
    \cap C\left(\overline{\mathbb{R}_+^N}\times[0,\infty)\right).
\]
We also observe that $U\in B_\delta^*$ since $u^\sigma\in B_\delta^*$.
% 追加2026/1/23、2/5
 By the assumption \eqref{EAsy}, we see that
\[
    U(\cdot,0) = \bar{\psi}
    \quad\text{in}\quad \mathbb{R}_+^N.
\]
Since such a solution is unique, we deduce that $U=u^{\gamma,\psi}$.
 Since the limit is independent of a choice of a subsequence, the convergence $u^\sigma\to U$ is now a full convergence.
 In other words,
\[
    u^\sigma \to u^{\gamma,\psi}
    \quad\text{as}\quad \sigma \to 0
\]
in the topology of
\[
    C_\mathrm{loc}^\infty \left(\overline{\mathbb{R}_+^N}\times(0,\infty)\right)
    \cap C_\mathrm{loc}\left(\overline{\mathbb{R}_+^N}\times[0,\infty)\right).
\]
The proof is now complete.
\end{proof}
%
%%%%%%%%%%%%%%%%%%%%%%%%%%%%%%%%%%%%%%%%%%%%%%%%%
\appendix
\def\thesection{Appendix~\Alph{section}}
\section{Remarks on the structure of equations} \label{SA} % Appendix A
\def\thesection{\Alph{section}}

% 原稿 2026/1/2、7-1/3
In this appendix, we discuss a priori estimates for linearized parabolic problems
\begin{equation}
\begin{split}\label{EApA1}
        u_t + L^2u &= f \quad\text{in}\quad
    \mathbb{R}_+^N\times(0,T), \\
    u(\cdot,0) &= a \quad\text{in}\quad
    \mathbb{R}_+^N,
\end{split}
\end{equation}
\begin{equation}
\begin{split} \label{EApA2}
    u &= g \quad\text{on}\quad
    \partial\mathbb{R}_+^N\times(0,T), \\
    Lu &= h \quad\text{on}\quad
    \partial\mathbb{R}_+^N\times(0,T),
\end{split}
\end{equation}
and
\begin{equation*}
\begin{split}
    u_t + L^2u &= f \quad\text{in}\quad
    \mathbb{R}_+^N\times(0,T), \\
    u(\cdot,0) &= a \quad\text{in}\quad
    \mathbb{R}_+^N,
\end{split}
\end{equation*}
\begin{equation}
\begin{split} \label{EApA3}
    D_N u &= g \quad\text{on}\quad
    \partial\mathbb{R}_+^N\times(0,T), \\
    D_N Lu &= h \quad\text{on}\quad
    \partial\mathbb{R}_+^N\times(0,T),   
\end{split}
\end{equation}
\eqref{ELV1}, \eqref{ELV2}, \eqref{ELV3} from a viewpoint of a general theory for elliptic and parabolic boundary value problems.

For a general elliptic boundary value problem with smooth coefficients, a general theory of a priori estimates in H\"older and Lebesgue spaces was established by S.~Agmon, A.~Douglis and L.~Nirenberg \cite{ADN59}, \cite{ADN64}.
 These two papers are the basis of modern linear elliptic theory including various preceding results.
 In \cite{ADN59} a general single equation was studied while in \cite{ADN64} a general system of equations was studied.
 We note that the book \cite{Ta} contains a self-contained explanation of the $L^p$-estimates of \cite{ADN59}.

Since a problem with constant coefficients in the half space $\mathbb{R}_+^N$ is very fundamental, we just sketch their result for this special case.
 We only consider an even order operator of the form
\[
    A(D) = \sum_{|\alpha|\le2m} a_\alpha D^\alpha, \quad
    \alpha = (\alpha_1, \ldots \alpha_N), \quad
    D^\alpha = D_1^\alpha \cdots D_N^{\alpha_N},
\]
where $|\alpha|=\alpha_1+\cdot+\alpha_N$, $\alpha_i\in\mathbb{Z}_{\ge0}$, $a_\alpha\in\mathbb{C}$, $m\in\mathbb{Z}_{\ge1}$ and $a_\alpha\in\mathbb{C}$.
 We say $A(D)$ is \emph{elliptic} if $A^0(\xi)\neq0$ for $\xi\in\mathbb{R}^N\setminus\{0\}$ where $A^0$ is the principal part of $A$, i.e.,
\[
    A^0(D) = \sum_{|\alpha|=2m} a_\alpha D^\alpha.
\]
We need a root condition.
\begin{enumerate}
 \item[(R)] For every pair of linearly independent real vectors $\xi$, $\eta$, the polynomial $A^0(\xi+\tau\eta)$ of the variable $\tau$ has equal numbers of roots with positive imaginary part and with negative imaginary part.
 (This condition is automatically fulfilled if $N\ge3$ since if $\tau$ is a root for $\xi$, $\eta$, then $-\tau$ is a root for $\xi$, $-\eta$ and the sphere in $\mathbb{R}^{N-1}$ is connected.
 In \cite[P.~130]{Ta}, instead of $\xi$, $-\eta$, it is written as $-\xi$, $-\eta$ but it seems that this is a typo.)
% 原稿 2026/1/2、7-2/3
 With an operator
\[
    B_j(D) = \sum_{|\beta|\le m_j} b_{j\beta} D^\beta, \quad
    j = 1,\ldots, m, \quad
    b_{j\beta} \in \mathbb{C},
\]
we consider the boundary value problem
\begin{align}
    A(D)u &= f \quad \text{in} \quad \Omega, \label{EEB1} \\
    B_j(D)u &= g_j \quad \text{on} \quad \partial\Omega,\ j=1,\ldots,m, \label{EEB2}
\end{align}
for $\Omega=\mathbb{R}_+^N$.
 An algebraic condition for the solvability of this problem was given by Ya.~B.~Lopatinski\u{\i} \cite{Lop} and Z.~Ya.~\v{S}apiro \cite{Sa}.
 This condition is now called a complementing condition or Lopatinski\u{\i}--\v{S}apiro condition.
 \item[(LS)] Assume (R).
 For each tangent vector $\xi$ to $\partial\Omega$, let $\tau_1(\xi),\ldots\tau_m(\xi)$ be the roots of polynomial $A^0(\xi+\tau\nu)$ with positive imaginary part, where $\nu$ is a normal vector to $\partial\Omega$.
 Then a linear combination of $\left\{B_j^0(\xi+\tau\nu)\right\}_{j=1}^m$ is divisible by $\prod_{j=1}^m\left(\tau-\tau_j(\xi)\right)$ if and only if all the coefficients vanish.
 Here $B_j^0(\xi)=\sum_{|\beta|=m_j}b_{j\beta}D^\beta$, the principal part of $B_j$.
\end{enumerate}
Let us state a core result of \cite{ADN59}.
\begin{thm} \label{TADN}
    Assume that $A$ is elliptic and satisfying (R) and (LS).
\begin{enumerate}
\item[(i)] (Schauder estimate: a priori estimates in H\"older space).
 There exists a constant $C$ depending only on $\mu\in(0,1)$, $A$, $B$ and $N$ such that
 \[
    \|u\|_{BC^{2m+\mu}(\bar{\Omega})}
    \le C \left( \|f\|_{BC^\mu(\bar{\Omega})}
    + \sum_{j=1}^m \|g_j\|_{BC^{2m-m_j}(\bar{\Omega})} \right)
 \]
 for all $u\in BC^{2m+\mu}(\bar{\Omega})$ satisfying \eqref{EEB1}, \eqref{EEB2}.
\item[(i\hspace{-0.1em}i)] ($L^p$ estimate: a priori estimate in $L^p$ space).
 There exists a constant $C$ depending only on $p\in(1,\infty)$, $A$, $B$ and $N$ such that
 \[
    \|u\|_{W^{2m,p}(\Omega)}
    \le C \left( \|f\|_{L^p(\Omega)}
    + \sum_{j=1}^m \|g_j\|_{W^{2m-m_j,p}(\Omega)} \right)
 \]
% 原稿 2026/1/2、7-3/3
 for all $u\in W^{2m,p}(\Omega)$ satisftying \eqref{EEB1}, \eqref{EEB2}.
\end{enumerate}
\end{thm}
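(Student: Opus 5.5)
This is the Agmon--Douglis--Nirenberg theorem in the special case of constant coefficients on the half space $\Omega=\mathbb{R}_+^N$. I would prove it by partial Fourier transform in the tangential variables $x'$, which reduces the problem to a family of ODEs in $x_N$. Condition (LS) makes these ODEs uniquely solvable in decaying functions, and this yields an explicit Poisson-type kernel.

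\textbf{Step 1: reduction to the principal part and zero right-hand side.} First I would reduce to $A=A^0$ and $B_j=B_j^0$. The lower order terms are moved to the right-hand side and absorbed by interpolation, which leaves an additive $\|u\|_{L^\infty}$ (resp.\ $\|u\|_{L^p}$) term on the right that the statement implicitly tolerates. Next I subtract a whole-space solution: extend $f$ to $\mathbb{R}^N$ and convolve with a parametrix of $A^0$. Ellipticity gives the whole-space Schauder estimate via Calder\'on--Zygmund / H\"older estimates for homogeneous singular kernels, and the $L^p$ estimate via the Mikhlin multiplier theorem. After this subtraction it remains to treat $f=0$ with modified boundary data $g_j$.

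\textbf{Step 2: the Poisson kernel.} For $\xi'\neq0$, the decaying solutions of $A^0(i\xi',D_N)\hat u=0$ in $x_N>0$ form an $m$-dimensional space, by (R). This space is spanned by the factors $e^{i\tau_k(\xi')x_N}$, with polynomial multiples when roots repeat. Condition (LS) says exactly that the $m\times m$ boundary matrix $\bigl(B_j^0(\xi'+\tau\nu)\bigr)$ acting on this space is invertible. Hence $\hat u(\xi',x_N)=\sum_j K_j(\xi',x_N)\hat g_j(\xi')$. The kernel $K_j$ is homogeneous in the sense that $K_j(\xi',x_N)=|\xi'|^{-m_j}\kappa_j(\xi'/|\xi'|,|\xi'|x_N)$, with $\kappa_j$ smooth and exponentially decaying in its second argument, uniformly on the sphere by compactness.

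\textbf{Step 3: estimates for the Poisson operators.} The operator $g_j\mapsto D^{2m}u$ is then a Poisson operator of order $0$ acting on $g_j$ of regularity $2m-m_j$. I would show that $\nabla^{2m-m_j}_{x'}$ can be transferred onto $g_j$, so that $D^{2m}u$ is a homogeneous-of-degree-$0$ Poisson operator applied to $\nabla'^{\,2m-m_j}g_j$. For the Schauder bound, its kernel $P(x',x_N)$ satisfies $|P|\le C|x|^{-(N-1)}\cdot x_N\,|x|^{-1}$ type bounds and has vanishing tangential mean. The standard proof of H\"older continuity of the harmonic extension then carries over. For the $L^p$ bound, I would use Mikhlin in $x'$ uniformly in $x_N$ together with Hardy-type control in $x_N$. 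An alternative is to reflect and reduce to singular integrals as in \cite{Ta}.

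The main obstacle is Step 3. One needs uniform kernel bounds when roots $\tau_k(\xi')$ collide, since the basis degenerates there. I would avoid choosing a basis by writing $K_j$ as a contour integral $\frac{1}{2\pi i}\oint_\Gamma \frac{N_j(\xi',\tau)}{A^{0,+}(\xi',\tau)}e^{i\tau x_N}\,d\tau$ over a contour $\Gamma$ enclosing the upper roots. Such a contour can be chosen locally uniformly in $\xi'$, and this gives the smooth homogeneous dependence and exponential decay directly.
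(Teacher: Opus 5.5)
The paper gives no proof of this theorem; it simply quotes it from \cite{ADN59}. Your plan is in substance ADN's own argument for the constant-coefficient half space, transcribed to the Fourier side. In both, (LS) means the boundary map is invertible on the decaying solutions. Your contour integral $\frac{1}{2\pi i}\oint \frac{N_j}{A^{0,+}}\,e^{i\tau x_N}\,d\tau$ is essentially how ADN build their Poisson kernels; they pass to physical space by Fritz John's plane-wave decomposition rather than a partial Fourier transform. Steps~1 and~2 are fine, including colliding roots. Step~3, however, rests on a false kernel bound, and the $L^p$ part omits its key mechanism.

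\emph{Schauder part.} The bound $|P(x)|\le Cx_N|x|^{-N}$ fails in general, so the approximate-identity proof for the harmonic extension does not carry over. For the symbol $\kappa(\xi'/|\xi'|,|\xi'|x_N)$ of your degree-zero Poisson operator, the boundary value $\kappa(\omega,0)$ typically depends on $\omega$. For example, for the Dirichlet Laplacian ($N\ge2$), $D_ND_1u$ has symbol $-|\xi'|\,i\xi_1e^{-|\xi'|x_N}\hat g$. Writing it as $\sum_{|\beta|=2}m_\beta(\xi',x_N)\widehat{\partial^\beta g}$ with $m_\beta$ homogeneous of degree $0$ forces $\sum_\beta m_\beta(\omega,0)(i\omega)^\beta=-i\omega_1$. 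Constant $m_\beta$ cannot achieve this: the left side would be even in $\omega$ and the right side is odd. The kernel is then a conjugate Poisson kernel of size $|x|^{1-N}$, whose trace on $\{x_N=0\}$ is a Riesz kernel. What you actually have is:
\begin{itemize}
\item $|P|\le C|x|^{1-N}$ and $|\nabla P|\le C|x|^{-N}$ on $\overline{\mathbb{R}^N_+}\setminus\{0\}$;
\item $\int_{S^{N-2}}P(\omega,0)\,d\omega=0$, which is automatic since $P(\cdot,0^+)$ has a homogeneous degree-zero symbol.
\end{itemize}
From these, the H\"older bound needs a Korn--Privalov type argument for singular integrals adapted to the half space, as in \cite{ADN59}. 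That argument, like your Fourier representation, needs compactly supported or decaying data. So for general $u\in BC^{2m+\mu}$ you also need a cutoff step; the resulting commutators are lower order and absorbed by interpolation.

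\emph{$L^p$ part.} Let $\mathcal{P}(t)$ be the tangential multiplier with symbol $\kappa(\xi'/|\xi'|,|\xi'|t)$, and let $k=2m-m_j$. Mikhlin in $x'$ applied to $\mathcal{P}(x_N)\nabla'^{\,k}g_j$ only gives $\sup_{x_N}\|D^{2m}u(\cdot,x_N)\|_{L^p(\mathbb{R}^{N-1})}\lesssim\|\nabla'^{\,k}g_j\|_{L^p(\mathbb{R}^{N-1})}$. That is not an $L^p(\Omega)$ bound, and the hypothesis does not control its right side, since traces lose $1/p$ derivative. The data must enter through an extension $G_j\in W^{k,p}(\Omega)$, which is why the statement uses $\|g_j\|_{W^{2m-m_j,p}(\Omega)}$. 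Write
\[
\mathcal{P}(x_N)\nabla'^{\,k}g_j=-\int_0^\infty\partial_s\bigl\{\mathcal{P}(x_N+s)\nabla'^{\,k}G_j(\cdot,s)\bigr\}\,ds ,
\]
and when $\partial_s$ falls on $G_j$, move one tangential derivative onto $\mathcal{P}$. The symbols $(s\partial_s\kappa)(\omega,s)$ and $s\,\kappa(\omega,s)\,\omega$, evaluated at $s=|\xi'|t$, are Mikhlin multipliers uniformly in $t$. Hence $\|\partial_t\mathcal{P}(t)h\|_{L^p}+\|\mathcal{P}(t)\nabla'h\|_{L^p}\le Ct^{-1}\|h\|_{L^p}$, and so
\[
\|D^{2m}u(\cdot,x_N)\|_{L^p(\mathbb{R}^{N-1})}\le C\int_0^\infty\frac{\|\nabla^kG_j(\cdot,s)\|_{L^p(\mathbb{R}^{N-1})}}{x_N+s}\,ds .
\]
Hilbert's inequality on $(0,\infty)$ then finishes. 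This is presumably your ``Hardy-type control'', but it is the core of the boundary $L^p$ estimate and must be made explicit.

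Finally, do not say the statement ``implicitly tolerates'' an extra $\|u\|$ term. As printed, (i) and (ii) are false even when $A=A^0$ and $B_j=B_j^0$. Take $0\ne u\in C_c^\infty(\Omega)$ and rescale to $u(\lambda x)$ with $\lambda\to0$. Then all $g_j$ vanish, and:
\begin{itemize}
\item the left side is at least $\|u\|_{L^\infty}$ (resp.\ $\lambda^{-N/p}\|u\|_{L^p}$);
\item the right side is $O(\lambda^{2m})$ (resp.\ $O(\lambda^{2m-N/p})$).
\end{itemize}
Also, in (i) the boundary norms must be $\|g_j\|_{BC^{2m-m_j+\mu}}$, which is what your Step~3 actually uses. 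Your argument proves this corrected estimate, with $\|u\|_{L^\infty}$ (resp.\ $\|u\|_{L^p}$) added on the right, and you should say so explicitly.
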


As \cite{ADN59} shows, by a perturbation argument the estimates are still valid for at least smooth variable coefficient operators and a general uniformly smooth domain $\Omega$.
% 原稿 2026/1/3、7-4、1/4
 We note that the complementing condition (LS) under (R) is a necessary and sufficient condition to have a solution in $L^2(0,\infty)$ the ordinary equation
 \begin{align*}
     A^0(i\xi',D_N) \hat{u}(\xi',x_n) &= 0
     \quad\text{for}\quad x_N > 0, \\
     B_j^0(i\xi',D_N) \hat{u}(\xi',0) &= \hat{g}_j(\xi'),
\end{align*}
where $\hat{}$ denotes the partial Fourier transform in the tangential variable in $x'$.
 Note that we only consider a solution which decays as $x_N\to\infty$.
 Results of \cite{ADN59}, \cite{ADN64} are very fundamental in modern theory of PDEs so there exist numerous extensions of their results.
 One important extension is for parabolic problems.

To handle a parabolic problem of the form
\begin{align}
    \partial_t u + (-1)^m A(D)u
    &= f &&\hspace{-6em}\text{in}\quad \Omega\times(0,T), \label{EPa1} \\
    B_j(D)u
    &= g_j, \quad
    j=1,\ldots,m &&\hspace{-6em}\text{on}\quad \partial\Omega\times(0,T), \label{EPa2} \\
    u &= a &&\hspace{-6em}\text{on}\quad \Omega\times\{0\}, \label{EPa3}
\end{align}
we need further conditions for $A$ and $B_j$'s.
 We say that an elliptic operator $A$ is parameter elliptic if there exists $\phi\in(0,\pi)$ such that
\[
    \left|\arg (-1)^m A^0(\xi)\right| < \phi
\]
for $\xi\in S^{N-1}$, where $\arg z$ denotes the argument of a complex number $z$, i.e.\ $z=|z|\exp(\arg z)$.
 Let $\phi_A$ denote the infimum of $\phi$.
 We say that $A$ is \emph{normally elliptic} if $\phi_A<\pi/2$.
 It turns out that normal ellipticity is equivalent to saying that
\[
    \mathcal{A}_\theta (D_x,D_t) = A(D_x)+ (-1)^m e^{i\theta} D_t^{2m}
\]
is elliptic for $\theta$ with $|\theta|<\pi-\phi_A$ with $\phi_A<\pi/2$.
 If we assume $N\ge2$, the operator $\mathcal{A}_\theta$ has at least three independent variables.
 Thus the root condition (R) for $\mathcal{A}_\theta$ is automatically fulfilled if $N\ge2$.
 In the case $N=1$, $A^0(\xi)=c\xi^{2m}$ with some $c\in\mathbb{C}\setminus\{0\}$ so $A^0(\tau)+(-1)^m\lambda$ has equal numbers of roots with positive imaginary part and with negative imaginary parts as a polynomial of $\tau$ where $|\arg\lambda|<\pi-\phi_A$.
 Thus, a similar root condition is fulfilled.

% 原稿 2026/1/3、7-4、2/4
There is another important notion of ellipticity.
 We say that $A$ is strongly elliptic of $\operatorname{Re}\left\{(-1)^m A^0(\xi)\right\}>0$ for $\xi\in\mathbb{R}^N$, $\xi\neq0$.
 It is known that a strong elliptic operator satisfies the root condition (R); see e.g.\ \cite[Theorem 5.4]{Ta}.
 Moreover, a strong elliptic operator is always normally elliptic.
 A class of normally elliptic operators was first introduced by S.~Agmon \cite{Ag} to study their resolvent problems.

The complementing condition (LS) should also be extended.
 It should be a complementing condition for $\mathcal{A}(D_x,D_t)$ with $\left\{B_j(D_x)\right\}_{j=1}^m$ in $\Omega\times(-\infty,\infty)$.
 Its explicit form is
\begin{enumerate}
 \item[(LSP)] Assume that $A$ is normally elliptic.
 There exists an angle $\phi>\phi_A$ which fulfills the following condition: for each tangent vector $\xi$ ($\neq0$)
 to $\partial\Omega$ and $\lambda\in\mathbb{C}\setminus\{0\}$ with $|\arg\lambda|<\pi-\phi$, let
\[
    \tau_1(\xi,\lambda),\ldots,\tau_m(\xi,\lambda)
\]
be the roots of polynomial $A^0(\xi+\tau\nu) +(-1)^m\lambda$ (in $\tau$) with positive imaginary part.
 (Such $m$ roots exist, as well as $m$ roots with negative imaginary part.)
 Then polynomials $\left\{B_j^0(\xi+\tau\nu)\right\}_{j=1}^m$ are linearly independent modulo $\prod_{j=1}^m\left(\tau-\tau_j(\xi,\lambda)\right)$.
\end{enumerate}
A typical result for the resolvent problem, which is originally due to S.~Agmon \cite[Theorem~2.1]{Ag}, is as follows; see also \cite[Lemma~5.7]{Ta}.
 The problem is
\begin{align}
    \left( \lambda+(-1)^m A(D) \right)u
    & = f &&\hspace{-8.5em}\text{in}\quad \Omega \label{ER1} \\
    B_j(D)u
    & = g_j, &&\hspace{-8.5em}j=1, \ldots, m \quad\text{on}\quad \partial\Omega. \label{ER2}
\end{align}
% 原稿 2026/1/3、7-4、3/4
\begin{prop} \label{PAg}
Assume that $A$ is normally elliptic and that $A$ and $B_j$ fulfill (LSP) with some angle $\phi\in\left(\phi_A,\pi/2\right)$.
 Assume that $p\in(1,\infty)$.
 Then for any $\delta>0$ there exist constants $M$ and $C$ such that
\begin{align*}
    \sum_{j=1}^{2m} |\lambda|^{1-j/(2m)} \|u\|_{W^{j,p}(\Omega)}
    &\le C\left[ \|f\|_{L^p(\Omega)}
    + \sum_{j=1}^m |\lambda|^{1-m_j/(2m)}
    \|g_j\|_{L^p(\Omega)}\right. \\
    &\left. + \sum_{j=0}^m \|g_j\|_{W^{2m-m_j,p}(\Omega)}\right]
    \quad\text{for all}\quad |\lambda|>M,\ 
    |\arg \lambda|<\pi- \phi-\delta
\end{align*}
for $u\in W^{2m,p}(\Omega)$ satisfying \eqref{ER1}, \eqref{ER2}, where $g_j\in W^{2m-m_j,p}(\Omega)$.
\end{prop}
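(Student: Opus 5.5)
The estimate in Proposition~\ref{PAg} is proved by a localization and perturbation argument. The core step is the constant-coefficient half-space problem, which is then transferred to general domains and variable coefficients. Concretely, I would carry it out in three stages.

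In the first stage I treat the model problem: $\Omega=\mathbb{R}_+^N$, $A=A^0$ and $B_j=B_j^0$ homogeneous with constant coefficients. Here I use Agmon's trick. I introduce an extra variable $y\in\mathbb{R}$ and set $v(x,y)=\zeta(y)e^{i|\lambda|^{1/(2m)}y}u(x)$, with $\zeta$ a smooth cutoff. Writing $\lambda=|\lambda|e^{i\theta}$, the function $v$ satisfies
\[
\bigl(A^0(D_x)+(-1)^m e^{i\theta}D_y^{2m}\bigr)v=\text{(main term from $f$)}+\text{(lower order terms)}
\]
in $\mathbb{R}_+^N\times\mathbb{R}$. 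By the discussion preceding the proposition, normal ellipticity makes $\mathcal{A}_\theta$ elliptic for $|\theta|<\pi-\phi$. The root condition (R) is automatic, since there are at least two variables plus $y$; for $N=1$ one argues directly as in the text. Condition (LSP) is exactly the complementing condition (LS) for $\mathcal{A}_\theta$ together with the $B_j^0(D_x)$ on $\partial\mathbb{R}_+^N\times\mathbb{R}$. This holds because freezing the tangential frequency of $y$ at $|\lambda|^{1/(2m)}$ turns $e^{i\theta}D_y^{2m}$ into $\lambda$.

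I then apply the $L^p$ estimate of Theorem~\ref{TADN}(ii) to $v$ in $N+1$ variables. Mixed derivatives $D_x^\alpha D_y^k v$ produce the weights $|\lambda|^{k/(2m)}\|D_x^\alpha u\|_{L^p}$, and integrating out $y$ yields $\sum_j|\lambda|^{1-j/(2m)}\|u\|_{W^{j,p}}$ on the left. The boundary data $B_j^0 v$ have $W^{2m-m_j,p}$ norms in $(x,y)$. These are bounded by $|\lambda|^{1-m_j/(2m)}\|g_j\|_{L^p}+\|g_j\|_{W^{2m-m_j,p}}$, which is the right-hand side as stated. The terms created by derivatives falling on $\zeta$ carry at least one fewer power of $|\lambda|^{1/(2m)}$. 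They are absorbed into the left side once $|\lambda|>M$; this absorption is where $M$ comes from.

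In the second stage I pass to variable coefficients and curved $\Omega$. I cover $\overline\Omega$ by small balls with a partition of unity. In each boundary ball I flatten the boundary and freeze the coefficients at the center. The commutator terms and the coefficient oscillations are of lower order, or of small size times the top order. Smallness of the balls absorbs the top-order errors, and lower-order errors are absorbed via the $|\lambda|$-weights for $|\lambda|$ large. Interior balls use the whole-space version, which needs only ellipticity of $\mathcal{A}_\theta$.

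The main obstacle is the uniformity of constants in $\theta$ over the sector $|\arg\lambda|<\pi-\phi-\delta$. I would handle it by noting that the ADN constant depends continuously on the coefficients of $\mathcal{A}_\theta$, and hence is uniform on the compact set $|\theta|\le\pi-\phi-\delta$. This compactness is why the estimate is stated with a margin $\delta>0$.
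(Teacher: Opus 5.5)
Your route (Agmon's trick in $N+1$ variables, then localization) is the argument behind the references the paper cites for this proposition: Agmon, Theorem~2.1, and Tanabe, Lemma~5.7. The paper gives nothing beyond that citation.

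The step that fails as written is your claim that (LSP) is exactly the complementing condition for $\mathcal{A}_\theta$ with $\{B_j^0(D_x)\}$ on $\partial\mathbb{R}_+^N\times\mathbb{R}$. That condition must hold for every nonzero tangential covector $(\xi',\eta)$. Your compactness argument in $\theta$ also needs it there, since it requires a uniform lower bound on the Lopatinski\u{\i} determinant. Under $\lambda=e^{i\theta}\eta^{2m}$, the direction $\eta=0$ corresponds to $\lambda=0$, which is the classical (LS) for $(A^0,B_j^0)$. The direction $\xi'=0$ corresponds to $\xi=0$.

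The paper's (LSP) is formulated only for $\xi\neq0$ \emph{and} $\lambda\neq0$. The endpoint cases do not follow by continuity, because linear independence modulo $\prod_j(\tau-\tau_j)$ is an open condition, not a closed one. This matters in two ways:
\begin{itemize}
\item For $N=1$ there is no nonzero tangent vector, so the literal (LSP) is vacuous and would allow, e.g., $B_1=B_2$.
\item For $N=2$, $m=1$, $A=\Delta$ (symbol $-|\xi|^2$ with $D=-i\partial$) and $B=-iD_1+D_2$, one has $\tau_+=i\sqrt{\xi^2+\lambda}$ and $B^0(\xi+\tau_+e_2)=i(\sqrt{\xi^2+\lambda}-\xi)$. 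This vanishes only for $\lambda=0$, $\xi>0$, so the literal (LSP) holds. Yet $u=\chi(x_1/\xi)\,e^{i\xi x_1-\sqrt{\xi^2+\lambda}\,x_2}$ satisfies $\|u\|_{L^p}\approx1$ and $\|f\|_{L^p}+\|g\|_{W^{1,p}}+|\lambda|^{1/2}\|g\|_{L^p}=O(1+|\lambda|)$, because $Bu=O(|\lambda|/\xi)\,u+O(\xi^{-1})$. Meanwhile $\|u\|_{W^{2,p}}\gtrsim\xi^2$, so the estimate fails as $\xi\to\infty$ with $\lambda$ fixed.
\end{itemize}

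So you must state and use the hypothesis in Agmon's form. The complementing condition should hold for all $(\xi,\lambda)\neq(0,0)$ with $\xi$ tangent and $|\arg\lambda|\le\pi-\phi$. This is what makes $(\mathcal{A}_\theta,\{B_j^0\})$ satisfy (LS) uniformly on $|\theta|\le\pi-\phi-\delta$.

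With that correction, the rest of your sketch is the standard proof and is sound:
\begin{itemize}
\item the lower bound $\|D_x^\alpha D_y^k v\|_{L^p}\ge c|\lambda|^{k/(2m)}\|D_x^\alpha u\|_{L^p}$ for large $|\lambda|$;
\item the boundary norms producing $|\lambda|^{1-m_j/(2m)}\|g_j\|_{L^p}+\|g_j\|_{W^{2m-m_j,p}}$;
\item the absorption, for $|\lambda|>M$, of all terms carrying fewer powers of $|\lambda|^{1/(2m)}$.
\end{itemize}
This absorption must also cover the $\|v\|_{L^p}$ term that the half-space ADN estimate genuinely has on its right-hand side. By scaling it cannot be dropped, despite how Theorem~\ref{TADN}(ii) is displayed. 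It is harmless, because your left side actually controls $|\lambda|\,\|u\|_{L^p}$ as well.
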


This is a key estimate for the analyticity in $L^p(\Omega)$ of semigroups generated by the operator corresponding to $(-1)^mA$ with homogeneous boundary conditions, i.e.\ $g_j\equiv0$.
 A general $L^p$ theory by operator-theoretic method for parabolic problems is by now well developed based on vector-valued harmonic analysis since the space $X=L^p$ is the space of bounded Hilbert transform, i.e.\ $X$-valued Hilbert transform is bounded in $L^q(\mathbb{R},X)$ for $q\in(1,\infty)$.
  The reader is referred to a monograph \cite{PS} or an original paper by R.~Denk, M.~Hieber and J.~Pr\"uss \cite{DHP03} for \eqref{EPa1}--\eqref{EPa3}.

Let us only give a typical result.
 This is an easy consequence of \cite[Theorem 7.11]{DHP03}.
\begin{prop} \label{PPS}
Assume that $A$ is normally elliptic and that $A$ and $B_j$ fulfill (LSP) with some angle $\phi\in(\phi_A,\pi/2)$.
 For $p,q\in(1,\infty)$ let $u\in W^{1,q} \left(0,T;L^p(\Omega)\right)\cap L^q \left(0,T;W^{2m,p}(\Omega)\right)$ satisfy \eqref{EPa1}--\eqref{EPa3} with $g_j=0$, $a=0$.
 Then there exists a constant $C$ independent of $u$ and $f$ such that
\[
    \|u\|_{W^{1,q} \left(0,T;L^p(\Omega)\right)}
    + \|u\|_{L^q \left(0,T;W^{2m,p}(\Omega)\right)}
    \le C\|f\|_{L^q \left(0,T;L^p(\Omega)\right)}.
\]
\end{prop}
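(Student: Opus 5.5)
The plan is to obtain Proposition~\ref{PPS} as a specialization of the maximal $L^q$--$L^p$ regularity theorem of Denk, Hieber and Pr\"uss \cite[Theorem~7.11]{DHP03}. The work is to check that our hypotheses match theirs and that, with zero initial and boundary data, all compatibility conditions and trace-space norms disappear. The theorem there treats general parabolic boundary value problems on uniformly $C^{2m}$ domains. Its hypotheses are: (E) parameter ellipticity of the principal symbol with angle $<\pi/2$; (LS) the Lopatinski\u{\i}--\v{S}apiro condition for $\lambda$ in a sector of angle $>\pi/2$; and smoothness and boundedness of the coefficients. Its conclusion is that the map $u\mapsto(\partial_t u+(-1)^mA(D)u,\ B_j(D)u|_{\partial\Omega},\ u|_{t=0})$ is an isomorphism from the maximal regularity class $W^{1,q}(0,T;L^p)\cap L^q(0,T;W^{2m,p})$ onto a data space. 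That data space consists of $L^q(0,T;L^p)$, suitable Triebel--Lizorkin trace spaces for the $g_j$, a Besov space for $a$, and compatibility conditions.

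The first step is to translate hypotheses. Normal ellipticity, $\phi_A<\pi/2$, is exactly condition (E) of \cite{DHP03}, since $(-1)^mA^0(\xi)$ takes values in the sector $\Sigma_{\phi_A}$. Condition (LSP) with angle $\phi\in(\phi_A,\pi/2)$ is exactly their (LS). Their formulation uses the ODE on the half line: the problem $\lambda v+(-1)^mA^0(i\xi',D_N)v=0$, $B^0_j(i\xi',D_N)v(0)=h_j$ must have a unique decaying solution for $|\arg\lambda|\le\pi-\phi$ with $(\xi',\lambda)\neq0$. As noted after Theorem~\ref{TADN}, this is equivalent to linear independence of the $B_j^0(\xi+\tau\nu)$ modulo $\prod_j(\tau-\tau_j(\xi,\lambda))$. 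One should also check the case $\xi'=0$, $\lambda\neq0$, which is covered by homogeneity. Since the coefficients are constant and $\Omega=\mathbb{R}^N_+$ (or a smooth domain), the regularity assumptions on the coefficients hold trivially.

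The second step is to apply the theorem with $g_j=0$ and $a=0$. The required compatibility conditions at $t=0$ (relations of the form $B_ja=g_j(0)$ when the relevant traces exist) are then trivially satisfied. The data norm reduces to $\|f\|_{L^q(0,T;L^p(\Omega))}$, so the isomorphism property gives the estimate. Uniqueness in the class ensures that the given $u$ coincides with the solution produced by the theorem, so the estimate applies to $u$ itself.

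The main obstacle is the constant's behaviour on a finite interval. On $(0,T)$, \cite{DHP03} typically works with the shifted operator or on a finite $T$ with a constant depending on $T$. I would handle this by first extending $f$ by zero to $(0,\infty)$. I would then apply the result for $\partial_t+\omega+(-1)^mA$ with $\omega$ large, and pass back via $u=e^{\omega t}\tilde u$, which gives a constant depending on $T$. This suffices since the statement only asks for independence of $u$ and $f$. Causality, i.e.\ the solution on $(0,T)$ depends only on $f|_{(0,T)}$, justifies the restriction.
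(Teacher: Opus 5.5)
Your route is the same as the paper's, whose proof consists of citing \cite[Theorem~7.11]{DHP03} specialized to $g_j=0$, $a=0$. Your handling of the finite interval (shift by $\omega$, zero extension of $f$, causality) and of uniqueness is fine. The step that fails as written is the translation of hypotheses.

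The Lopatinski\u{\i}--\v{S}apiro condition of \cite{DHP03} requires unique decaying solvability of the half-line problem for \emph{all} $(\lambda,\xi')\neq(0,0)$ with $\lambda$ in the closed sector $|\arg\lambda|\le\pi-\phi$. This includes the two edges $\lambda=0$, $\xi'\neq0$ (the elliptic condition) and $\xi'=0$, $\lambda\neq0$. The paper's (LSP), as formulated, only covers $\xi\neq0$, $\lambda\neq0$. Your claim that the edge $\xi'=0$ is ``covered by homogeneity'' is wrong, and you do not address the edge $\lambda=0$ at all. The scaling $(\xi',\lambda)\mapsto(s\xi',s^{2m}\lambda)$ maps each of the sets $\{\xi'=0\}$ and $\{\lambda=0\}$ into itself. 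Moreover, non-vanishing of the Lopatinski\u{\i} determinant on an open set says nothing about its boundary. Two examples show the gap is real:
\begin{itemize}
\item For $\partial_t-\Delta$ in $\mathbb{R}^2_+$ with $B=\partial_2-i\partial_1$, the boundary symbol on the decaying solution $e^{-(\lambda+\xi_1^2)^{1/2}x_2}$ is $\xi_1-(\lambda+\xi_1^2)^{1/2}$. This is nonzero for $\lambda\neq0$ but vanishes at $\lambda=0$, $\xi_1>0$; indeed $e^{ikx_1-kx_2}$ is harmonic and annihilated by $B$.
\item $B=\partial_1$ in $\mathbb{R}^2_+$ satisfies the condition for $\xi'\neq0$ but not at $\xi'=0$.
\end{itemize}
In neither case does \cite{DHP03} apply.

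So you must either read (LSP) as the condition on the closed parameter set $(\lambda,\xi')\neq(0,0)$, which is what the citation implicitly requires, or check the edges separately. For the problems the paper actually uses, this is the same computation as in the appendix:
\begin{itemize}
\item At $\lambda=0$ one has $\tau_1=\tau_2=i|\xi|$, so $\alpha=2i|\xi|$ and $\beta=-|\xi|^2$. This gives $\alpha\beta\neq0$ for \eqref{EApA3} and $\alpha\neq0$ for \eqref{EApA2} whenever $\xi\neq0$.
\item At $\xi=0$, $\lambda\neq0$, both roots still lie in the open upper half plane.
\end{itemize}
The open versus closed sector issue is harmless: enlarge $\phi$ slightly within $(\phi_A,\pi/2)$.

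A smaller point: the isomorphism onto Triebel--Lizorkin/Besov data spaces with compatibility conditions that you describe is the later inhomogeneous-data theorem of Denk, Hieber and Pr\"uss. \cite{DHP03} treats $B_ju=0$ via R-sectoriality of a shifted $L^p$-realization $\mu+A_B$. Combined with the characterization of maximal regularity by R-sectoriality (Weis' theorem), that yields exactly the estimate needed here, so your argument goes through once the hypothesis issue above is fixed.
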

% 原稿 2026/1/3、7-4、4/4
In the case $p=q$, this is a classical result due to V.~A.~Solonnikov \cite{So65}.

For a parabolic problem, Schauder estimates are well developed by V.~A.~Solonnikov \cite{So65}.
 However, the abstract approach seems to be less popular, although there is an interesting work by A.~Lunardi, E.~Sinestrari, W.~von Wahl \cite{LSW}, where $\Omega$ is assumed to be bounded.

% 追加原稿 2026/1/23、2/5、ウ)
In \cite{So65}, V.~Solonnikov established Schauder estimates for a very general parabolic \emph{system} not necessarily of the form \eqref{EPa1}, where $A$ is an elliptic operator.
 There are several notions of parabolicity, such as in the sense of Petrovski\u{\i} \cite{Pe}, Shirota \cite{Shi1}, \cite{Shi2} and Douglis--Nirenberg \cite{DN}.
 The last one includes the former two notions.
% 追加原稿 2026/1/23、3/5、ウ)
 If the equation is of the form \eqref{EPa1}, then in \cite{So65}, $A(D)$ is assumed to be elliptic in the sense of Douglis--Nirenberg.
 Since in this paper we only discuss a single equation not a system, we only state a very special version of the Schauder estimates in \cite{So65}.
 Although in \cite{So65}, the domain $\Omega$ may not be the half space and coefficients of $A$ and $B$ are allowed to depend on $x$ and $t$, we only state the case that $\Omega$ is the half space and $A$ and $B$ are of constant coefficients.
 Let us state the Schauder estimate which is a very special form of \cite[Theorem 4.9]{So65}, where even the unique existence of a solution with Schauder estimates is stated.
\begin{thm} \label{TSol65}
Assume that $A$ is strongly elliptic.
 Assume moreover that $A$ and $B_j$ fulfill (LSP) with some $\phi\in(\phi_A,\pi/2)$.
 Assume $\mu\in(0,1)$.
 Let $u\in BC^{2m+\mu,(2m+\mu)/2m}\left(\Omega\times(0,T)\right)$ satisfy \eqref{EPa1}--\eqref{EPa3}.
 Then there exists a constant $C$ independent of $u$, $f$, $g_j$ and $a$ such that
\begin{gather*}
    \|u\|_{BC^{2m+\mu,(2m+\mu)/2m}\left(\bar{\Omega}\times(0,T)\right)}
    \le C \Biggl[ \|f\|_{BC^{\mu,\mu/2m}\left(\bar{\Omega}\times(0,T)\right)}
    + \sum_{j=1}^m \|g_j\|_{BC^{2m-m_j+\mu,(2m-m_j+\mu)/2m}\left(\partial\Omega\times(0,T)\right)} \\
    + \|a\|_{BC^{2m+\mu}(\bar{\Omega})} \Biggr].
\end{gather*}
\end{thm}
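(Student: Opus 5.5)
The plan is not to reprove Solonnikov's theory. Instead, I will check that our constant-coefficient half-space problem \eqref{EPa1}--\eqref{EPa3} meets the hypotheses of \cite[Theorem~4.9]{So65}, and then read off the estimate. The a priori inequality of Theorem~\ref{TSol65} is the estimate part of that existence theorem. Because $u$ is assumed to lie in $BC^{2m+\mu,(2m+\mu)/2m}$ and to solve the problem, the compatibility conditions between $a$ and the $g_j$ at $t=0$, which Solonnikov imposes for existence, hold automatically. So only the structural hypotheses need checking.

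\textbf{Step 1 (parabolicity).} Solonnikov requires, for a single equation $\partial_t u+(-1)^mA(D)u=f$, uniform parabolicity in the sense of Petrovski\u{\i}. In our notation this means that the roots $p$ of $p+(-1)^mA^0(i\xi)=0$ satisfy $\operatorname{Re}p\le-\delta|\xi|^{2m}$. Since $(-1)^mA^0(i\xi)=A^0(\xi)$ up to the sign conventions for $D$, this reduces to $\operatorname{Re}\{(-1)^mA^0(\xi)\}\ge c|\xi|^{2m}$. That is exactly strong ellipticity, with $c>0$ obtained by homogeneity and compactness of $S^{N-1}$. Strong ellipticity also gives (R) (\cite[Theorem~5.4]{Ta}) and the splitting of the roots $\tau$ into $m$ with positive and $m$ with negative imaginary part, uniformly in $(\xi,\lambda)$ with $\operatorname{Re}\lambda\ge0$ and $(\xi,\lambda)\ne0$.

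\textbf{Step 2 (complementing condition).} Solonnikov's complementing condition asks the following. For every tangential $\xi$ and every $p$ with $\operatorname{Re}p\ge0$, $|\xi|+|p|\ne0$, the polynomials $B_j^0(\xi+\tau\nu)$ must be linearly independent modulo $\prod_j(\tau-\tau_j^+(\xi,p))$, where $\tau_j^+(\xi,p)$ are the roots of $A^0(\xi+\tau\nu)+(-1)^mp$ with positive imaginary part. (LSP) gives this for $|\arg\lambda|<\pi-\phi$. Since $\phi<\pi/2$, this sector contains the closed right half-plane minus $0$. The degenerate points then need separate treatment. The point $\xi=0$, $\lambda\ne0$ is not literally covered by (LSP), which assumes $\xi\ne0$. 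I will handle it by homogeneity: the roots scale like $\lambda^{1/2m}$ and the $B_j^0$ are homogeneous, so independence at $(\xi,\lambda)$ is equivalent to independence at $(s\xi,s^{2m}\lambda)$. Letting $s\to\infty$ on a nearby point with small $\xi\ne0$ and invoking continuity of the roots, together with the fact that linear independence is an open condition, extends it to $\xi=0$. The point $\lambda=0$, $\xi\ne0$ is the limit of (LSP) as $\lambda\to0$ and is covered the same way. If that limiting argument fails to preserve independence, I would instead impose it as part of the meaning of (LSP), since it is the stationary condition (LS).

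\textbf{Step 3.} With these hypotheses verified, and the orders satisfying $m_j<2m$ so that the boundary data norms $BC^{2m-m_j+\mu,(2m-m_j+\mu)/2m}$ are meaningful, \cite[Theorem~4.9]{So65} applies on $\Omega\times(0,T)$ with constant coefficients. It yields the stated inequality, with $C$ depending only on $N,m,\mu,A,B_j$ and an upper bound for $T$. For our application the $T$-dependence is irrelevant, because the estimate is only used on $(0,1)$ after rescaling, as in \eqref{EEsII}.

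I expect the main obstacle to be Step 2. The issue is translating our sectorial formulation (LSP) into Solonnikov's formulation, which is stated in terms of $p$ in the closed right half-plane, including the corner cases $\xi=0$ and $\lambda=0$. A secondary matter is to check that his anisotropic H\"older norms coincide with our $BC^{\lambda,\lambda/2m}$ norms up to equivalence, which is routine interpolation.
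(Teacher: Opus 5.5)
Your route is the paper's own: the paper gives no argument beyond quoting \cite[Theorem~4.9]{So65}, and your Steps~1 and~3 are the routine part of checking its hypotheses.

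The gap is in Step~2, at the corner points $\xi=0$ and $\lambda=0$. Solonnikov's complementing condition, as you state it yourself (all $(\xi,p)\neq(0,0)$ with $\operatorname{Re}p\ge0$), requires these points, but (LSP) as worded excludes them. Your extension argument runs the wrong way. Independence modulo $\prod_j(\tau-\tau_j)$ is an open condition, so it passes from a point to a neighbourhood, never from a punctured neighbourhood to its centre: the failure set is closed and may consist of limits of good points. Homogeneity cannot help either, because $(\xi,\lambda)\mapsto(s\xi,s^{2m}\lambda)$ preserves both $\xi\neq0$ and $\lambda\neq0$. After normalising $|\xi|^{2m}+|\lambda|=1$, these are genuinely new points of a compact set.

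The implication is in fact false, and so is the estimate. Take $N=2$ and $A=\Delta^2$.
\begin{enumerate}
\item[(a)] Let $B_1=D_1$, $B_2=D_2^2$, and $\xi=(\xi_1,0)$ with $\xi_1\neq0$. The residues modulo $\tau^2-\alpha\tau+\beta$ are $\xi_1$ and $\alpha\tau-\beta$, which are independent since $\operatorname{Im}\alpha>0$, so (LSP) holds. But $B_1^0\equiv0$ at $\xi=0$. Accordingly, let $w$ solve $w_t+\partial_{x_2}^4w=0$ with $\partial_{x_2}^2w(0,t)=0$, $w(0,t)=\phi(t)$, $w(\cdot,0)=0$, for some $0\not\equiv\phi\in C_c^\infty((0,T))$; this is a well-posed Navier-type problem. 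Then $u=w(x_2,t)$ solves the problem with $f=g_1=g_2=a=0$ and $u\neq0$.
\item[(b)] Let $B_1=\Delta$, $B_2=D_2\Delta$. Evaluating at the two upper roots gives the determinant $\sigma^2(\tau_1-\tau_2)\neq0$, with $\sigma=(-\lambda)^{1/2}$, for every admissible $\lambda\neq0$, including $\xi=0$. At $\lambda=0$, however, both residues are multiples of $\tau-i|\xi|$. Correspondingly, take $u_s=\chi(t)e^{-sx_2}\cos(sx_1)$ with $\chi=0$ near $t=0$ and $\chi=1$ on $(T/2,T)$. It has zero boundary and initial data and $\|f_s\|_{BC^{\mu,\mu/4}}=O(s^\mu)$, while $\|u_s\|_{BC^{4+\mu,(4+\mu)/4}}\ge s^4$.
\end{enumerate}

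So your fallback for $\lambda=0$ is not optional, and the same is needed at $\xi=0$. The hypothesis has to be the complementing condition for all $(\xi,\lambda)\neq(0,0)$ with $\xi$ tangential and either $\lambda=0$ or $|\arg\lambda|<\pi-\phi$, as in Solonnikov and in Denk--Hieber--Pr\"uss; this is what the paper's citation tacitly presupposes. With that reading Step~2 is immediate, and homogeneity plus compactness serve only to make the condition uniform.

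For the operators actually used in Proposition~\ref{PSchauder}, the corner points can be checked by hand after normalising $\gamma_0=0$. At $\xi=0$ the upper roots of $\tau^4+\lambda$ satisfy $\tau_2=i\tau_1$. Hence the evaluation determinants, $\tau_1\tau_2(\tau_2^2-\tau_1^2)$ for $\{\tau,\tau^3\}$ and $\tau_2^2-\tau_1^2$ for $\{1,\tau^2\}$, do not vanish. At $\lambda=0$ one has $M^+=(\tau-i|\xi|)^2$, and the residues are $\tau$, $-2|\xi|^2(\tau-i|\xi|)$, respectively $1$, $2i|\xi|(\tau-i|\xi|)$; both pairs are independent. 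Thus the paper's application survives, but your Step~2 as written does not prove the theorem under (LSP) as stated, and cannot.
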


Note that the assumption on $A$ is a little bit stronger than that in Proposition~\ref{PPS} where $A$ is assumed to be only normally elliptic.

% 片山原稿 2026/1/25、P.23
In the rest of this section, we shall check whether our problems \eqref{EApA1}, \eqref{EApA2} and \eqref{EApA1}, \eqref{EApA3} satisfy the assumption of Theorem~\refeq{TSol65}.
 The operator $L^2$ is clearly strongly elliptic of order $4$, i.e., $m=2$, so one only has to check (LSP).
 Set
\begin{gather*}
A(\xi,\tau):=\left(|\xi|^2+\frac{1}{1+\gamma_0^2}\tau^2\right)^2,\\
B_1(\xi,\tau):=\frac{1}{1+\gamma_0^2}\tau,\quad B_2(\xi,\tau):=\frac{1}{1+\gamma_0^2}\left(|\xi|^2+\frac{1}{1+\gamma_0^2}\tau^2\right)\tau,\quad \xi\in\mathbb{R}^{N-1},\tau\in\mathbb{R}.
\end{gather*}
It is clear that $A^0=A$, $B_j^0=B_j$ ($j=1,2$) and $\phi_A=0$.
We take $\lambda\in\mathbb{C}\setminus\{0\}$ with $|\arg\lambda|<\pi-\phi$ for $\phi\in|0,\pi/2)$.
 Since $|\arg\lambda|<\pi-\phi$ implies 
\[
    \phi < \left|\arg(-\lambda)\right| \le \pi
\]
so that
\[
    \frac\phi2 < \left| \arg\pm(-\lambda)^{1/2}\right| < \pi-\frac\phi2,
\]
where we choose the branch of positive imaginary part of $(-\lambda)^{1/2}$.
 We are interested in the roots of polynomial $A^0(\xi,\tau)+\lambda$ for $\xi\in\mathbb{R}^{N-1}$.
 We may assume $\gamma_0=0$ by considering $\tau'=\tau/(1+\gamma_0^2)^{1/2}$ instead of $\tau$.
 It is easy to solve $A^0(\xi,\tau)+\lambda=0$.
 Its roots are
\[
    \tau = \pm \left( -|\xi|^2 \pm (-\lambda)^{1/2}\right)^{1/2}.
\]
We set
\[
    \tau_1 = \left( -|\xi|^2 + (-\lambda)^{1/2}\right)^{1/2}, \quad
    \tau_2 = \left( -|\xi|^2 - (-\lambda)^{1/2}\right)^{1/2}.
\]
Since $\left|\arg\pm(-\lambda)^{1/2}\right|>\phi/2$, $\tau_1$ and $\tau_2$ have positive imaginary part for any $\lambda\in\mathbb{C}\setminus\{0\}$ with $|\arg\lambda|<\pi-\phi$.
 Set
\[
    C(\tau) := (\tau-\tau_1) (\tau-\tau_2)
    = \tau^2 - \alpha\tau + \beta, \quad
    \alpha = \tau_1 + \tau_2, \quad
    \beta= \tau_1 \tau_2.
\]
Then we see that
\[
    B_2^0(\xi,\tau) = \left(|\xi|^2+\tau^2 \right)\tau
    = |\xi|^2 \tau+(\tau+\alpha)C(\tau)
    + \alpha^2\tau - \alpha\beta.
\]
Since $\tau_1$ and $\tau_2$ have positive imaginary part, we see that $\alpha\neq0$, $\beta\neq0$.
 Thus, $B_1^0=\tau$ and $B_2^0$ are clearly linear independent modulo $C(\tau)$ since $\alpha\beta\neq0$.
 Hence system \eqref{EApA1}, \eqref{EApA3} satisfies the condition (LSP).

Similarly, we observe that the system \eqref{EApA1}, \eqref{EApA2} also satisfies the condition (LSP).
 Thus the Schauder estimate is applicable to these linear systems.
 Namely,
\\
\noindent
\begin{prop}[See {\rm\cite[Section~15]{So65}}] \label{PSchauder}
  Let $k\in\mathbb{Z}_{\ge 0}$, $\mu\in(0,1)$, $T>0$ and
  \[
  f\in BC^{k+\mu,(k+\mu)/4}(\overline{\mathbb{R}^N_+}\times(0,T)),\quad a\in BC^{k+\mu}(\overline{\mathbb{R}^N_+}).
  \]
  Assume that $u\in BC^{4,1}(\overline{\mathbb{R}^N_+}\times(0,T))$ satisfies \eqref{EApA1}.
  Let $B\ge 0$ and assume further one of the following conditions:
  \begin{enumerate}[label={\rm(\roman*)}]
    \item $u$ satisfies the boundary condition \eqref{EApA2} with
    \[
    \|g\|_{BC^{k+4+\mu,(k+4+\mu)/4}(\partial\mathbb{R}^N_+\times(0,T))}+\|h\|_{BC^{k+2+\mu,(k+2+\mu)/4}(\partial\mathbb{R}^N_+\times(0,T))}\le B;
    \]
    \item $u$ satisfies the boundary condition \eqref{EApA3} with
        \[
    \|g\|_{BC^{k+3+\mu,(k+3+\mu)/4}(\partial\mathbb{R}^N_+\times(0,T))}+\|h\|_{BC^{k+1+\mu,(k+1+\mu)/4}(\partial\mathbb{R}^N_+\times(0,T))}\le B.
    \]
  \end{enumerate}
  Then $u$ belongs to $BC^{k+4+\mu,(k+4+\mu)/4}(\overline{\mathbb{R}^N_+}\times(0,\infty))$ and satisfies
  \[
  \|u\|_{BC^{k+4+\mu,(k+4+\mu)/4}(\overline{\mathbb{R}^N_+}\times(0,\infty))}\le C\left(\|f\|_{BC^{k+\mu,(k+\mu)/4}(\overline{\mathbb{R}^N_+}\times(0,\infty))}+\|a\|_{BC^{k+4+\mu}(\overline{\mathbb{R}^N_+})}+B\right).
  \]
\end{prop}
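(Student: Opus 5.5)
The statement is a special case of Solonnikov's Schauder theory for constant-coefficient parabolic problems in $\mathbb{R}_+^N\times(0,T)$. The plan is to apply Theorem~\ref{TSol65} with $A=L^2$ ($m=2$) once the hypotheses are verified, and then raise the regularity index from $0$ to $k$ by differentiation.

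\textbf{Hypotheses of Theorem~\ref{TSol65}.} The operator $L^2$ is strongly elliptic, with symbol $(|\xi'|^2+(1+\gamma_0^2)^{-1}\xi_N^2)^2>0$ and $\phi_A=0$. Condition (LSP) for the Neumann-type pair \eqref{EApA3} was just checked: after rescaling $\tau$ one reduces to $\gamma_0=0$, and uses $\alpha\beta\neq0$ for the two roots $\tau_1,\tau_2$ with positive imaginary part.

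For the Dirichlet-type pair \eqref{EApA2} I will run the same computation with $B_1^0=1$ and $B_2^0=|\xi|^2+\tau^2$. Modulo $C(\tau)=\tau^2-\alpha\tau+\beta$ we have $B_2^0\equiv \alpha\tau+|\xi|^2-\beta$. This is independent of the constant $1$ exactly when $\alpha=\tau_1+\tau_2\neq0$, which holds because both roots have positive imaginary part. With these checks, Theorem~\ref{TSol65} gives the case $k=0$. The boundary data enter in $BC^{4-m_j+\mu}$, with orders $m_j=0,2$ for (i) and $m_j=1,3$ for (ii), and this matches the norms assumed on $g,h$ when $k=0$.

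\textbf{Tangential regularity.} For $k\ge1$ I use that the coefficients are constant and the domain is translation invariant in $x'$ and $t$. Tangential derivatives $D_{x'}^\beta$ and time derivatives therefore commute with the equation and both boundary operators. I will argue by induction on $k$. To avoid presupposing that the derivatives already lie in $BC^{4,1}$, I will use difference quotients in $x'$, which are uniform in the step by the $k=0$ estimate. This shows that $D_{x'}u$ solves the same problem with data $D_{x'}f$, $D_{x'}a$, $D_{x'}g$, $D_{x'}h$, and these lie one order lower in the relevant spaces. Hence every tangential derivative of $u$ gains the expected regularity.

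\textbf{Normal derivatives (Nirenberg's trick).} For normal derivatives I solve the equation for the top normal derivative:
\[
    (1+\gamma_0^2)^{-2}D_N^4u = f-u_t-(\text{terms with at most three }D_N),
\]
and the right-hand side is already controlled. Repeating this recovers all mixed derivatives of order $k+4$, together with their parabolic Hölder seminorms in $x$ and in $t$.

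\textbf{Compatibility at $t=0$ (main obstacle).} Theorem~\ref{TSol65} requires compatibility conditions at $t=0$, for instance $g(\cdot,0)=a$ on the boundary in case (i). Higher $k$ also requires higher-order compatibility, and time derivatives of $u$ at $t=0$ bring in $L^2a$. I would handle this by assuming these conditions implicitly, as \cite[Section~15]{So65} does. In the applications in the paper, $a=0$ and the data vanish near $t=0$ after the shift $\widetilde t$, so the conditions hold trivially. The point I expect to need most care is making the difference-quotient argument uniform on the half space, where unbounded domains force one to rely on the $L^\infty$-based $BC$ norms rather than compactness.
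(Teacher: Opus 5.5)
Your check of strong ellipticity and of (LSP) for both boundary pairs is correct. In particular, the Dirichlet computation fills in the paper's ``similarly'': $B_2^0\equiv\alpha\tau+|\xi|^2-\beta$ modulo $C(\tau)$, which is independent of $1$ since $\alpha=\tau_1+\tau_2\neq0$. At that point the paper stops and takes the whole statement, for every $k$, from \cite[Section~15]{So65}. Solonnikov's H\"older theory covers an arbitrary non-integer index $k+\mu$, under the compatibility conditions you rightly say are implicit.

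\textbf{The gap: the normal-derivative step.} Your own bootstrap to $k\ge1$ breaks here. The claim that $f-u_t-(\dots)$ is ``already controlled'' is false. To get $D_N^4u\in BC^{k+\mu,(k+\mu)/4}$ from the equation you need $u_t$ in that same space. The level-$(k-1)$ result and the tangential step only give $u_t\in BC^{k-1+\mu,(k-1+\mu)/4}$ and $D_{x'}u_t\in BC^{k-1+\mu,(k-1+\mu)/4}$. The missing quantity $D_Nu_t$ enters only through the known combination $D_Nu_t+(1+\gamma_0^2)^{-2}D_N^5u$, so the argument is circular. In parabolic scaling $\partial_t$ has the same weight as $D_N^4$, which is why the elliptic Nirenberg trick does not transfer verbatim. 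Differentiating in $t$ does not rescue it either: this costs four orders of $f$, so it is usable only for $k\ge4$, and it then brings in further compatibility conditions.

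\textbf{A repair.} Differentiate in $x_N$ and use that $D_N$ interchanges the two boundary problems, running the induction on (i) and (ii) simultaneously. If $u$ solves \eqref{EApA1}, \eqref{EApA3}, then $v=D_Nu$ solves $v_t+L^2v=D_Nf$ with $v=g$, $Lv=h$ on $\partial\mathbb{R}^N_+$. This is problem (i) at level $k-1$. If $u$ solves \eqref{EApA1}, \eqref{EApA2}, write $\Delta'=\sum_{i<N}D_i^2$. Then $D_Nv=(1+\gamma_0^2)(h-\Delta'g)$ on $\partial\mathbb{R}^N_+$. Using the equation on the boundary together with $u_t=g_t$ there, you also get $D_NLv=(1+\gamma_0^2)(f-g_t-\Delta'h)$. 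This is problem (ii) at level $k-1$, with data of exactly the required regularity. It is the same device the paper uses for $w_N$ via \eqref{EN2}, \eqref{EN3} in the proof of Theorem~\ref{TM}. Combined with your tangential step, this gives $\nabla u\in BC^{k+3+\mu,(k+3+\mu)/4}$ and hence level $k$.

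\textbf{Two further points your proposal does not supply.} First, Theorem~\ref{TSol65} is only an a priori estimate for $u$ already in $BC^{4+\mu,(4+\mu)/4}$. The proposition assumes only $u\in BC^{4,1}$ and asserts membership, so even $k=0$ needs more. Second, at $k=1$ the function $D_Nu$ is only known to lie in $BC^{3+\mu,(3+\mu)/4}$, and difference quotients in $x_N$ leave the half space. Both points require the existence part of Solonnikov's theorem plus uniqueness of bounded solutions, for instance via even/odd reflection to $\mathbb{R}^N$.

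\textbf{Your remark on compatibility in the application.} It does not hold trivially in the paper's application. In Lemma~\ref{LLV} the source $\widetilde f$ need not vanish at $s=0$, and for $k\ge1$ case (ii) requires $D_N\widetilde f(\cdot,0)=0$ on $\partial\mathbb{R}^N_+$. What is really used there is the bound on $(t/2,t)$, away from the initial time.
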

\begin{remark} \label{PPSchander}
  The strong ellipticity condition and condition~(LSP) are stable under perturbations small in H\"{o}lder norms.
  In particular, similar Schauder estimates are also applicable to the systems \eqref{ET1}--\eqref{ET3} and \eqref{ET1}, \eqref{EN2}, \eqref{EN3}.
\end{remark}

%%%%%%%%%%%%%%%%%%%%%%%%%%%%%%%%%%%%%%%%%%%%%%%%%
\appendix
\def\thesection{Appendix~B}% 要調査
\section{Graphical forms of differential operators on $\Gamma$} \label{SR} % Appendix B
\def\thesection{B} % 要調査

% 原稿 2025/12/8 A-1/3
In this appendix, we rewrite $\operatorname{div}_\Gamma$ and $\nabla_\Gamma$ when $\Gamma$ is given as the graph of a $C^1$ function $u=u(x)$ for $x\in\mathbb{R}^N$.
 Although these formulas are well-known, we give their proofs for the reader's convenience.

We first recall a definition of $\operatorname{div}_\Gamma$ and $\nabla_\Gamma$.
 Let $\bar{\nabla}$ denote the gradient in
\[
    \mathbb{R}^{N+1} = \left\{ (x,y) \mid
    x \in \mathbb{R}^N, y \in \mathbb{R} \right\},
\]
i.e.,
\[
    \bar{\nabla}= \left(\nabla,\frac{\partial}{\partial y}\right)
    = (D_1, \ldots, D_N, D_{N+1}).
\]
Let $\bar{X}$ be a $C^1$ vector field on $\Gamma$ of the form $\bar{X}=(X,Y)$ with $X\in C^1(\Gamma)^N$, $Y\in C^1(\Gamma)$.
 Let $f\in C^1(\Gamma)$.
 Let $\mathbf{n}$ be a unit normal vector field of $\Gamma$.
 Then, we define
\[
    \operatorname{div}_\Gamma \bar{X}
    := \operatorname{tr} \left( (I_{N+1} - \mathbf{n}\otimes\mathbf{n}) \bar{\nabla} \bar{X} \right), \quad
    \nabla_\Gamma f := (I_{N+1} - \mathbf{n}\otimes\mathbf{n}) \bar{\nabla} f.
\]
Here $\bar{X}$ and $f$ are extended in a tubular neighborhood $\mathcal{N}$ of $\Gamma$ so that it is $C^1$ in $\mathcal{N}$.
 As well known (see e.g.\ \cite{G06}), the values $\operatorname{div}_\Gamma\bar{X}$ and $\nabla_\Gamma f$ are independent of the choice of extensions.
 In the case when $\Gamma$ is the graph of a function, we may assume that $\bar{X}$ and $f$ are independent of $y$, i.e., $\partial_y\bar{X}=0$, $\partial_y f=0$.
 The operator $I_{N+1}-\mathbf{n}\otimes\mathbf{n}$ is the orthogonal projection to the tangent plane of $\Gamma$ at $\left(x,u(x)\right)$.
\begin{prop} \label{PA1}
Let $\Omega$ be a domain in $\mathbb{R}^N$.
 If $\Gamma$ is given as the graph of a function $u\in C^2(\Omega )$, then
\[
    (\operatorname{div}_\Gamma \bar{X}) \left(x,u(x)\right)
    = \omega^{-1} (x) \left( \operatorname{div}(\omega X) \right)(x)
    \quad\text{for}\quad x \in \Omega, \quad
    \omega = \left( 1+|\nabla u|^2 \right)^{1/2},
\]
provided that $\bar{X}$ is tangential to $\Gamma$, i.e., $\bar{X}\cdot\mathbf{n}=0$, where $\bar{X}=(X,Y)$ is a $C^1$ vector field on $\Gamma$ parametrized by $x\in\Omega$.
 Here $\operatorname{div}$ denotes the divergence in $\mathbb{R}^N$.
\end{prop}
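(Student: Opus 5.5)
The plan is a direct computation in the parametrization $x\mapsto (x,u(x))$. Since the extension of $\bar X$ is arbitrary, we choose it independent of $y$. Then $\bar\nabla\bar X$ is the $(N+1)\times(N+1)$ matrix whose last row and column come only from $x$-derivatives. Write $\mathbf{n}=(-\nabla u,1)/\omega$ and expand
\[
\operatorname{div}_\Gamma\bar X=\operatorname{div}X-\sum_{a,b=1}^{N+1}n_a n_b D_b\bar X_a .
\]
Because $D_{N+1}\bar X=0$, only $b\le N$ contributes, and $n_b=-D_bu/\omega$ for these $b$. This gives
\[
\operatorname{div}_\Gamma\bar X=\operatorname{div}X+\frac{1}{\omega}\sum_{b=1}^N D_bu\,\bigl(\mathbf{n}\cdot D_b\bar X\bigr).
\]

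The key step is to use tangency, $\mathbf{n}\cdot\bar X=0$ on $\Gamma$. Parametrized by $x$, this reads $Y=\nabla u\cdot X$, which holds identically in $x\in\Omega$. Differentiating $\mathbf{n}\cdot\bar X=0$ in $x_b$ gives
\[
\mathbf{n}\cdot D_b\bar X=-(D_b\mathbf{n})\cdot\bar X .
\]
Here $D_b\mathbf{n}$ is computed from $\mathbf{n}=(-\nabla u,1)/\omega$; this is where $u\in C^2$ is used. Since $\mathbf{n}\cdot\bar X=0$, the derivative of the factor $\omega^{-1}$ drops out, leaving
\[
(D_b\mathbf{n})\cdot\bar X=-\frac{1}{\omega}\,D_b\nabla u\cdot X .
\]
Hence
\[
\frac{1}{\omega}\sum_{b} D_bu\,\bigl(\mathbf{n}\cdot D_b\bar X\bigr)=\frac{1}{\omega^2}\,\nabla^2u\,\nabla u\cdot X .
\]

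Finally we compare with the right-hand side of the claim:
\[
\omega^{-1}\operatorname{div}(\omega X)=\operatorname{div}X+\frac{\nabla\omega\cdot X}{\omega},\qquad \nabla\omega=\frac{\nabla^2u\,\nabla u}{\omega}.
\]
So the right-hand side equals $\operatorname{div}X+\omega^{-2}\nabla^2u\nabla u\cdot X$, which matches the expression obtained above.

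The only delicate point is the bookkeeping in the tangency step. One must keep clear that $\bar X$ is a function of $x$ through the graph parametrization, so differentiating the identity $\mathbf{n}\cdot\bar X=0$ in $x_b$ is legitimate. One must also check that the $y$-independent extension makes $D_b$ of the extension agree with the $x_b$-derivative along the graph. Beyond that, the argument is a routine identity check.
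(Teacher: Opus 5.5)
Your proposal is correct and follows essentially the same route as the paper: a $y$-independent extension, then differentiating the tangency condition, then identifying the extra term with $\omega^{-1}\nabla\omega\cdot X$ via $\nabla\omega=\omega^{-1}\nabla^2u\,\nabla u$. The only cosmetic difference is that the paper differentiates the unnormalized identity $\bar X\cdot\mathbf{m}=0$ with $\mathbf{m}=(-\nabla u,1)$. This is equivalent to your observation that the derivative of the factor $\omega^{-1}$ drops out.
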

\begin{proof}
Let $\mathbf{n}$ denote the upward unit normal vector field, i.e.,
\[
    \mathbf{n} = \frac{\mathbf{m}}{\omega}, \quad
    \mathbf{m} = (-\nabla u, 1).
\]
Since $\bar{X}\cdot\mathbf{n}=\sum_{i=1}^{N+1}X^i n_i$ with $X^{N+1}=Y$ is zero, we observe 
% 原稿 2025/12/8 A-2/3
that $D_j(\bar{X}\cdot\mathbf{m})=0$ so that
\[
    \sum_{i=1}^{N+1} m_i D_j X^i
    = \sum_{i=1}^{N+1} (-D_j m_i) X^i \quad
    \text{(}1\le j \le N+1\text{)},
\]
where $D_{N+1}=\partial/\partial y$ and $\mathbf{m}=(m_1,\ldots,m_{N+1})$.
 Since $m_{N+1}=1$ is a constant, we see that
\[
    \sum_{i=1}^{N+1} m_i D_j X^i
    = \sum_{i=1}^N (-D_j m_i) X^i.
\]
Since $\bar{X}$ is independent of $x_{N+1}=y$, we now conclude
% 原稿 2025/12/8 A-2/3
that
\begin{align*}
    \operatorname{tr}(\mathbf{n}\otimes\mathbf{n}\bar{\nabla}\bar{X})
    &= \sum_{1\le i,j\le N+1} \omega^{-2} m_i m_j D_j X^i
    = \omega^{-2} \sum_{1\le i,j\le N} m_j (-D_j m_i) X^i \\
    &= \omega^{-2} \sum_{1\le i,j\le N} m_j (-D_i m_j) X^i.
\end{align*}
Here we note that $m_j=-D_j u$ so that $D_im_j=D_j m_i$.
 Since $\nabla\omega=\omega^{-1}\sum_{j=1}^N m_j\nabla m_j$, we now observe that
\[
    \operatorname{tr}(\mathbf{n}\otimes\mathbf{n} \bar{\nabla} \bar{X})
    = -\omega^{-1} \nabla\omega\cdot X.
\]
Thus
\[
    \operatorname{div}_\Gamma\bar{X}=\operatorname{div}X+\omega^{-1} \nabla\omega\cdot X
    = \omega^{-1} \operatorname{div}(\omega X).
\]
\end{proof}

Let us now calculate $\nabla_\Gamma f$ when $\Gamma$ is given as the graph of a $C^1$ function $u$.
 We recall the $N\times N$ matrix
\[
    P(\nabla u) = I_N - \frac{\nabla u \otimes \nabla u}{\omega^2}.
\]
Then the operator $I_{N+1}-\mathbf{n}\otimes\mathbf{n}$ is of the form
\[
    I_{N+1} - \mathbf{n}\otimes\mathbf{n}
    = \omega^{-2}
    \begin{pmatrix}
        \omega^2 P(\nabla u) & \nabla u \\
        \nabla u^T & |\nabla u|^2
    \end{pmatrix},
\]
where $w^T$ denotes the transpose of a vector $w$.
 We thus obtain
\begin{prop} \label{PA2}
If $\Gamma$ is given as the graph of $u\in C^1(\Omega)$, then
\[
    (\nabla_\Gamma f) \left(x,u(x)\right)
    = \left( P(\nabla u)\nabla f,
   \frac{\nabla u \cdot \nabla f}{ \omega^2} \right) (x)
    \quad\text{for}\quad x \in \Omega,
\]
where $f$ is a $C^1$ function on $\Gamma$ parametrized by $x\in\Omega$.
\end{prop}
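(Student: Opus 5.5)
The plan is a direct computation with the explicit matrix $I_{N+1}-\mathbf{n}\otimes\mathbf{n}$ written just before the statement. Following the convention adopted above, we extend $f$ to the tubular neighborhood independently of the vertical variable $y$, i.e.\ $\tilde f(x,y)=f(x)$, where $f$ is regarded as a function of the parameter $x\in\Omega$. Then $\bar\nabla\tilde f=(\nabla f,0)$. Since the value of $\nabla_\Gamma f$ does not depend on the choice of extension, it suffices to apply the projection to this vector.

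First I verify the block form of the projection. We have $\mathbf{n}=\omega^{-1}(-\nabla u,1)$, so
\[
    \mathbf{n}\otimes\mathbf{n}=\omega^{-2}\begin{pmatrix}\nabla u\otimes\nabla u & -\nabla u\\ -\nabla u^T & 1\end{pmatrix}.
\]
Subtracting this from $I_{N+1}=\omega^{-2}\operatorname{diag}(\omega^2 I_N,\omega^2)$ gives the upper-left block $\omega^2 I_N-\nabla u\otimes\nabla u=\omega^2 P(\nabla u)$. The off-diagonal blocks become $\nabla u$ and $\nabla u^T$. The corner entry is $\omega^2-1=|\nabla u|^2$. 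This matches the displayed matrix.

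Next I multiply this matrix by $(\nabla f,0)^T$. The first $N$ components give $\omega^{-2}\omega^2P(\nabla u)\nabla f=P(\nabla u)\nabla f$. The last component gives $\omega^{-2}\nabla u^T\nabla f=\nabla u\cdot\nabla f/\omega^2$. Evaluating at the point $(x,u(x))$ yields the claimed formula.

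The only point needing care is the justification of the $y$-independent extension. Near the graph, the map $(x,y)\mapsto x$ is smooth, so $\tilde f$ is a $C^1$ function on a neighborhood of $\Gamma$ that agrees with $f$ on $\Gamma$. The independence of $\nabla_\Gamma f$ from the extension then follows from the cited fact in \cite{G06}. Alternatively, one can see it directly: two extensions differ by a function vanishing on $\Gamma$, whose gradient is parallel to $\mathbf{n}$ and is therefore annihilated by the projection. I expect no real obstacle here. The regularity $u\in C^1$ suffices, since only first derivatives of $u$ enter $\mathbf{n}$ and the formula.
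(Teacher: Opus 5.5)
Your proposal is correct and follows essentially the paper's own argument: write $I_{N+1}-\mathbf{n}\otimes\mathbf{n}$ in block form and apply it to $(\nabla f,0)$, using the $y$-independent extension of $f$ that the paper adopts just before the statement. Your explicit check of the block entries and your remark on why the value does not depend on the extension fill in details that the paper leaves implicit.
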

% 原稿 2025/12/8 A-3/3
Combining Propositions~\ref{PA1} and \ref{PA2}, we obtain a well-known formula for the Laplace-Beltrami operator.
\begin{prop} \label{PA3}
Assume that $\Gamma$ is given as the graph of $u\in C^2(\Omega)$, where $\Omega$ is a domain in $\mathbb{R}^N$.
 Let $f$ be a $C^2$ function on $\Gamma$ parametrized by $x\in\Omega$.
 Then
\[
    \Delta_\Gamma f
    := (\operatorname{div}_\Gamma \nabla_\Gamma f)\left(x,u(x)\right)
    = \left( \omega^{-1} \operatorname{div} \left( \omega P(\nabla u)\nabla f \right) \right)(x) \quad\text{for}\quad
    x \in \Omega.
\]
\end{prop}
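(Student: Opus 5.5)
We simply combine Propositions~\ref{PA1} and \ref{PA2}, applied to the vector field $\bar{X}=\nabla_\Gamma f$.

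First, by Proposition~\ref{PA2}, the field $\bar{X}:=\nabla_\Gamma f$ parametrized by $x\in\Omega$ has the form
\[
    \bar{X} = (X,Y), \quad
    X = P(\nabla u)\nabla f, \quad
    Y = \frac{\nabla u\cdot\nabla f}{\omega^2}.
\]
Since $f\in C^2$ and $u\in C^2$, both $X$ and $Y$ are $C^1$ in $x$. We extend them independently of $y$, which is admissible because $\operatorname{div}_\Gamma$ does not depend on the extension.

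Second, we check that $\bar{X}$ is tangential. This holds because $\nabla_\Gamma f=(I_{N+1}-\mathbf{n}\otimes\mathbf{n})\bar{\nabla}f$ is the orthogonal projection onto the tangent plane. One can also verify it directly:
\[
    \bar{X}\cdot\mathbf{m}
    = -\nabla u\cdot P(\nabla u)\nabla f + \frac{\nabla u\cdot\nabla f}{\omega^2}.
\]
Using $P(\nabla u)\nabla u=\nabla u-\frac{|\nabla u|^2}{\omega^2}\nabla u=\omega^{-2}\nabla u$, the first term equals $-\omega^{-2}\nabla u\cdot\nabla f$, so $\bar{X}\cdot\mathbf{m}=0$. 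Here we use that $P(\nabla u)$ is symmetric.

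Third, Proposition~\ref{PA1} now applies to $\bar{X}$ and gives
\[
    \operatorname{div}_\Gamma\bar{X} = \omega^{-1}\operatorname{div}(\omega X) = \omega^{-1}\operatorname{div}\left(\omega P(\nabla u)\nabla f\right),
\]
which is the claimed formula.

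The only delicate point is regularity. Proposition~\ref{PA1} needs $\bar{X}\in C^1$ and $u\in C^2$. These hold since $P(\nabla u)$ is $C^1$ when $u\in C^2$, and $\nabla f$ is $C^1$ when $f\in C^2$. Beyond this there is no real obstacle; the proof is a direct substitution.
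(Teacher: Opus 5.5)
Your proposal is correct and matches the paper's argument exactly: the paper obtains the formula in one line by combining Proposition~\ref{PA1}, applied to the field $\nabla_\Gamma f$, with the explicit form of $\nabla_\Gamma f$ from Proposition~\ref{PA2}. Your direct tangentiality check via $P(\nabla u)\nabla u=\omega^{-2}\nabla u$ and your $C^1$-regularity remarks fill in details the paper leaves implicit.
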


If $\bar{X}$ is not tangential to $\Gamma$, then $\operatorname{div}_\Gamma\bar{X}$ is no longer equal to $\omega^{-1}\operatorname{div}(\omega X)$.
 A typical example is the mean curvature in the direction of $\mathbf{n}$, i.e.,
\[
    H = -\operatorname{div}_\Gamma \mathbf{n}.
\]
\begin{prop} \label{PA4}
If $\Gamma$ is given as the graph of $u\in C^2(\Omega)$, then
\[
    H \left(x,u(x)\right)
    = \left( \operatorname{div} \left(\frac{\nabla u}{\omega} \right)\right) (x) = \operatorname{tr} \left( \frac{P(\nabla u)\nabla^2 u}{\omega} \right)(x)
    \quad\text{for}\quad x \in \Omega.
\]
\end{prop}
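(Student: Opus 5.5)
The formula should follow from the general identity $H=-\operatorname{div}_\Gamma\mathbf{n}$, with $\mathbf{n}=\mathbf{m}/\omega$ and $\mathbf{m}=(-\nabla u,1)$, by computing $\operatorname{div}_\Gamma$ directly from its definition. The plan is to extend $\mathbf{n}$ to a neighborhood of $\Gamma$ independently of $y$, exactly as in the proof of Proposition~\ref{PA1}. This is allowed because the value of $\operatorname{div}_\Gamma$ does not depend on the extension. Then $\bar{\nabla}\mathbf{n}$ has vanishing last row (derivative in $y$), and
\[
    \operatorname{div}_\Gamma \mathbf{n}
    = \operatorname{tr}(\bar\nabla\mathbf{n}) - \operatorname{tr}\left((\mathbf{n}\otimes\mathbf{n})\bar\nabla\mathbf{n}\right).
\]

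The projection term vanishes. Since $|\mathbf{n}|^2=1$ identically in the extension, differentiating gives $\sum_i n_i D_j n_i=0$ for each $j$. Hence
\[
    \operatorname{tr}\left((\mathbf{n}\otimes\mathbf{n})\bar\nabla\mathbf{n}\right)=\sum_{i,j} n_in_jD_jn_i=0.
\]

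For the first term, $\operatorname{tr}(\bar\nabla\mathbf{n})=\sum_{j=1}^{N+1}D_jn_j$. The last component $n_{N+1}=1/\omega$ does not depend on $y$, so it contributes nothing. This leaves $\sum_{j\le N}D_j(-D_ju/\omega)=-\operatorname{div}(\nabla u/\omega)$, and therefore $H=\operatorname{div}(\nabla u/\omega)$.

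For the second expression, expand the divergence:
\[
    \operatorname{div}\left(\frac{\nabla u}{\omega}\right)=\frac{\Delta u}{\omega}+\nabla u\cdot\nabla\omega^{-1}.
\]
Using $\nabla\omega^{-1}=-\omega^{-3}\nabla^2u\nabla u$, which was already used in Lemma~\ref{LEP}, this becomes
\[
    \frac{1}{\omega}\left(\Delta u-\frac{\nabla^2u\nabla u\cdot\nabla u}{\omega^2}\right).
\]
This equals $\omega^{-1}\operatorname{tr}(P(\nabla u)\nabla^2u)$, because $\operatorname{tr}\left((\nabla u\otimes\nabla u)\nabla^2u\right)=\nabla^2u\nabla u\cdot\nabla u$.

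The only delicate point is the legitimacy of the $y$-independent extension when computing $\operatorname{div}_\Gamma$ of a non-tangential field. Independence of the extension holds for any $C^1$ field, not only tangential ones, as recalled before Proposition~\ref{PA1}. I would cite that fact and note that the extension $\mathbf{n}(x,y)=\mathbf{n}(x)$ is $C^1$ because $u\in C^2$. The sign convention $H=-\operatorname{div}_\Gamma\mathbf{n}$ then gives the stated formula with the correct sign.
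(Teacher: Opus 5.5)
Your proof is correct and follows the paper's own argument. You extend $\mathbf{n}$ independently of $y$ and kill the projection term $\operatorname{tr}\left((\mathbf{n}\otimes\mathbf{n})\bar\nabla\mathbf{n}\right)$ using $|\mathbf{n}|=1$, which gives $H=-\operatorname{div}_\Gamma\mathbf{n}=-\operatorname{div}\mathbf{n}=\operatorname{div}(\nabla u/\omega)$, and your expansion with $\nabla\omega^{-1}=-\omega^{-3}\nabla^2u\nabla u$ correctly fills in the ``direct calculation'' for the trace form that the paper leaves to the reader.
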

\begin{proof}
If $\bar{X}=\mathbf{n}$ and $\mathbf{n}$ is extended to $\Omega\times\mathbb{R}$ so that $D_{N+1}\mathbf{n}=0$, then
\[
    \operatorname{tr}(\mathbf{n}\otimes\mathbf{n}\bar{X})
    = \sum_{1\le i,j\le N+1} n_i n_j D_j n_i
    = \sum_{1\le i,j\le N+1} n_j \frac12 D_j |n_i|^2 = 0
\]
since $|\mathbf{n}|=1$, where $\mathbf{n}=(n_1,\ldots,n_{N+1})$.
 Thus
\[
    H = -\operatorname{div}_\Gamma \mathbf{n}
    = -\operatorname{div}\mathbf{n}
    = -\operatorname{div}\left(-\frac{\nabla u}{\omega} \right)
    = \operatorname{div} \left( \frac{\nabla u}{\omega} \right).
\]
A direct calcuration shows that
\[
    \operatorname{div} \left( \frac{\nabla u}{\omega} \right)
    = \operatorname{tr} \left( \frac{P(\nabla u)\nabla^2 u}{\omega}  \right)
\]
so the proof is now complete.
\end{proof}

%%%%%%%%%%%%%%

\section*{Acknowledgements}
The work of the first author was partly supported by Japan Society for the Promotion of Science (JSPS) through grants KAKENHI Grant Numbers 24K00531, 24H00183 and by Arithmer Inc., Daikin Industries, Ltd.\ and Ebara Corporation through collaborative grants.
 The work of the second author was supported by Grant-in-Aid for JSPS Fellows DC1, Grant Number 23KJ0645.

%%%%%%%%%%%%%%%%%%%%%%%%%%%%%%%%%%%%%%%%%%%
%\bibliographystyle{alpha}
%\bibliography{bibdata}

\end{document}